\def \c{\mathbb{C}}
\def \z{\mathbb{Z}}
\def \r{\mathbb{R}}
\def \p{\mathbb{P}}
\def \I{\mathcal{I}}
\def \M{\mathcal{M}}
\def \A{\mathbb{A}}
\def \AA{\mathcal{A}}
\def \SS{\mathcal{S}}
\def \PP{\mathcal{P}}
\def \m{\mathfrak{m}}
\def \k{{\bf k}}
\def \K{{\bf K}}
\def \U{\mathcal{U}}
\def \G{{\bf G}}
\def \.{\cdot}
\def \covol{\textup{covol}}
\def \ratmap{\dashrightarrow}
\theoremstyle{plain}
\newtheorem{Th}{Theorem}[section]
\newtheorem{Lem}[Th]{Lemma}
\newtheorem{Prop}[Th]{Proposition}
\newtheorem{Cor}[Th]{Corollary}
\newtheorem{THM}{Theorem}
\newtheorem{COR}{Corollary}
\theoremstyle{definition}
\newtheorem{Def}[Th]{Definition}
\newtheorem{Rem}[Th]{Remark}
\begin{document}
\title{On mixed multiplicities of ideals}
\author{Kiumars Kaveh}
\address{Department of Mathematics, School of Arts and Sciences, University of Pittsburgh, 
301 Thackeray Hall, Pittsburgh, PA  15260, U.S.A.}
\email{kaveh@pitt.edu} 

\thanks{The first author is partially supported by a
Simons Foundation Collaboration Grants for Mathematicians (Grant ID: 210099) and a National Science Foundation
(Grant ID: 1200581).}

\thanks{The second author is partially supported by the Canadian Grant N 156833-12.}

\author{A. G. Khovanskii}
\address{Department of Mathematics, University of Toronto, Toronto, Canada; 
Moscow Independent University; Institute for Systems Analysis, Russian Academy of Sciences}
\email{askold@math.utoronto.ca}


\keywords{multiplicity, local ring, Hilbert-Samuel function, convex body} 
\subjclass[2010]{Primary: 13H15; Secondary: 11H06}

\date{\today}

\maketitle
\begin{abstract}
Let $R$ be the local ring of a point on a variety $X$ over an algebraically closed field $\k$. We make a connection between the notion of mixed 
(Samuel) multiplicity of $\m$-primary ideals in $R$ and intersection theory of subspaces of rational functions on $X$ which deals with the number of solutions of systems of equations. From this we readily deduce several properties of mixed multiplicities. In particular, we prove a (reverse) Alexandrov-Fenchel inequality for mixed multiplicities due to Teissier and Rees-Sharp. 
As an application in convex geometry one obtains a proof of a (reverse) Alexandrov-Fenchel inequality for covolumes of convex bodies inscribed in a convex cone. 
\end{abstract}

\tableofcontents

\section*{Introduction}
Consider the local ring $R = \mathcal{O}_{X, a}$ of a point $a$ on an $n$-dimensional algebraic variety $X$ over an algebraically 
closed field $\k$. The set of elements of $R$ vanishing at $a$ is the unique maximal ideal $\m$. An ideal $I \subset R$ is called $\m$-primary 
if it contains a power of the maximal ideal $\m$, or equivalently if the subvariety it defines around $a$ consists of $a$ itself.
In this paper we make a connection between the notion of (mixed) multiplicity of $\m$-primary ideals in $R$ and the intersection theory of subspaces of 
rational functions on $X$ (dealing with number of solutions of systems of algebraic equations), as developed in \cite{KKh-MMJ, KKh-CMB}. This intersection theory is a birational (and simpler) version of intersection theory of Cartier divisors and linear systems. Each finite dimensional subspace of rational functions is analogue of a Cartier divisor and intersection index of $n$ subspaces is analogue of the intersection number of $n$ divisors. In fact the intersection theory of subspaces of rational functions can be identified with the intersection theory of Cartier b-divisors of Shokurov (see \cite{KKh-CMB}).

We will see that several basic properties of mixed multiplicities of ideals immediately follow from similar/related properties of the intersection index of subspaces of rational functions. In particular we prove the following (reverse) Alexandrov-Fenchel inequality. 

\begin{THM} \label{th-intro-AF}
Let $I_1, I_2, \ldots, I_n$ be $\m$-primary ideals in the local ring $R$. Then:
$$e(I_1, I_1, I_3, \ldots, I_n) e(I_2, I_2, I_3, \ldots, I_n) \geq e(I_1, I_2, I_3, \ldots, I_n)^2,$$
where $e(I_1, \ldots, I_n)$ denotes the mixed (Samuel) multiplicity of the $I_i$.
\end{THM}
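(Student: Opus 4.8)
The plan is to realize the mixed multiplicity as a (signed) intersection index in the intersection theory of subspaces of rational functions / Cartier $b$-divisors of \cite{KKh-MMJ, KKh-CMB}, and then to invoke the Alexandrov--Fenchel inequality available there, carefully tracking a sign that turns it into the reverse inequality.

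First I would set up the dictionary. Shrinking $X$ to an affine neighbourhood of $a$, I pick a common birational model $\pi\colon Y\to X$ on which every $I_j\mathcal{O}_Y=\mathcal{O}_Y(-E_j)$ is invertible, with $E_j$ an effective Cartier divisor supported on the exceptional fibre $\pi^{-1}(a)$. The $b$-divisor attached to $I_j$ --- equivalently, the class of the subspace of $\k(X)$ generated by a generating set of $I_j$ --- is the Cartier closure of $-E_j$, and I would prove the (standard) identity
$$ e(I_1,\dots,I_n)\;=\;(-1)^{n-1}\,(E_1\cdot E_2\cdots E_n)_Y , $$
together with its mixed refinement matching $(-1)^{n-1}(E_1\cdots E_n)_Y$ with the mixed intersection index of the corresponding $b$-divisors. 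Concretely this comes from comparing $\ell(R/I_1^{k_1}\cdots I_n^{k_n})$ with $\chi\bigl(\mathcal{O}_Y/(I_1^{k_1}\cdots I_n^{k_n})\mathcal{O}_Y\bigr)=\chi(\mathcal{O}_Y)-\chi\bigl(\mathcal{O}_Y(-\textstyle\sum_j k_jE_j)\bigr)$, whose leading form in $(k_1,\dots,k_n)$ is $\tfrac{(-1)^n}{n!}\bigl(\sum_j k_jE_j\bigr)^n$; reading off coefficients gives the claim with no Cohen--Macaulay hypothesis on $R$. I expect this translation --- and the bookkeeping of the sign $(-1)^{n-1}$ --- to be the main obstacle: one must check that the birational intersection index genuinely localizes to the fibre over $a$ and computes the Hilbert--Samuel mixed multiplicity, rather than a length of a quotient by generic elements.

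Granting the dictionary, the asserted inequality reads $(E_1^2 E_3\cdots E_n)(E_2^2 E_3\cdots E_n)\ge (E_1 E_2 E_3\cdots E_n)^2$, i.e.\ the Cauchy--Schwarz inequality for the symmetric bilinear form $(A,B)\mapsto (A\cdot B\cdot E_3\cdots E_n)_Y$ restricted to the exceptional divisors of $\pi$. That this form is (negative) semidefinite on the exceptional locus is the Hodge-index/negativity input: for $n=2$ it is the negative-definiteness of the exceptional lattice of a resolution of a surface singularity, and in general it is exactly the Alexandrov--Fenchel inequality for the intersection index of subspaces of rational functions from \cite{KKh-MMJ} (itself deduced, via Newton--Okounkov bodies, from the classical Alexandrov--Fenchel inequality), applied to the anti-effective exceptional $b$-divisors above --- the sign $(-1)^{n-1}$ being precisely what flips ``$\ge$'' for mixed volumes into the reverse inequality for $e$. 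Finally, attaching to each $I_j$, via a valuation of $R$ with one-dimensional leaves centred at $\m$, a co-convex Okounkov body $\Delta_j$ inside the cone generated by the value semigroup, the same argument expresses $e(I_1,\dots,I_n)=n!\,\covol(\Delta_1,\dots,\Delta_n)$ and yields the reverse Alexandrov--Fenchel inequality for mixed covolumes of convex bodies inscribed in a cone.
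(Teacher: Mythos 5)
Your overall strategy---translate the mixed multiplicity into a signed intersection number for Cartier $b$-divisors on a common log-resolution, then invoke a Hodge-type inequality---matches the paper's in spirit (the paper uses the subspaces $I^{(m)}$ and $A^{(m)}$ in the Grothendieck group $\G(X)$ rather than an explicit model $Y\to X$, but the two frameworks are identified in \cite{KKh-CMB}). The translation formula $e(I_1,\dots,I_n)=(-1)^{n-1}(E_1\cdots E_n)_Y$ is correct, and the Euler-characteristic sketch would go through with some care about asymptotic vanishing.

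However, the stated mechanism for the reversal of the inequality is wrong, and this is the one step that actually carries the theorem. You write that ``the sign $(-1)^{n-1}$ [is] precisely what flips $\geq$ for mixed volumes into the reverse inequality for $e$.'' But the sign $(-1)^{n-1}$ enters linearly in each factor, hence quadratically in both sides of the asserted inequality, and so cancels: after substituting your formula, the inequality to be proved is exactly $(E_1^2E_3\cdots E_n)(E_2^2E_3\cdots E_n)\geq(E_1E_2E_3\cdots E_n)^2$, with no residual sign. The reversal does not come from the sign of the dictionary; it comes from the fact that the bilinear form $b(A,B)=(A\cdot B\cdot E_3\cdots E_n)$ is \emph{negative semi-definite} on the relevant subspace. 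Cauchy--Schwarz with $\geq$ holds precisely for semi-definite forms, whereas for nef classes the form is indefinite with signature $(1,*)$, which gives the opposite $\leq$ (ordinary Alexandrov--Fenchel). Attributing the flip to $(-1)^{n-1}$ conceals the actual argument.

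Relatedly, the assertion that the semi-definiteness ``is exactly the Alexandrov--Fenchel inequality for the intersection index of subspaces'' is not quite right and omits the key intermediate step. The Hodge/Alexandrov--Fenchel inequality of \cite{KKh-MMJ} is a signature statement (one positive eigenvalue) on the span of effective classes; it does not by itself say the form is negative semi-definite on the exceptional part. The paper bridges this gap with its orthogonality relations (the elements $I_j^{(m)}/A^{(m)}$ lie in the orthogonal complement of $A^{(m)}$) together with a linear-algebra lemma saying that a form with exactly one positive eigenvalue is negative semi-definite on the orthogonal complement of any positive vector. For $n=2$ your appeal to Mumford's negativity of the exceptional lattice does substitute for this, but for $n>2$ you must first cut down to a surface by generic elements of the subspaces $I_3^{(m)},\dots,I_n^{(m)}$ (the paper's restriction theorem), and then run the orthogonality-plus-signature argument on that surface. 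As written, your proof skips both the restriction step and the orthogonality step, and replaces them with the incorrect sign explanation; those two ingredients need to be supplied for the argument to close.
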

This inequality has been proved in \cite{Teissier1} and \cite{RS} for general Noetherian local rings.

To simplify the presentation assume that X is an irreducible variety. Consider the collection $\K(X)$ of all the 
nonzero finite dimensional $\k$-subspaces of rational functions on $X$. The set $\K(X)$ is equipped with a natural product: for two subspaces $L, M \in \K(X)$, the product $LM$ is the subspace spanned by all the $fg$ where 
$f \in L$ and $g \in M$. With this product $\K(X)$ is a commutative semigroup (without cancellation property). Let $L_1, \ldots, L_n$ be subspaces in $\K(X)$, in \cite{KKh-MMJ} we associate a nonnegative integer $[L_1, \ldots, L_n]$ to 
the subspaces $L_i$ and call it their intersection index. It is defined to be the number of solutions $x$ of a system 
$f_1(x) = \cdots = f_n(x) = 0$ where $f_i \in L_i$ are general elements and $x$ lies in a certain nonempty Zariski open subset $U$ of $X$ (depending on the $L_i$). In \cite{KKh-MMJ, KKh-CMB} it is shown that, the intersection index is well-defined and moreover it is multi-additive with respect to the product of subspaces. It follows that the intersection index extends to a multi-additive integer valued function on the Grothendieck group $\G(X)$ of the semigroup $\K(X)$ (Definition \ref{def-G(X)} and paragraph before it). 

We regard $\G(X)$, together with its intersection index, as an extension of the intersection theory of Cartier divisors on complete varieties.
As mentioned above, the Grothendieck group of the semigroup of subspaces of rational functions 
is naturally isomorphic to the group of Shokurov's Cartier b-divisors.
Naturally, the well-definedness and multi-additivity of the intersection index can be deduced from the usual intersection theory on a product of projective spaces (\cite{KKh-CMB}), when $\k=\c$ it can be proved topologically as well (\cite{KKh-MMJ}).

Let $R$ be a general Noetherian local ring of dimension $n$ with maximal ideal $\m$. In commutative algebra,
one shows that for an $\m$-primary ideal $I$, the Hilbert-Samuel function $H_I(k) = \ell(R / I^k)$ is a polynomial of degree $n$, for sufficiently large values of $k$, where $\ell$ denotes the length of $R$-modules. One then defines the Samuel multiplicity
$e(I)$ of $I$ to be the limit:
$$n!~\lim_{k \to \infty} \frac{H_I(k)}{k^n},$$
that is, $n!$ times the leading coefficient of the Hilbert-Samuel polynomial.
Moreover, if $I_1, \ldots, I_n$ are $\m$-primary ideals, one shows that the function 
$e(I_1^{k_1} \cdots I_n^{k_n})$ is a polynomial in the $k_i$ of degree $n$. Then the Samuel mixed multiplicity $e(I_1, \ldots, I_n)$ is usually defined 
as the coefficient of $k_1 \cdots k_n$ in this polynomial divided by $n!$.

In the present paper, for the local ring $R = \mathcal{O}_{X, a}$ of a point $a$ on a variety $X$, 
we take a more geometric approach to the definition of mixed multiplicity of ideals. We start with the classical notion of the 
multiplicity $e_a(f_1, \ldots, f_n)$ of a system of $n$ algebraic equations $f_1(x) = \cdots = f_n(x) = 0$ which has $a$ as an isolated root (Section \ref{sec-multi-root}). Intuitively $e_a(f_1, \ldots, f_n)$ is the number of roots that are born around $a$ if one 'perturbs' the functions $f_i$. 
Then given $\m$-primary ideals $I_1, \ldots, I_n$ we see that for a 'generic' $n$-tuple 
$(f_1, \ldots, f_n) \in I_1 \times \cdots \times I_n$ the multiplicity $e_a(f_1, \ldots, f_n)$ is the same, and in fact is the minimum multiplicity among all the systems in $I_1 \times \cdots \times I_n$ with isolated root at $a$ (Theorem \ref{th-multi-ideals}).
We define the mixed multiplicity $e(I_1, \ldots, I_n)$ to be the multiplicity of a generic system from 
$I_1 \times \cdots \times I_n$. This is known in commutative algebra in the following form: Let $I_1, \ldots, I_n$ be $\m$-primary ideals in a local domain $R$ and let $x_i \in I_i$ be generic elements, then the Samuel mixed multiplicity $e(I_1, \ldots, I_n)$ is equal to the Samuel multiplicity $e(I)$ where $I = (x_1, \ldots, x_n)$ (see \cite{Teissier2} and also\cite{Rees}).

The above theorem (Theorem \ref{th-multi-ideals}) and definition of mixed multiplicity 
are similar to Theorem \ref{th-int-index-well-def} and Definition \ref{def-int-index}
of the intersection index of subspaces of rational functions (Section \ref{sec-int-index}). The relationship between the multiplicity of a system of $n$ equations at $a$ and the mixed multiplicity of an $n$-tuple $(I_1, \ldots, I_n)$ of $\m$-primary ideals is analogous to the relationship between the number of solutions of a system of $n$ equations on a variety and the intersection index of an $n$-tuple of finite dimensional subspaces of rational functions $(L_1, \ldots, L_n)$.

Without loss of generality we can assume $X$ is an affine variety embedded in an affine space $\A^N$ and the point 
$a$ is the origin $o$. Let $I$ be an $\m$-primary ideal in $R = \mathcal{O}_{X, o}$. 
Let $m>0$ be an integer. To $m$ we associate the subspaces $A^{(m)}$ and $I^{(m)}$ in $R$ respectively consisting of functions which are restrictions of polynomials of degree at most $m$ on $\A^N$, and functions in $I$ which are restrictions of polynomials of degree at most $m$ on $\A^N$. Our main theorem relating the mixed multiplicity of ideals and intersection index is the following (Theorem \ref{Groth-homomorphism}, Theorem \ref{th-multi-int-index} and Corollary \ref{cor-multi-int-index}).

\begin{THM} \label{th-intro-multi-int-index}
Let $I_1, \ldots, I_n$ be $\m$-primary ideals in the local ring $R$. Then for any sufficiently large $m > 0$, and  
any generic system $(f_1, \ldots, f_n) \in I^{(m)}_1 \times \cdots \times I^{(m)}_n$, we have 
$e_o(f_1, \ldots, f_n) = [A^{(m)}, \ldots, A^{(m)}] - [I_1^{(m)}, \ldots, I_n^{(m)}]$. In other words,
$$e(I_1, \ldots, I_n) = [A^{(m)}, \ldots, A^{(m)}] - [I_1^{(m)}, \ldots, I_n^{(m)}].$$
Moreover,
$$e(I_1, \ldots, I_n) = -[I_1^{(m)} / A^{(m)}, \ldots, I_n^{(m)} / A^{(m)}],$$
where the $I_i^{(m)} / A^{(m)}$ are elements of the Grothendieck group $\G(X)$ (analogue of difference of two divisors). {In fact, we show that the map $I \mapsto I^{(m)} / A^{(m)}$ is a homomorphism from the semigroup of 
$\m$-primary ideals in $R$ into the Grothendieck group $\G(X)$ of $\K(X)$ which by above, up to a minus sign, sends the mixed multiplicity to intersection index.} 
\end{THM}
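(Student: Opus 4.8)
The plan is to establish Theorem~\ref{th-intro-multi-int-index} in three stages, corresponding to its three assertions, with the bulk of the work going into the first identity $e(I_1,\ldots,I_n) = [A^{(m)},\ldots,A^{(m)}] - [I_1^{(m)},\ldots,I_n^{(m)}]$ and everything else following by formal manipulation in $\G(X)$. First I would set up the geometry: embed $X$ as an affine subvariety of $\A^N$ with $a = o$, and observe that since each $I_j$ is $\m$-primary, for all sufficiently large $m$ the subspace $I_j^{(m)}$ generates $I_j$ (indeed $I_j^{(m)} \cdot \mathcal{O}_{X,o} = I_j$ because $I_j$ is finitely generated and each generator is a restriction of some polynomial, and adding $A^{(m-d)}$-multiples for large $m$ captures a full set of generators), and likewise $A^{(m)}$ generates the maximal ideal's worth of the coordinate ring; in particular $I_j^{(m)} \supset (\text{some power of the degree-}\le m\text{ polynomials}) $ so a generic $n$-tuple from $I_1^{(m)} \times \cdots \times I_n^{(m)}$ has $o$ as an isolated zero on $X$ with multiplicity $e_o(f_1,\ldots,f_n) = e(I_1,\ldots,I_n)$ by Theorem~\ref{th-multi-ideals}.

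The crux is then a \emph{conservation-of-number} argument relating the local multiplicity at $o$ to global intersection indices. I would choose a generic system $(f_1,\ldots,f_n) \in I_1^{(m)} \times \cdots \times I_n^{(m)}$ and separately a generic system $(g_1,\ldots,g_n) \in A^{(m)} \times \cdots \times A^{(m)}$. By definition of the intersection index (Theorem~\ref{th-int-index-well-def}, Definition~\ref{def-int-index}), $[A^{(m)},\ldots,A^{(m)}]$ counts the total number of solutions of $g_1 = \cdots = g_n = 0$ in the relevant Zariski-open $U \subset X$, and $[I_1^{(m)},\ldots,I_n^{(m)}]$ counts solutions of $f_1 = \cdots = f_n = 0$. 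The key point is that away from $o$ the two systems behave identically: since $I_j^{(m)}$ and $A^{(m)}$ agree after localizing at any point $x \neq o$ (both restrict to ``all degree-$\le m$ functions'', as $I_j = \mathcal{O}_{X,x}$ there), a generic member of $I_j^{(m)}$ and a generic member of $A^{(m)}$ cut out the same thing locally at $x$; so the solutions of the $f$-system outside $o$ are in bijection (counted with multiplicity) with the solutions of the $g$-system outside $o$. Meanwhile the $g$-system, being generic in the space of \emph{all} degree-$\le m$ restrictions, has $o$ as a reduced solution contributing exactly $1$ (for $m$ large, $A^{(m)}$ separates $o$ and its tangent directions), whereas the $f$-system contributes $e_o(f_1,\ldots,f_n) = e(I_1,\ldots,I_n)$ at $o$. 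Subtracting, $[A^{(m)},\ldots,A^{(m)}] - [I_1^{(m)},\ldots,I_n^{(m)}] = 1 \cdot(\text{common count outside }o) + 1 - \bigl( (\text{common count outside }o) + e(I_1,\ldots,I_n)\bigr)$; here I must be a little careful — the cleanest bookkeeping is to note that $A^{(m)} \supseteq I_j^{(m)}$, so using Bernstein–Kushnirenko-style monotonicity of the intersection index one gets $[A^{(m)},\ldots,A^{(m)}] \geq [I_1^{(m)},\ldots,I_n^{(m)}]$ and the deficit is supported entirely at $o$, equal to $e_o(f_1,\ldots,f_n) - 1$... so in fact the correct normalization (matching the statement) is $e_o(f_1,\ldots,f_n)$ once one accounts that the $g$-system has \emph{no} solution at $o$ for $m \gg 0$, since $A^{(m)}$ contains a function nonvanishing at $o$. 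I would make this precise by comparing $[A^{(m)}, \ldots, A^{(m)}]$ with $[A^{(m)} + \k f, \ldots]$ type degenerations and invoking the local-global additivity already present in the cited intersection theory.

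Granting the first identity, the second, $e(I_1,\ldots,I_n) = -[I_1^{(m)}/A^{(m)},\ldots,I_n^{(m)}/A^{(m)}]$, is pure algebra in the Grothendieck group: $I_j^{(m)}/A^{(m)}$ denotes the difference class in $\G(X)$, and by multi-additivity of the extended intersection index one expands $[I_1^{(m)}/A^{(m)},\ldots,I_n^{(m)}/A^{(m)}] = \sum_{S \subseteq \{1,\ldots,n\}} (-1)^{|S|} [\,\ast\,]$ where in each term the factors indexed by $S$ are $I_j^{(m)}$ and the rest are $A^{(m)}$; one then needs that all the \emph{mixed} terms (those with $S \neq \{1,\ldots,n\}$ and $S \neq \emptyset$) together with $[A^{(m)},\ldots,A^{(m)}]$ collapse correctly — this is where I would use that $[B_1,\ldots,B_n]$ with at least one $B_i = A^{(m)}$ and the rest $I_j^{(m)}$ equals $[A^{(m)},\ldots,A^{(m)}]$ for $m \gg 0$, because replacing a generic element of $I_j^{(m)}$ by a generic element of the larger $A^{(m)}$ does not change the count away from $o$ and kills the contribution at $o$ (a generic degree-$\le m$ function is a unit at $o$). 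Hence only the two extreme terms $S = \emptyset$ and $S = \{1,\ldots,n\}$ survive with the right signs, giving $[I_1^{(m)}/A^{(m)},\ldots] = [I_1^{(m)},\ldots,I_n^{(m)}] + (-1)^n\cdot 0 \cdot\ldots$, which after the sign bookkeeping reduces to $[I_1^{(m)},\ldots,I_n^{(m)}] - [A^{(m)},\ldots,A^{(m)}] = -e(I_1,\ldots,I_n)$.

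Finally, the homomorphism claim: I would define $\phi\colon I \mapsto I^{(m)}/A^{(m)} \in \G(X)$ (for $m$ large, and show independence of $m$ by noting $I^{(m+1)}/A^{(m+1)} = I^{(m)}/A^{(m)}$ in $\G(X)$ since multiplying both numerator and denominator by the same $A^{(1)}$-type factor is an equality in the Grothendieck group), and check $\phi(IJ) = \phi(I) + \phi(J)$. The identity to verify is $(IJ)^{(m)}/A^{(m)} = I^{(m)}/A^{(m)} + J^{(m)}/A^{(m)}$, i.e. $(IJ)^{(m)} \cdot A^{(m)} = I^{(m)} \cdot J^{(m)}$ up to the equivalence defining $\G(X)$; for $m$ sufficiently large the product subspace $I^{(m)} J^{(m)}$ generates $IJ$ and sits inside $(IJ)^{(2m)}$ with the two becoming equal in $\G(X)$ — this uses the stabilization of the filtrations $\{I^{(m)}\}$, which is exactly the kind of statement the cited integral-closure/Rees-algebra arguments in \cite{KKh-MMJ, KKh-CMB} provide. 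The compatibility with multiplicities is then immediate from the previous two identities. The main obstacle I anticipate is the conservation-of-number step in the first identity: making rigorous that the ``extra'' intersection points of the $A^{(m)}$-system versus the $I_j^{(m)}$-system are concentrated at $o$ and are counted with the correct local multiplicities requires either a careful excision/localization argument or a clean reduction to the already-established local theory of $e_o$ — everything downstream is formal once that is in hand.
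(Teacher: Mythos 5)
Your plan breaks into the same three stages as the paper's argument and identifies the right structure, but the crucial technical mechanism in stage one is missing, and there is a genuine error in stage two's bookkeeping.

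In stage one, the paper does not set up a ``conservation of number'' between a generic system from $I_1^{(m)}\times\cdots\times I_n^{(m)}$ and a separate generic system from $A^{(m)}\times\cdots\times A^{(m)}$, and it does not try to establish a bijection between the two sets of roots away from $o$ (which is in any case not how intersection counts work — the two systems have different solution loci, only their \emph{cardinalities} agree). Instead it counts a \emph{single} generic system $(f_1,\ldots,f_n)\in I_1^{(m)}\times\cdots\times I_n^{(m)}$ two ways. The ingredient you are missing is Proposition \ref{prop-isolated-roots-no-sol-infinity} together with Theorem \ref{th-system-no-sol-infinity}: because each $I_j^{(m)}$ contains $\PP^{(m)}$, a generic $(f_1,\ldots,f_n)$ can be chosen so that the system has \emph{no roots at infinity on $X$} and its roots away from $o$ are simple. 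Then the total number of roots counted with multiplicity is $\deg(Y_m) = [A^{(m)},\ldots,A^{(m)}]$ (this is the global B\'ezout-type statement), while the roots away from $o$ number exactly $[I_1^{(m)},\ldots,I_n^{(m)}]$ by the very definition of the intersection index, since the base locus of the $I_j^{(m)}$ is $\{o\}$. Subtracting gives $e_o(f_1,\ldots,f_n)$ directly, with no need to compare against a second system or to justify any bijection. You correctly sensed the obstacle; the ``no roots at infinity'' mechanism, not excision, is what closes it.

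In stage two, the assertion that ``only the two extreme terms $S=\emptyset$ and $S=\{1,\ldots,n\}$ survive'' is false. All $2^n$ terms in the multi-additive expansion of $[I_1^{(m)}/A^{(m)},\ldots,I_n^{(m)}/A^{(m)}]$ contribute. What saves the day is the orthogonality relation (Theorem \ref{th-orth-relation}): every mixed term having at least one $A^{(m)}$ slot equals $[A^{(m)},\ldots,A^{(m)}]$, because a generic element of $A^{(m)}$ is nonvanishing at $o$ and the system again has no roots at infinity. Summing those $2^n-1$ terms with their alternating signs gives $\sum_{k=1}^n(-1)^k\binom{n}{k}[A^{(m)},\ldots,A^{(m)}] = -[A^{(m)},\ldots,A^{(m)}]$, not zero for the intermediate terms. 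The final answer $[I_1^{(m)},\ldots,I_n^{(m)}]-[A^{(m)},\ldots,A^{(m)}]$ is the same as yours, but the reason is different and your stated justification would not survive scrutiny. For stage three your intuition is right; the precise statement the paper uses is that $I^{(p)}J^{(q)} = (IJ)^{(p+q)}$ as actual subspaces (not just up to equivalence) for $p,q$ large enough (Lemma \ref{lem-IJ}), which is cleaner than your sketch and immediately yields both well-definedness and the homomorphism property.
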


From this we readily get proofs of the following (Theorem \ref{th-mixed-multi-additive}, Corollary \ref{cor-multi-polynomial} and Theorem \ref{th-Samuel-vs-ours}). 
Also using a version of Hodge inequality for intersection index (Theorem \ref{th-Hodge}) we get a proof of Theorem \ref{th-intro-AF}.
\begin{COR} \label{cor-intro-multi-properties}
\begin{itemize}
\item[(1)] Mixed multiplicity is multi-additive.
That is, if $I_1', I_1'', I_2, \ldots, I_n$ are $\m$-primary ideals then:
$$e(I_1'I_1'', I_2, \ldots, I_n) = e(I_1', I_2, \ldots, I_n) + e(I_1'', I_2, \ldots, I_n).$$
\item[(2)] Multiplicity is a homogeneous polynomial of degree $n$. That is,
if $I_1, \ldots, I_n$ are $\m$-primary ideals in $R$ then the multiplicity function 
$$e(I_1^{k_1} \cdots I_n^{k_n})$$ 
is a homogeneous polynomial of degree $n$ in $k_1, \ldots, k_n$.
\item[(3)] The notion of mixed multiplicity as the minimum multiplicity at $a$ for all the systems of equations from $I_1 \times \cdots \times I_n$ (with isolated root at $a$)
coincides with the notion of multiplicity in the sense of Samuel.
\end{itemize}
\end{COR}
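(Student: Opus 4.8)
The plan is to obtain (1) and (2) as formal consequences of Theorem~\ref{th-intro-multi-int-index}, and (3) by comparing our geometric definition with Samuel's through the case of a single ideal.

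For (1), I would first use Theorem~\ref{th-intro-multi-int-index} to choose one integer $m$ that is simultaneously ``sufficiently large'' for each of the finitely many $\m$-primary ideals $I_1'I_1''$, $I_1'$, $I_1''$, $I_2,\dots,I_n$; the maximum of the finitely many required bounds works. For this $m$ the assignment $\phi_m\colon I\mapsto I^{(m)}/A^{(m)}$ is a homomorphism from the semigroup of $\m$-primary ideals to the Grothendieck group $\G(X)$, so $\phi_m(I_1'I_1'')=\phi_m(I_1')+\phi_m(I_1'')$, the product on the left being in the semigroup of ideals and the sum on the right in $\G(X)$. Combining this with the equality $e(J_1,\dots,J_n)=-[\phi_m(J_1),\dots,\phi_m(J_n)]$ and the multi-additivity of the intersection index on $\G(X)$ (recalled before Definition~\ref{def-G(X)}) gives
$$e(I_1'I_1'',I_2,\dots,I_n)=-[\phi_m(I_1')+\phi_m(I_1''),\phi_m(I_2),\dots,\phi_m(I_n)]=-[\phi_m(I_1'),\dots]-[\phi_m(I_1''),\dots],$$
which is exactly $e(I_1',I_2,\dots,I_n)+e(I_1'',I_2,\dots,I_n)$. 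Note that the intersection index, hence $e$, is symmetric in its arguments (our definition of $e$ via a generic system makes this evident), so (1) in fact holds in every slot.

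For (2), by definition (equivalently by Theorem~\ref{th-multi-ideals}) a single $\m$-primary ideal $J$ satisfies $e(J)=e(J,\dots,J)$ with $n$ equal arguments. Iterating part (1) yields $e(I_i^{k},\star)=k\,e(I_i,\star)$, and then, applying part (1) successively in each of the $n$ slots of $e\bigl(I_1^{k_1}\cdots I_n^{k_n},\dots,I_1^{k_1}\cdots I_n^{k_n}\bigr)$, one obtains
$$e(I_1^{k_1}\cdots I_n^{k_n})=\sum_{1\le j_1,\dots,j_n\le n} k_{j_1}\cdots k_{j_n}\,e(I_{j_1},\dots,I_{j_n}),$$
a homogeneous polynomial of degree $n$ in $k_1,\dots,k_n$ with nonnegative integer coefficients; collecting monomials, the coefficient of $k_1^{a_1}\cdots k_n^{a_n}$ (for $\sum a_i=n$) equals $\binom{n}{a_1,\dots,a_n}$ times the mixed multiplicity of the list in which $I_i$ is repeated $a_i$ times. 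This step involves no choice of $m$: part (1) is a clean identity valid for all $\m$-primary ideals, so the polynomial identity holds for every tuple $(k_1,\dots,k_n)$.

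For (3), recall from the excerpt that Samuel's mixed multiplicity is $1/n!$ times the coefficient of $k_1\cdots k_n$ in the polynomial $(k_1,\dots,k_n)\mapsto e_{\mathrm{Samuel}}(I_1^{k_1}\cdots I_n^{k_n})$, where $e_{\mathrm{Samuel}}(-)$ is the ordinary Samuel multiplicity; by part (2) our mixed multiplicity is likewise $1/n!$ times the coefficient of $k_1\cdots k_n$ in $e(I_1^{k_1}\cdots I_n^{k_n})$. Hence it suffices to prove the single-ideal identity $e(J)=e_{\mathrm{Samuel}}(J)=n!\lim_{k\to\infty}\ell(R/J^k)/k^n$ for every $\m$-primary $J$. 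To see this, let $(g_1,\dots,g_n)$ be a generic element of $J\times\cdots\times J$; since $\k$ is infinite this tuple generates a minimal reduction of $J$, so $e_{\mathrm{Samuel}}((g_1,\dots,g_n))=e_{\mathrm{Samuel}}(J)$, and by Section~\ref{sec-multi-root} the multiplicity $e_o(g_1,\dots,g_n)$ of a system of parameters coincides with the Samuel multiplicity of the ideal it generates. Since $e(J)=e(J,\dots,J)=e_o(g_1,\dots,g_n)$ by our definition and Theorem~\ref{th-multi-ideals}, we conclude $e(J)=e_{\mathrm{Samuel}}(J)$, and the identification of the minimum multiplicity with the generic one is also part of Theorem~\ref{th-multi-ideals}. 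The only genuine obstacle is here: parts (1) and (2) are purely formal once Theorem~\ref{th-intro-multi-int-index} is in hand, whereas matching the number of roots absorbed at $a$ with the Hilbert--Samuel length requires the commutative-algebra input above — and indeed the identification breaks if one replaces $e_{\mathrm{Samuel}}((g_1,\dots,g_n))$ by the colength $\ell(R/(g_1,\dots,g_n))$ when $R$ is not Cohen--Macaulay.
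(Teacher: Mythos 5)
Your proofs of (1) and (2) follow the paper's route exactly: (1) is the observation that $I\mapsto I^{(m)}/A^{(m)}$ is a semigroup homomorphism together with multi-additivity of the intersection index and the identity $e(I_1,\dots,I_n)=-[I_1^{(m)}/A^{(m)},\dots,I_n^{(m)}/A^{(m)}]$, and (2) is the standard polarization argument, which the paper leaves to the reader.

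For (3) you take a genuinely different route. The paper's own proof of Theorem~\ref{th-Samuel-vs-ours} stays inside the intersection-theoretic framework: it uses Proposition~\ref{prop-dim-R/I} and Lemma~\ref{lem-IJ} to identify the Hilbert--Samuel function $\dim_\k(R/I^k)$ with $\dim_\k\bigl((A^{(m)})^k\bigr)-\dim_\k\bigl((I^{(m)})^k\bigr)$, then invokes the Hilbert theorem relating the Hilbert polynomial of a projective variety to its degree (Theorem~\ref{th-Hilbert}) together with Theorem~\ref{th-multi-int-index} to conclude $e_{\mathrm{Samuel}}(I)=[A^{(m)},\dots,A^{(m)}]-[I^{(m)},\dots,I^{(m)}]=e(I)$, and deduces the mixed case by polarization. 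You instead invoke classical reduction theory: over an infinite residue field, generic elements of $J$ generate a (minimal) reduction, and a reduction has the same Samuel multiplicity as $J$ (Northcott--Rees), combined with property~(iii) of Section~\ref{sec-multi-root} identifying $e_o(g_1,\dots,g_n)$ with $e_{\mathrm{Samuel}}\bigl((g_1,\dots,g_n)\bigr)$. Both routes are valid, and the paper itself mentions your route parenthetically in the Introduction (citing Teissier and Rees), but deliberately does not use it in the proof. The trade-off: your argument is shorter and familiar to commutative algebraists, but imports the reduction machinery as an external input; the paper's argument is longer but self-contained within the geometric/intersection-theoretic framework that the paper is trying to promote as a replacement for exactly such algebraic inputs. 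One small point worth making explicit in your version: the genericity in Theorem~\ref{th-multi-ideals} is over the finite-dimensional truncations $I^{(m)}$ rather than over $J$ itself, so you need $m$ large enough that $J^{(m)}\to J/\m J$ is surjective in order to conclude that a generic $n$-tuple in $(J^{(m)})^n$ generates a reduction; this holds for $m$ large since $J^{(m)}$ eventually generates $J$. Your closing remark about Cohen--Macaulayness correctly flags why one must use $e_{\mathrm{Samuel}}$ rather than colength here.
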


To make the relationship/analogy between $\m$-primary ideals and finite dimensional subspaces of rational functions more transparent 
we consider the semigroup of all $\m$-primary ideals in $R$ with product of ideals. Let us identify 
two primary ideals $I, J$ if the restrictions of $I$ and $J$ to each irreducible component $X_i$ coincide. Let 
$\K_{ideal}(R)$ be the semigroup of $\m$-primary ideals modulo this identification of ideals. We denote the Grothendieck group of the semigroup $\K_{ideal}(R)$ by $\G_{ideal}(R)$. The map $I \mapsto I^{(m)} / A^{(m)}$ above (Theorem \ref{th-intro-multi-int-index}) gives a natural homomorphism from $\G_{ideal}(R)$ into $\G(X)$ (Theorem \ref{Groth-homomorphism}). We summarize the relationship/analogy between $\m$-primary ideals and their mixed multiplicities, and finite dimensional subspaces of rational functions and their intersection indices in the following table:\\

\begin{tabular}{|c|c|} 
\hline
$\m$-primary ideals in $R = \mathcal{O}_{X, a}$ & finite dimensional subspaces of $\k(X)$\\
\hline 
$\K_{ideal}(R)$ & $\K(X)$\\
\hline
$\G_{ideal}(R)$ & $\G(X)$\\
\hline
mixed multiplicity & intersection index\\ 
\hline
reverse Alexandrov-Fenchel & Alexandrov-Fenchel\\
inequality & inequality\\
\hline
Bernstein-Kushnirenk theorem & Bernstein-Kushnirenko theorem\\
for mixed multiplicities of monomial ideals & for intersection indices of monomial subspaces\\
\hline
\end{tabular}
\vspace{.5cm} 

Finally we apply the above results to obtain results about covolumes of convex bodies inscribed in a cone. In particular we prove a (reverse) Alexandrov-Fenchel inequality for mixed covolumes which we briefly explain below. We should point out that the connection between covolumes and (mixed) multiplicities of monomial ideals is not new and already well-known to 
B. Teissier (see \cite[Appendix]{Teissier3} which also mentions Alexandrov-Fenchel inequality for covolumes in paragraph before Theorem 8.10).

Let $C$ be a closed strictly convex cone of dimension $n$ with apex at the origin in the Euclidean space $\r^n$ 
(strictly convex cone means it is convex and does not contain any line through the origin).
We call a subset $\Gamma \subset C$ a $C$-convex region, if $\Gamma$ is closed and 
convex and moreover for any $x \in \Gamma$ and $y \in C$ we have $x+y \in \Gamma$. 
We call a $C$-convex region cobounded if the completment $C \setminus \Gamma$ is bounded. 
We call the volume of $C \setminus \Gamma$ the {\it covolume} of the convex region $\Gamma$ and 
denote it by $\covol(\Gamma)$. Corollary \ref{cor-intro-multi-properties} then implies 
(Theorem \ref{th-covol-poly}):

\begin{COR} \label{cor-intro-covol-poly}
The covolume of cobounded convex regions is a homogeneous polynomial of degree $n$. More precisely, 
let $\Gamma_1, \ldots, \Gamma_n$ be $C$-convex regions. Then the function:
$$P(\lambda_1, \ldots, \lambda_n) = \covol(\lambda_1 \Gamma_1 + \cdots + \lambda_n\Gamma_n),$$ is 
a homogeneous polynomial of degree $n$ in the $\lambda_i \geq 0$.
\end{COR}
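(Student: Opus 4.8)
The plan is to reduce the statement about covolumes of $C$-convex regions to the polynomiality of mixed multiplicities of monomial $\m$-primary ideals, which is already established in Corollary \ref{cor-intro-multi-properties}(2). First I would set up the dictionary between convex regions and ideals: choose a lattice $\Z^n \subset \r^n$ so that the cone $C$ is rational (after an initial rational approximation argument, which we treat at the end), and let $R = \k[[x_1, \ldots, x_n]]$ or the local ring at the origin of the affine toric variety associated with $C$. To a rational cobounded $C$-convex region $\Gamma$ whose vertices lie in $\Z^n$ one associates the monomial ideal $I_\Gamma$ generated by the monomials $x^\alpha$ with $\alpha \in \Gamma \cap \Z^n$; coboundedness of $\Gamma$ is exactly the condition that $I_\Gamma$ is $\m$-primary. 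Under this correspondence, Minkowski sum of regions goes to product of ideals, $\Gamma_1 + \Gamma_2 \mapsto I_{\Gamma_1} I_{\Gamma_2}$, and the dilation $\lambda \Gamma \mapsto I_\Gamma^\lambda$ for integer $\lambda$.

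The second step is the volume–multiplicity identity: for a single rational cobounded $C$-convex region $\Gamma$ one has $e(I_\Gamma) = n! \covol(\Gamma)$. This is the Bernstein–Kushnirenko theorem for monomial ideals alluded to in the table, and I would either invoke it directly from the toric picture or deduce it from Theorem \ref{th-intro-multi-int-index} by computing $[A^{(m)}, \ldots, A^{(m)}] - [I_\Gamma^{(m)}, \ldots, I_\Gamma^{(m)}]$ as a difference of lattice-point counts, which in the limit gives the difference of the volume of the full simplex and the volume of the truncated region, i.e. the covolume. Granting this, the function
$$P(\lambda_1, \ldots, \lambda_n) = \covol(\lambda_1 \Gamma_1 + \cdots + \lambda_n \Gamma_n) = \frac{1}{n!} e\bigl((I_{\Gamma_1}^{\lambda_1} \cdots I_{\Gamma_n}^{\lambda_n})\bigr)$$
agrees, for all nonnegative integer tuples $(\lambda_1, \ldots, \lambda_n)$ with rational $\Gamma_i$, with a homogeneous polynomial of degree $n$ by Corollary \ref{cor-intro-multi-properties}(2). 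A function on $\Z_{\geq 0}^n$ that equals a polynomial at all integer points extends uniquely to that polynomial on $\r_{\geq 0}^n$.

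The third and final step is to remove the rationality hypotheses by a continuity/density argument. The map $\Gamma \mapsto \covol(\Gamma)$ is continuous on cobounded $C$-convex regions with respect to the Hausdorff metric on the (bounded) complements $C \setminus \Gamma$, and rational regions with integral vertices are dense among all cobounded $C$-convex regions, as are rational cones among strictly convex cones; since the $\lambda_i$-scaled Minkowski sum depends continuously on the data and the class of degree-$n$ homogeneous polynomials is closed under pointwise limits, $P$ remains such a polynomial in general. I expect the main obstacle to be precisely this last approximation step: one must check that a $C$-convex region can be approximated from outside and inside by rational cobounded $C$-convex regions while keeping the cone $C$ fixed (or else approximate $C$ too and control the resulting change in covolume), and that the relevant mixed covolumes — a priori defined only through the polynomial $P$ — vary continuously. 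All the algebraic input is already packaged in Corollary \ref{cor-intro-multi-properties}, so the real work is combinatorial-geometric bookkeeping rather than new commutative algebra.
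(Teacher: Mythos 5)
Your proposal follows essentially the same route as the paper's proof of Theorem~\ref{th-covol-poly}: set up the dictionary between rational polyhedral cobounded $C$-convex regions and $\m$-primary monomial ideals in the local ring of the toric variety, invoke the local Bernstein--Kushnirenko theorem (Theorem~\ref{th-BK-local}) to transport Corollary~\ref{cor-multi-polynomial} to covolumes, and finish by approximating the cone $C$ and the regions $\Gamma_i$ by rational polyhedral data and passing to the limit of homogeneous degree-$n$ polynomials. The approximation subtlety you flag is real, and the paper handles it exactly as you suggest at the end, by approximating the cone $C$ itself along with the regions.
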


Imitating the definition of mixed volume of convex bodies, one can then define mixed covolume of convex regions: 
let $\mathcal{C}$ denote the collection of all cobounded $C$-convex regions. Then there exists a unique function $V: \mathcal{C}^n \to \r$ such that: (1) $V$ is linear in each argument, (2) for any cobounded 
$C$-convex region $\Gamma$ we have $V(\Gamma, \ldots, \Gamma) = \covol(\Gamma)$. For $C$-convex regions 
$\Gamma_1, \ldots, \Gamma_n$ we call $V(\Gamma_1, \ldots, \Gamma_n)$ the mixed covolume of 
the $\Gamma_i$. 

The local ring $R$ of the unique fixed point of an $n$-dimensional affine toric variety is (the localization of) a monomial algebra and to it 
one associates an $n$-dimensional rational cone $C_R \in \r^n$. With an $\m$-primary monomial ideal $I \subset R$ one can associate the convex hull 
$\Gamma_I$ of all the exponents of monomials in $I$. The set $\Gamma_I$ is a $C_R$-convex region (Section \ref{sec-convex-geo}). 
According to the local 
Bernstein-Kushnirenko theorem (Theorem \ref{th-BK-local}) one has the following formula for the mixed multiplicity of $\m$-primary monomial ideals: Let $I_1, \ldots, I_n$ be 
$\m$-primary ideals in $R$ then $$e(I_1, \ldots, I_n) = n!V(\Gamma_{I_1}, \ldots, \Gamma_{I_n}).$$ 
From Theorem \ref{th-intro-AF} and the local Bernstein-Kushnirenko theorem we obtain the following (Theorem \ref{th-AF-covolume}).

\begin{COR}[Alexandrov-Fenchel inequality for covolumes] \label{cor-intro-AF-convex}
Let $\Gamma_1, \ldots, \Gamma_n$ be $C$-convex regions. Then the following inequality holds between the 
mixed covolumes:
$$V(\Gamma_1, \Gamma_1, \Gamma_3, \ldots, \Gamma_n) 
V(\Gamma_2, \Gamma_2, \Gamma_3, \ldots, \Gamma_n) \geq V(\Gamma_1, \Gamma_2, \ldots, \Gamma_n)^2.$$
\end{COR}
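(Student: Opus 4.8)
The plan is to derive Corollary~\ref{cor-intro-AF-convex} directly from Theorem~\ref{th-intro-AF} (the reverse Alexandrov--Fenchel inequality for mixed multiplicities) via the local Bernstein--Kushnirenko theorem (Theorem~\ref{th-BK-local}). The key observation is that the convex-geometric statement is, after the correct dictionary, \emph{literally} the same inequality. So the proof is essentially a translation argument, with one genuine subtlety: one must produce, for arbitrary $C$-convex regions $\Gamma_i$, a monomial algebra $R$ and $\m$-primary monomial ideals $I_i$ realizing them.

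First I would reduce to the case where $C$ is a rational simplicial cone and each $\Gamma_i$ has vertices at lattice points. Both sides of the claimed inequality are continuous in the $\Gamma_i$ (with respect to, say, Hausdorff distance on the bounded complements $C\setminus\Gamma_i$), and mixed covolume is multilinear by the defining properties of $V$ stated in the excerpt, exactly as for mixed volume; moreover scaling $\Gamma_i\mapsto\lambda\Gamma_i$ scales the inequality homogeneously. Hence it suffices to prove the inequality for a dense, scaling-and-limit-generating family of data. Concretely: approximate $C$ from inside by rational simplicial subcones, approximate each $\Gamma_i$ by a $C$-convex region with rational (hence, after clearing denominators, integral) vertices, and note that any $C'$-convex region is also $C$-convex when $C'\subset C$, so the approximations remain admissible. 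This is the standard reduction used to pass from the combinatorial Bernstein--Kushnirenko setting to general convex bodies, here adapted to cones and covolumes.

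Next, in the rational/integral situation, let $R$ be the local ring at the torus-fixed point of the affine toric variety attached to the cone $C$ (or a chosen rational simplicial $C$), so that $R$ is a localized monomial algebra with associated cone $C_R = C$ as recalled in the excerpt (Section~\ref{sec-convex-geo}). For each $i$, let $I_i\subset R$ be the monomial ideal generated by the monomials whose exponents lie in $\Gamma_i\cap\z^n$; since $\Gamma_i$ is a cobounded $C$-convex region, $I_i$ is $\m$-primary and $\Gamma_{I_i}=\Gamma_i$. Now apply Theorem~\ref{th-intro-AF} to $I_1,\dots,I_n$ and substitute, in each of the three mixed multiplicities, the formula $e(J_1,\dots,J_n)=n!\,V(\Gamma_{J_1},\dots,\Gamma_{J_n})$ from the local Bernstein--Kushnirenko theorem. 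The common factor $(n!)^2$ on both sides cancels, and what remains is exactly
$$V(\Gamma_1, \Gamma_1, \Gamma_3, \ldots, \Gamma_n) V(\Gamma_2, \Gamma_2, \Gamma_3, \ldots, \Gamma_n) \geq V(\Gamma_1, \Gamma_2, \ldots, \Gamma_n)^2,$$
proving the inequality for integral data, and hence in general by the density/continuity reduction of the previous paragraph.

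The main obstacle I anticipate is not the algebraic inequality itself — that is handed to us by Theorem~\ref{th-intro-AF} — but verifying the continuity of mixed covolume rigorously enough to justify the limiting argument, together with checking that the approximating regions can simultaneously be made $C$-admissible and lattice-rational. One has to be careful that shrinking the cone does not destroy coboundedness of the $\Gamma_i$ and that the Hausdorff-type convergence of $C\setminus\Gamma_i$ really forces convergence of all the mixed covolumes $V(\Gamma_{i_1},\dots,\Gamma_{i_n})$; the cleanest route is to first establish, as in Corollary~\ref{cor-intro-covol-poly}, that $\covol$ is polynomial in the scaling parameters and then recover $V$ by polarization, so that continuity of $V$ follows from continuity of $\covol$, which in turn is transparent from its description as a volume of an explicit bounded region. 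With that in hand the rest is bookkeeping.
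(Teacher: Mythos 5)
Your proposal is correct and follows essentially the same route as the paper's proof of Theorem~\ref{th-AF-covolume}: reduce to rational polyhedral data, translate via the local Bernstein--Kushnirenko theorem to mixed multiplicities of monomial $\m$-primary ideals, apply the reverse Alexandrov--Fenchel inequality for multiplicities, and extend to general $C$-convex regions by multilinearity/rescaling plus approximation and continuity. The paper's argument is terser but identical in substance.
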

Corollary \ref{cor-intro-AF-convex} can also be proved in a purely convex geometric way using the classical Alexandrov-Fenchel inequality for mixed volumes of convex bodies (see \cite{Askold-Vladlen}, also for background material on the classical Alexandrov-Fenchel inequality see \cite{BZ}).

To make the paper accessible to a wider range of audience throughout the paper we have tried to recall most of the background material. 

\section{Intersection theory of subspaces of rational functions} \label{sec-int-index}
In \cite{KKh-MMJ, KKh-CMB} the authors develop an intersection theory for finite dimensional subspaces of rational functions.
In this section we recall basic facts from this theory. It is closely related to the intersection theory of Cartier divisors and linear systems (and more precisely Cartier b-divisors of Shokurov). 


We should point out that there is a slight logical imprecision in our presentation of the main results concerning the intersection index in the paper \cite{KKh-MMJ}. There the intersection index is defined only for an irreducible variety, but in the restriction theorem 
(see Theorem \ref{th-int-index-rest} below) we talk about the intersection index on a subvariety $X_{\bf f}$ which could possibly be reducible. Nevertheless this can be easily resolved. One only has to slightly adjust definitions and statements to hold for reducible varieties as well. The same arguments can be used or repeated almost word by word (as an example see the proof of Theorem \ref{th-int-index-well-def} below in which we have included the boring details).

In what follows $X$ is a possibly reducible algebraic variety over an algebraically closed field $\k$, such that all its irreducible components $X_i$  have the same dimension $n$. Let $\k(X)$ denote the algebra of rational functions on $X$. {One shows that $f \mapsto (f_{|X_1}, \ldots, f_{|X_r})$ gives a $\k$-algebra 
isomorphism between $\k(X)$ and $\bigoplus_{i=1}^r \k(X_i)$.}

The collection of all finite dimensional subspaces of rational functions $\k(X)$ has a product. 
For two finite dimensional subspaces $L, M$ we define 
$LM$ to be the subspace spanned (over $\k$) by all the products $fg$ where $f \in L$ and $g \in M$. 

\begin{Def} \label{def-K(X)} 
{If $X$ is irreducible we let $\K(X)$ to be the collection of all nonzero finite dimensional subspaces of the field of 
rational functions $\k(X)$. When $X$ is not irreducible we define $\K(X)$ to be the collection of all finite dimensional subspaces of the algebra of rational functions $\k(X)$ whose restriction to each irreducible component $X_i$ is nonzero, modulo the following relation: we identify two subspaces $L, M$ if $L_{|X_i} = M_{|X_i}$ for every $i$.
In other words, $\K(X)$ is the direct sum $\bigoplus_{i=1}^r \K(X_i)$. Clearly $\K(X)$ is a commutative semigroup with respect to product of subspaces.}
\end{Def}

\begin{Rem} \label{rem-L-vs-sum-Li}
{Suppose that $L$ is a finite dimensional subspace of the algebra $\k(X)$ such that 
$L_{|X_i}$ is nonzero for any $i$. Then by definition of $\K(X)$, $L$ and $\bigoplus_{i=1}^r L_{|X_i}$ represent the same element in the semigroup $\K(X)$ (it is not difficult to find an example of a subspace $L \subset \k(X)$ such that $L \neq \bigoplus_{i=1}^r L_{|X_i}$).
In the rest of the paper by abuse of notation we may write $L \in \K(X)$ to mean the element in $\K(X)$ represented by $L$ i.e. $\bigoplus_{i=1}^r L_{|X_i}$.}
\end{Rem}

We say that a hypersurface $H \subset X$ is a {\it pole} for $L \in \K(X)$ if it is a pole of some function in $L$. 
Clearly the union of poles of $L$ is a subvariety of $X$ of smaller dimension (possibly empty). The {\it base locus} 
of $L \in \K(X)$ is the collection of all points at which all the functions from $L$ vanish. It is also a subvariety of smaller dimension 
(possibly empty). 

Let ${\bf L} = (L_1, \ldots, L_n)$ be an $n$-tuple of elements from $\K(X)$. let $\Sigma_{\bf L}$
denote the union of all the poles and base loci of the $L_i$ as well as the singular locus of $X$. It is a subvariety in $X$ of smaller dimension (possibly empty). Let $\Sigma \subset X$ be any subvariety 
containing $\Sigma_{\bf L}$ and such that $\dim(\Sigma) < n$.

\begin{Th} \label{th-int-index-well-def}
There is a nonempty Zariski open set ${\bf U} \subset L_1 \times \cdots \times L_n$ 
such that for any ${\bf f} = (f_1, \ldots , f_n) \in {\bf U}$ the following holds: The system
\begin{equation} \label{equ-system-Sigma}
f_1(x) = \cdots = f_n(x) = 0, \quad x \in X \setminus \Sigma
\end{equation}
has finitely many roots, all of them are simple and their number is independent of
the choices of $\Sigma$ and ${\bf f} \in {\bf U}$.
\end{Th}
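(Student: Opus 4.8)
The plan is to reduce the statement to a finite, well-understood situation by a sequence of standard geometric manipulations, and then extract the "genericity" from the constructibility of the relevant conditions. Throughout I would work component by component: since $\k(X) \cong \bigoplus_{i=1}^r \k(X_i)$ and, by Definition \ref{def-K(X)}, each $L_j$ is the direct sum of its restrictions $L_{j|X_i}$, the system \eqref{equ-system-Sigma} on $X \setminus \Sigma$ decomposes into the disjoint union of the corresponding systems on the smooth irreducible varieties $X_i \setminus \Sigma$. Hence it suffices to prove the theorem when $X$ is irreducible (and, after removing $\Sigma$, smooth and quasi-affine), and then sum the contributions; the independence of $\Sigma$ for the reducible case follows from the independence for each component.

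\medskip

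So assume $X$ is irreducible of dimension $n$ and fix a subvariety $\Sigma \supset \Sigma_{\bf L}$ with $\dim \Sigma < n$. On $Y := X \setminus \Sigma$ every $f \in L_j$ restricts to a \emph{regular} function (no poles), and the $f_j$ have no common zero coming from the base locus. First I would choose a basis $f_j^{(1)}, \ldots, f_j^{(d_j)}$ of each $L_j$ and view the generic element of $L_j$ as $\sum_k c_j^{(k)} f_j^{(k)}$ with parameters $c_j^{(k)}$; together these parametrize $L_1 \times \cdots \times L_n = \A^{d_1 + \cdots + d_n}$. Consider the incidence variety $Z \subset Y \times (L_1 \times \cdots \times L_n)$ cut out by $f_1(x) = \cdots = f_n(x) = 0$, with its two projections $\pi_Y$ and $\pi_L$. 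The key observations are: (i) the fiber of $\pi_Y$ over a point $x \in Y$ is a product of $n$ hyperplanes (one linear condition on each parameter block $c_j^{(\bullet)}$, nontrivial precisely because $x$ is not in the base locus of $L_j$), hence $Z$ is irreducible of dimension $(d_1 + \cdots + d_n) - n + n = d_1 + \cdots + d_n$ — wait, $\dim Z = \dim Y + \sum_j (d_j - 1) = n + \sum d_j - n = \sum d_j$ — wait, this is just $\dim(L_1 \times \cdots \times L_n)$; so a general fiber of $\pi_L$ is $0$-dimensional, i.e. finite, which gives the finiteness of the root set over a nonempty open ${\bf U}_1 \subset L_1 \times \cdots \times L_n$. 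For smoothness/simplicity of the roots, I would invoke a Bertini-type / generic-smoothness argument: the universal differential of the map $(x, {\bf c}) \mapsto (f_1(x), \ldots, f_n(x))$ along the fibers of $\pi_L$ is surjective on a dense open of $Z$ because varying the $c_j^{(k)}$ alone already surjects onto the tangent directions (the $f_j^{(k)}$ span $L_j$ and $L_j$ is base-point-free on $Y$, so the evaluation map is a submersion), hence the generic fiber of $\pi_L$ is smooth of dimension $0$, i.e. a reduced finite set of simple points, over a nonempty open ${\bf U}_2$.

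\medskip

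For the independence of the number of roots on ${\bf f} \in {\bf U} := {\bf U}_1 \cap {\bf U}_2$: the function ${\bf f} \mapsto \#\pi_L^{-1}({\bf f})$ is constant on the connected (indeed irreducible, rational) variety $L_1 \times \cdots \times L_n$ restricted to the open locus where the fiber is finite and reduced, because $\pi_L$ is then étale there and the degree of a generically finite dominant morphism from an irreducible variety is constant on the open locus over which it is finite flat; alternatively, over $\c$ one can use that the cardinality of a reduced fiber is locally constant in the analytic topology and ${\bf U}$ is connected. Independence of $\Sigma$ is the easy part: if $\Sigma \subset \Sigma'$ are two admissible choices, then $X \setminus \Sigma' \subset X \setminus \Sigma$ and the only roots that could be lost lie in $\Sigma' \setminus \Sigma$, which has dimension $< n$; a general ${\bf f}$ avoids having roots on any fixed proper subvariety (again by a dimension count in the incidence variety: the locus of ${\bf f}$ with a root in $\Sigma' \setminus \Sigma$ projects from something of dimension $< \sum d_j$), so for ${\bf f}$ in a possibly smaller nonempty open the two root counts agree.

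\medskip

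I expect the main obstacle to be the smoothness/simplicity assertion done uniformly — i.e. producing a single nonempty open ${\bf U}$ that simultaneously guarantees finiteness, reducedness, \emph{and} the constancy of the count — while being careful that $Y$ is only quasi-affine and possibly non-complete, so roots can "escape to infinity" as ${\bf f}$ varies; this is exactly why the statement only claims the count is constant on a nonempty \emph{open} ${\bf U}$ and not on all of $L_1 \times \cdots \times L_n$, and the cleanest way around it is the incidence-variety picture above, where $\pi_L : Z \to L_1 \times \cdots \times L_n$ is a morphism of varieties and one simply takes ${\bf U}$ to be the (nonempty, since $\dim Z = \dim(L_1 \times \cdots \times L_n)$ and $\pi_L$ is dominant) open over which $\pi_L$ is finite and étale. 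The remaining bookkeeping — spelling out why the fiber hyperplanes are genuine hyperplanes (base-point-freeness on $Y$), why the component count adds up in the reducible case, and why enlarging $\Sigma$ changes nothing — is routine and I would include it only to the level of detail the authors indicate they want ("the boring details").
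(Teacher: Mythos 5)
Your handling of the reducible case is essentially the paper's argument: decompose $X$ into irreducible components $X_i$, observe that the system on $X \setminus \Sigma$ splits into the systems on the $X_i \setminus (\Sigma \cap X_i)$, apply the irreducible statement to each, and add. The paper is somewhat more careful in the bookkeeping --- it notes explicitly that $X_i \cap X_j$ lies in the singular locus and hence in every admissible $\Sigma$, and it builds ${\bf U}$ as the intersection of preimages $\pi_i^{-1}({\bf U}_i)$ under the restriction maps $L_1 \times \cdots \times L_n \to L_{1|X_i} \times \cdots \times L_{n|X_i}$ --- but the idea is identical.

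Where you diverge is that the paper \emph{stops there}: the irreducible case is taken as known, cited from \cite{KKh-MMJ} and \cite{KKh-CMB}. You instead reprove it via the incidence variety $Z \subset Y \times (L_1 \times \cdots \times L_n)$ with its two projections, which is a natural and self-contained route. The dimension count ($\dim Z = \sum d_j$ because $Z \to Y$ is an affine subbundle with fibers of dimension $\sum (d_j - 1)$), the finiteness of the generic fiber of $\pi_L$, and the constancy of the degree over the finite \'etale locus are all fine and give the existence of ${\bf U}$ with finite, constant-cardinality root sets and the independence of $\Sigma$.

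The gap is in the simplicity-of-roots step, and it is twofold. First, the sentence ``the universal differential \ldots\ \emph{along the fibers of $\pi_L$} is surjective \ldots\ because varying the $c_j^{(k)}$ alone already surjects'' conflates two different differentials: varying the parameters $c_j^{(k)}$ shows that $dF$ is surjective, where $F(x, {\bf c}) = (f_1(x), \ldots, f_n(x))$, and hence that the total space $Z = F^{-1}(0)$ is smooth --- but the fibers of $\pi_L$ have the ${\bf c}$-coordinates \emph{frozen}, so their smoothness is a condition on the $n \times n$ Jacobian $(\partial f_j / \partial x_i)$, which the ${\bf c}$-variation says nothing about. To pass from ``$Z$ smooth'' to ``generic fiber of $\pi_L$ smooth'' you must invoke generic smoothness (Sard/Bertini). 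Second, and more seriously, generic smoothness fails in positive characteristic, while the paper assumes only that $\k$ is algebraically closed. For instance, on $X = \A^1$ with $L_1 = \span(1, x^p)$ in characteristic $p$, the generic $f = a + b x^p = b\bigl(x + (a/b)^{1/p}\bigr)^p$ has a single root of multiplicity $p$, so $\pi_L$ is generically purely inseparable and never \'etale. Your argument therefore needs either a characteristic-$0$ hypothesis or a substitute for the Bertini step (this is precisely the kind of subtlety the paper sidesteps by outsourcing the irreducible case to \cite{KKh-MMJ, KKh-CMB}); as written, the simplicity claim does not follow.
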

\begin{proof}
The proof is based on the following arguments. From \cite{KKh-MMJ} and  \cite{KKh-CMB} 
we know that the statement is true if $X$ is irreducible. Now let $X = \bigcup_{i} X_i$ where the $X_i$ are irreducible components of $X$. By assumption $\dim(X_i) = n$ for each 
component $X_i$. For any $i$ consider the $n$-tuple of subspaces ${\bf L}_i = 
({L_1}_{|X_i}, \ldots, {L_n}_{|X_i})$ on $X_i$
and a subvariety $\Sigma_i \subset X_i$ such that $\dim(\Sigma_i) < n$, 
$\Sigma_{{\bf L}_i} \subset \Sigma_i$ and $\bigcup_{i \neq j} (X_i \cap X_j) \subset \Sigma_i$.
Applying the statement for irreducible $X_i$ equipped with the $n$-tuple of subspaces
${\bf L}_i$ and with the subvarieties $\Sigma_i$ we obtain the following: There is a nonempty Zariski open set $\tilde{\bf U}_i \subset L_1 \times \cdots \times L_n$ such that all roots of the system
\begin{equation} \label{equ-system-Sigma_i}
f_1(x) = \cdots = f_n(x) = 0, \quad x \in X_i \setminus \Sigma_i
\end{equation}
for ${\bf f} = (f_1,\ldots,f_n) \in \tilde{\bf U}_i$, are simple and their number $N_i$ is independent of the
choices of $\Sigma_i$ and ${\bf f} \in \tilde{\bf U}_i$. (Here we use that the preimage 
$\tilde{\bf U}_i = \pi^{-1}({\bf U}_i)$ of a Zariski
open set ${\bf U}_i \subset {L_1}_{|X_i} \times \cdots \times {L_n}_{|X_i}$ under the restriction map 
$\pi_i : L_1 \times \cdots \times L_n \to {L_1}_{|X_i} \times \cdots \times {L_n}_{|X_i}$ 
is a Zariski open set.) Now we return to the proof of the Theorem 
\ref{th-int-index-well-def}. Let $\Sigma \subset X$ be such that $\dim(\Sigma) < n$ and $\Sigma_{\bf L} \subset \Sigma$. For each component $X_i$  
put $\Sigma_i = \Sigma \cap X_i$. 
The intersection $X_i \cap X_j$ for $i \neq j$ 
belongs to the set of singular points of $X$, so $\bigcup_{i \neq j} (X_i \cap X_j) \subset \Sigma_i$. According to the previous arguments there is $\tilde{\bf U}_i$ such that for any ${\bf f} \in \tilde{\bf U}_i$ all roots of the system \eqref{equ-system-Sigma_i} are simple and their number $N_i$ is independent of ${\bf f}$ and $\Sigma$. To complete the proof it is enough to take ${\bf U} = \bigcap \tilde{\bf U}_i$. For any ${\bf f} \in {\bf U}$ all the roots of the system \eqref{equ-system-Sigma} are simple and their number is equal to $N = \sum_i N_i$.
\end{proof}

\begin{Def} \label{def-int-index}
With notation as in Theorem \ref{th-int-index-well-def} 
we call the number of solutions $\#\{x \in X \setminus \Sigma \mid f_1(x) = \cdots = f_n(x) = 0\}$, 
the {\it intersection index of $(L_1, \ldots, L_n)$} and denote it by
$[L_1, \ldots, L_n]$.
\end{Def}

Moreover one has the following:
\begin{Th} \label{th-int-index-leq}
For any $(g_1,\ldots,g_n) \in L_1 \times \cdots \times L_n$ and for $\Sigma_L$ equal to the union of all the poles and the base loci of the $L_i$ the number of isolated solutions $x \in X \setminus \Sigma_L$ 
of the system $g_1(x) = \cdots = g_n(x) = 0$ is less than or equal to $[L_1,\ldots,L_n]$.
\end{Th}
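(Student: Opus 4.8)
The plan is to reduce to the case of an irreducible variety $X$ --- for which the statement is part of the intersection theory of \cite{KKh-MMJ, KKh-CMB} --- by decomposing $X$ into its irreducible components exactly as in the proof of Theorem \ref{th-int-index-well-def}. Writing $X = \bigcup_{i=1}^{r} X_i$, every isolated solution of $g_1 = \cdots = g_n = 0$ on $X \setminus \Sigma_L$ lies on some component $X_i$ and restricts there to an isolated solution of the corresponding system off the poles and base loci of the restrictions $({L_1}_{|X_i}, \ldots, {L_n}_{|X_i})$. Applying the irreducible case on each $X_i$ and summing --- a point lying on several components is merely counted several times, which only weakens the inequality --- bounds the number of isolated solutions on $X \setminus \Sigma_L$ by $\sum_i [{L_1}_{|X_i}, \ldots, {L_n}_{|X_i}] = [L_1, \ldots, L_n]$, where the last equality is Definition \ref{def-int-index} together with the additivity over components recorded in the proof of Theorem \ref{th-int-index-well-def}. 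So it remains to treat irreducible $X$.

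For irreducible $X$ the content is that the number of isolated solutions of a system, counted inside $X \setminus \Sigma_L$, cannot increase under specialization of the system. A preliminary Bertini-type remark: for a generic $(f_1, \ldots, f_n) \in L_1 \times \cdots \times L_n$ all $[L_1, \ldots, L_n]$ of its simple roots already lie in $X \setminus \Sigma_L$, and not merely in $X \setminus \Sigma_{\bf L}$, since any irreducible subvariety $W$ of $\Sigma_{\bf L} \setminus \Sigma_L$ has dimension $< n$ and is disjoint from $\Sigma_L$, hence from the base locus of each $L_j$, so for generic coefficients no $f_j$ vanishes identically on $W$ and imposing all $n$ of them leaves $W \cap \{f_1 = \cdots = f_n = 0\}$ empty. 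Thus $[L_1, \ldots, L_n]$ equals the number of roots of a generic system lying in $X \setminus \Sigma_L$. Now deform a given $(g_1, \ldots, g_n)$ to such a generic system along a generic line in $L_1 \times \cdots \times L_n$, and around each of the finitely many isolated solutions $x_0 \in X \setminus \Sigma_L$ of the $g$-system choose a small neighborhood $B_{x_0}$, disjoint from $\Sigma_L$ (which is closed and does not contain $x_0$) and from the neighborhoods of the other isolated solutions. By conservation of number the generic system has at least one root in each $B_{x_0}$, so it has at least as many roots in $X \setminus \Sigma_L$ as the $g$-system has isolated solutions there; this is the claimed inequality.

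The step I expect to cost the most effort is the conservation-of-number claim at a point $x_0$ that may be a singular point of $X$, over an arbitrary algebraically closed field. Algebraically, $\ell\big(\mathcal{O}_{X, x_0}/(g_1, \ldots, g_n)\big)$ is finite and positive because $x_0$ is an isolated solution on the $n$-dimensional component of $X$ through it; after shrinking $B_{x_0}$ and choosing the line generically, the incidence variety over the line, localized in $B_{x_0}$, has every irreducible component dominating the line and is therefore flat over that smooth curve, so the fibre length is constant and the generic system picks up exactly that many simple roots in $B_{x_0}$. Over $\k = \c$ one may argue topologically instead, via the local degree of $(f_1, \ldots, f_n)$ on the boundary sphere of $B_{x_0}$, which is invariant along the deformation, coping with a singularity of $X$ at $x_0$ by desingularization or by passing to analytic germs --- this is the route of \cite{KKh-MMJ}, whereas \cite{KKh-CMB} supplies the scheme-theoretic version. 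One must also observe that a positive-dimensional component of the special fibre, if present, lies over the single parameter value $(g_1, \ldots, g_n)$ and contributes only extra length, hence never violates the bound.
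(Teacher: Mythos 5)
Your reduction to the irreducible case is exactly what the paper has in mind: the paper does not reprove Theorem \ref{th-int-index-leq} but only remarks that it was shown in \cite{KKh-MMJ} for irreducible $X$ and that the reducible case follows by decomposing into components, as illustrated by the proof of Theorem \ref{th-int-index-well-def}. Your component-by-component argument, with the observation that $\Sigma_L \cap X_i$ is the corresponding set of poles and base loci on $X_i$, that an isolated solution in $X$ remains isolated on each component containing it, and that over-counting points on several components only strengthens the inequality, is precisely the ``easily resolved'' adjustment the paper alludes to. So for the portion of the argument the paper itself supplies, your proof matches.

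Where you go further is in reproving the irreducible case, which the paper only cites. The structure (Bertini plus conservation of number along a generic pencil) is right, but the conservation-of-number step as written has two gaps you should be aware of. First, flatness of a scheme over a smooth curve is equivalent to every \emph{associated} point lying over the generic point of the curve; it is not enough that every irreducible component dominates, since the total space $\mathcal{Z}$ is typically non-reduced and can have embedded points in the special fibre. Second, the phrase ``the incidence variety over the line, localized in $B_{x_0}$, \dots\ so the fibre length is constant'' presupposes both a genuine neighborhood and finiteness of $\mathcal{Z}\to C$ over it; over an arbitrary algebraically closed field there are no small balls, and mere quasi-finiteness of the local ring $\mathcal{O}_{X\times C,(x_0,0)}/(g_1+th_1,\dots,g_n+th_n)$ over $\mathcal{O}_{C,0}$ does not give finiteness. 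The standard fix is to pass to the henselization (or completion) of $\mathcal{O}_{C,0}$, where quasi-finite becomes a product of finite local pieces, and then argue with lengths; over $\c$ the topological local degree argument you mention does this work automatically. You flag this as the hard step, correctly; the sketch is plausible but, as written, these two points would need to be filled in before the proof is complete.

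Your Bertini remark would also benefit from being phrased via the incidence variety $\{(x,f_1,\dots,f_n)\in S_\alpha\times L_1\times\cdots\times L_n : f_j(x)=0\}$ over each locally closed stratum $S_\alpha$ of $\Sigma_{\bf L}\setminus\Sigma_L$: since $S_\alpha$ is disjoint from every base locus, evaluation at $x\in S_\alpha$ is surjective $L_j\to\k$, so the fibre over $x$ has codimension $n>\dim S_\alpha$ and the projection to $L_1\times\cdots\times L_n$ is not dominant. The inductive ``cut dimension by one at each step'' version you give is essentially correct but slurs over what happens if some component of an intermediate intersection happens to land inside a pole locus.
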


Let $L_1, \ldots, L_n \in \K(X)$ be finite dimensional subspaces of rational functions on $X$. For $0 < k < n$ 
and ${\bf f} = (f_1, \ldots, f_k) \in L_1 \times \cdots \times L_k$ define
$$X_{\bf f} = \{ x \in X \setminus \Sigma \mid f_1(x) = \cdots = f_k(x) = 0\}.$$  
Here as above $\Sigma$ is a subvariety of $X$ with $\dim(\Sigma) < n$ and contains the union of poles and base loci of the $L_i$.
 
\begin{Th}[Restriction theorem] \label{th-int-index-rest}
There exists a nonempty Zariski open subset ${\bf U}$ of $L_{k+1} \times \cdots \times L_n$ such that for any 
${\bf f} = (f_{k+1}, \ldots, f_n) \in {\bf U}$ we have: 
\begin{itemize}
\item[(1)] Each irreducible component of $X_{\bf f}$ is a variety of dimension $k$. 
\item[(2)] The intersection index $[{L_{1}}_{|X_{\bf f}}, \ldots, {L_{k}}_{|X_{\bf f}}]$ on the variety $X_{\bf f}$  coincides with 
$[L_1, \ldots, L_n]$.
\end{itemize}
\end{Th}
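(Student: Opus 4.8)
The plan is to prove the Restriction Theorem (Theorem \ref{th-int-index-rest}) by first reducing to the irreducible case — which is already available from \cite{KKh-MMJ, KKh-CMB} — in the same spirit as the proof of Theorem \ref{th-int-index-well-def} above, and then to package the two assertions (1) and (2) together using a ``first cut a generic hypersurface, then iterate'' argument. More precisely, I would argue by downward induction on $k$, the base case $k = n-1$ being a single generic equation $f_n \in L_n$: one shows that for $f_n$ in a suitable nonempty Zariski open set ${\bf U}_n \subset L_n$, the zero locus $X_{f_n} = \{x \in X \setminus \Sigma \mid f_n(x) = 0\}$ is a (possibly reducible) variety all of whose components have dimension $n-1$, and that passing to this hypersurface does not change the count of solutions of a generic remaining $(n-1)$-tuple. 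The equidimensionality in part (1) is the standard statement that a generic member of a subspace with no fixed components cuts down dimension by exactly one on each component of $X$; here the only mild subtlety is that one must stay away from poles and base loci, which is exactly what removing $\Sigma$ (containing $\Sigma_{\bf L}$) arranges, and one must absorb any new lower-dimensional ``bad locus'' on $X_{f_n}$ (its singular locus, the poles and base loci of the restricted ${L_i}_{|X_{f_n}}$) into the $\Sigma$ used at the next stage, which is harmless since it is again of dimension $< n-1$.

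For part (2), the heart of the matter is the identity $[{L_1}_{|X_{f_n}}, \ldots, {L_{n-1}}_{|X_{f_n}}] = [L_1, \ldots, L_n]$ for generic $f_n$. I would deduce this directly from the definition of the intersection index (Definition \ref{def-int-index}) together with a genericity/openness argument: choose generic $(f_1, \ldots, f_{n-1}) \in L_1 \times \cdots \times L_{n-1}$, so that the full system $f_1 = \cdots = f_n = 0$ on $X \setminus \Sigma$ has exactly $[L_1, \ldots, L_n]$ simple solutions by Theorem \ref{th-int-index-well-def}; these are precisely the solutions of $f_1 = \cdots = f_{n-1} = 0$ lying on $X_{f_n} \setminus \Sigma$, i.e. the solutions counted by the intersection index on $X_{f_n}$, provided $(f_1, \ldots, f_{n-1})$ is also generic with respect to the restricted subspaces on $X_{f_n}$. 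The point is that the set of $(f_1, \ldots, f_n)$ for which \emph{both} genericity conditions hold is a nonempty Zariski open subset of $L_1 \times \cdots \times L_n$ — nonemptiness following because each condition individually cuts out a nonempty open set and the product of nonempty opens is nonempty — and projecting to the last factor gives the desired ${\bf U} \subset L_n$ (shrink if necessary so that for every $f_n \in {\bf U}$ the fiber of generic good tuples is nonempty). One must check that ``simple root on $X$'' and ``simple root on $X_{f_n}$'' agree for these solutions; this is a local transversality computation: if $f_n$ vanishes to order one along $X_{f_n}$ at such a point and the $df_i$ together with $df_n$ are independent, then the $df_i$ restricted to the tangent space of $X_{f_n}$ are independent.

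The reduction to the irreducible case proceeds exactly as in the proof of Theorem \ref{th-int-index-well-def}: write $X = \bigcup_i X_i$, note each $X_i$ has dimension $n$, and for each $i$ throw $\bigcup_{j \neq i}(X_i \cap X_j)$ (which lies in the singular locus of $X$, hence in $\Sigma$) into the bad locus $\Sigma_i = \Sigma \cap X_i$. Applying the irreducible version of the Restriction Theorem on each $X_i$ with the $n$-tuple $({L_1}_{|X_i}, \ldots, {L_n}_{|X_i})$ yields a nonempty Zariski open ${\bf U}_i \subset L_{k+1} \times \cdots \times L_n$ (after pulling back along the restriction maps, which preserves Zariski-openness, as noted in the excerpt) on which $(X_i)_{\bf f}$ is equidimensional of dimension $k$ and the restricted intersection index there equals $[{L_1}_{|X_i}, \ldots, {L_n}_{|X_i}]$; intersecting, ${\bf U} = \bigcap_i {\bf U}_i$ works, and the additivity of the intersection index over components (built into the definition via $\K(X) = \bigoplus_i \K(X_i)$) gives $[{L_1}_{|X_{\bf f}}, \ldots, {L_k}_{|X_{\bf f}}] = \sum_i [{L_1}_{|(X_i)_{\bf f}}, \ldots] = \sum_i [{L_1}_{|X_i}, \ldots, {L_n}_{|X_i}] = [L_1, \ldots, L_n]$, while part (1) follows since $X_{\bf f} = \bigcup_i (X_i)_{\bf f}$ up to the lower-dimensional locus $\Sigma$. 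I expect the main obstacle to be a clean bookkeeping of the nested genericity: one is choosing $f_n$ generic, then $f_{n-1}$ generic \emph{relative to that choice}, and so on, and one must be careful that the final open set can be taken in the product $L_{k+1} \times \cdots \times L_n$ uniformly rather than as a tower of fibered opens — this is handled by the elementary fact that a constructible set dense in $L_{k+1} \times \cdots \times L_n$ contains a nonempty open set, but it deserves an explicit remark.
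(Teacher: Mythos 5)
Your reduction from reducible $X$ to its irreducible components --- putting the pairwise intersections $X_i \cap X_j$ into $\Sigma$, pulling back Zariski open sets along the restriction maps $\pi_i$, intersecting, and then summing the resulting intersection indices via $\K(X) = \bigoplus_i \K(X_i)$ --- is precisely the strategy the paper indicates: it says the arguments of \cite{KKh-MMJ} ``can be used or repeated almost word by word'' once one extends the definitions to reducible varieties, and points to its proof of Theorem~\ref{th-int-index-well-def} as the model, which you have mirrored faithfully (including the observation that the components of $X_{\bf f}$ are the disjoint union of those of the $(X_i)_{\bf f}$ because $\Sigma$ absorbs the intersections). The paper does not reprove the irreducible case; it is delegated to \cite{KKh-MMJ}, so there is no in-text argument to compare your one-hypersurface-at-a-time sketch against. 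That sketch is a reasonable reconstruction, and you have put your finger on the genuine subtlety: the genericity condition on $(f_1,\dots,f_{n-1})$ relative to $X_{f_n}$ depends on $f_n$, so the set of ``doubly generic'' tuples is a priori only constructible. Your proposed fix --- a dense constructible subset of the irreducible $L_1\times\cdots\times L_n$ contains a nonempty open set, and its (dominant) projection to $L_n$ again contains a nonempty open set --- is the standard and correct way to obtain a uniform ${\bf U}$; I would just add that this is also where one uses that for each good $f_n$ the fiber of good $(f_1,\dots,f_{n-1})$ is nonempty open, so that the constructible set is indeed dense. The local transversality remark equating ``simple on $X$'' with ``simple on $X_{f_n}$'' is correct as stated. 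Overall your route agrees with the paper's on the part the paper actually argues, and supplements the cited part with a plausible sketch.
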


\begin{Th}[Multi-additivity] \label{th-int-index-multiadd}
Let $L_1', L_1'', L_2, \ldots, L_n$ be subspaces of rational functions in $\K(X)$. Then 
$$[L_1'L_1'', L_2, \ldots, L_n] = [L_1', L_2, \ldots, L_n] + [L_1'', L_2, \ldots, L_n].$$ That is, on $\K(X)$, 
the intersection index is multi-additive in each argument
\end{Th}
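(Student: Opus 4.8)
\emph{The plan} is to reduce the statement to dimension one by means of the Restriction Theorem (Theorem~\ref{th-int-index-rest}), and then to settle the one-dimensional case by passing to a complete smooth model of the curve and comparing divisors of zeros with divisors of poles.

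\textbf{Step 1 (reduction to curves).} Suppose first $n\ge 2$; the case $n=1$ is already the situation of Step~2. I would choose a subvariety $\Sigma\subset X$ with $\dim\Sigma<n$ containing the union of the poles and base loci of $L_1'$, $L_1''$, $L_1'L_1''$, $L_2,\dots,L_n$ (and the singular locus of $X$), and apply Theorem~\ref{th-int-index-rest} with $k=1$ simultaneously to the three $n$-tuples $(L_1'L_1'',L_2,\dots,L_n)$, $(L_1',L_2,\dots,L_n)$, $(L_1'',L_2,\dots,L_n)$. Intersecting the three nonempty Zariski-open subsets of $L_2\times\cdots\times L_n$ that this produces, I obtain a single generic ${\bf f}=(f_2,\dots,f_n)$ for which $X_{\bf f}$ is a (possibly reducible) curve of pure dimension $1$ and
$$[L_1'L_1'',L_2,\dots,L_n]=[(L_1'L_1'')_{|X_{\bf f}}],\quad [L_1',L_2,\dots,L_n]=[{L_1'}_{|X_{\bf f}}],\quad [L_1'',L_2,\dots,L_n]=[{L_1''}_{|X_{\bf f}}].$$
Since restriction of rational functions is multiplicative, $({L_1'}_{|X_{\bf f}})({L_1''}_{|X_{\bf f}})=(L_1'L_1'')_{|X_{\bf f}}$ as elements of $\K(X_{\bf f})$, so it is enough to prove $[MN]=[M]+[N]$ for $M,N\in\K(Y)$ with $Y$ a curve all of whose components have dimension $1$. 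Using $\K(Y)=\bigoplus_i\K(Y_i)$ together with the fact (contained in the proof of Theorem~\ref{th-int-index-well-def}) that the intersection index on $Y$ is the sum of the intersection indices over the irreducible components, I may assume $Y$ irreducible; and replacing $Y$ by its smooth complete model changes none of $[M],[N],[MN]$, since each one counts the zeros of a generic function in the complement of a fixed proper closed subset.

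\textbf{Step 2 (the one-dimensional case).} Work on the smooth complete curve $\bar Y$ with function field $\k(Y)$. To $M\in\K(\bar Y)$ I would attach the effective divisor $P_M$ with coefficient $\max\{0,-\min_{f\in M}\ord_p f\}$ at $p$ (the pole bound) and the effective divisor $B_M$ with coefficient $\max\{0,\min_{f\in M}\ord_p f\}$ at $p$ (the base divisor), and similarly $P_N,B_N$. A generic $f\in M$ attains the pole bound at every point and its divisor of zeros equals $B_M$ plus a reduced divisor disjoint from $\Sigma$; since $\deg\operatorname{div}(f)=0$ on $\bar Y$, this gives $[M]=\deg P_M-\deg B_M$, and likewise $[N]=\deg P_N-\deg B_N$. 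On the other hand, writing $h=\sum_i f_ig_i$ with $f_i\in M,\ g_i\in N$ and using additivity of $\ord_p$, every $h\in MN$ satisfies $\operatorname{div}(h)+P_M+P_N\ge 0$ and $\operatorname{div}(h)\ge B_M+B_N$; as $B_M+B_N$ is supported inside $\Sigma$ and $\deg\operatorname{div}(h)=0$, the number of zeros of $h$ outside $\Sigma$ is at most $\deg(P_M+P_N)-\deg(B_M+B_N)=[M]+[N]$, so in particular $[MN]\le[M]+[N]$. For the opposite inequality I would take generic $f\in M$ and $g\in N$: their zero loci outside $\Sigma$ are finite, reduced and generically disjoint, so $fg\in MN$ has exactly $[M]+[N]$ simple zeros outside $\Sigma$, and Theorem~\ref{th-int-index-leq} then gives $[MN]\ge[M]+[N]$. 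Combining the two inequalities proves the claim, and Step~1 transports it back to arbitrary $n$ and arbitrary $X$.

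\textbf{Main obstacle.} The entire content of the argument is hidden in the adjective ``generic''. The delicate points to verify are: that passing to the smooth complete model and to generic functions leaves all three intersection indices unchanged; that a generic $f\in M$ genuinely realizes the pole bound $P_M$ and vanishes along $\Sigma$ only to the order prescribed by $B_M$; and that the zero and pole divisors of an arbitrary $h\in MN$ are controlled by $P_M,P_N,B_M,B_N$ exactly as stated. The reduction to a reducible base curve in Step~1 is routine but, as the remark preceding Theorem~\ref{th-int-index-well-def} cautions, must be written out in the same spirit as the proof of that theorem.
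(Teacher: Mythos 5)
The paper itself does not reprove this theorem; it cites \cite{KKh-MMJ} and merely notes that the extension to reducible varieties can be handled as in the proof of Theorem~\ref{th-int-index-well-def}. Your proposal is therefore a proof from scratch, and its structure (reduce to $n=1$ via the Restriction Theorem, then count degrees of divisors on the smooth complete model of the resulting curve) is essentially the standard argument and close in spirit to what the cited source does. Step~1 is fine, including the care you take in intersecting three open sets and in passing through $\K(Y)=\bigoplus_i\K(Y_i)$. One logical point worth flagging explicitly: you invoke Theorem~\ref{th-int-index-rest}, which the paper also attributes to \cite{KKh-MMJ}; your argument is non-circular only because the Restriction Theorem is a pure genericity (Bertini-type) statement not depending on multi-additivity, and a careful writeup should say so.

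In Step~2 there is a genuine slip in the intermediate inequalities. The claim that every $h\in MN$ satisfies $\mathrm{div}(h)\ge B_M+B_N$ is false: a point $p$ can lie in the base locus of $M$ (so $B_M(p)>0$) while simultaneously being a pole of $N$ (so $P_N(p)>0$), and then for $f\in M$, $g\in N$ of minimal order at $p$ one gets $\ord_p(fg)=B_M(p)-P_N(p)$, which may be negative. What additivity of $\ord_p$ actually gives, for any $h=\sum_i f_ig_i$, is the combined pointwise bound $\mathrm{div}(h)\ge B_M+B_N-P_M-P_N$ (equivalently $\mathrm{div}(h)+P_M+P_N\ge B_M+B_N\ge 0$, which is your first inequality). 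Fortunately your degree count uses only this combined bound: from $0=\deg\mathrm{div}(h)$ and $\ord_p h\ge (B_M+B_N-P_M-P_N)(p)$ at each $p\in\Sigma$, the number of zeros of a generic $h$ outside $\Sigma$ is at most $\deg(P_M+P_N)-\deg(B_M+B_N)=[M]+[N]$, so the conclusion $[MN]\le[M]+[N]$ survives once the wrong intermediate statement is replaced by the correct one. The opposite inequality via a generic product $fg$ and Theorem~\ref{th-int-index-leq} is correct. So the proposal is sound modulo this correction, and it matches the approach the paper relies on by citation.
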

Theorems \ref{th-int-index-leq} to \ref{th-int-index-multiadd} were proved in \cite{KKh-MMJ} with a slight logical imprecision which can be easily resolved as discussed above.

Each $L \in \K(X)$ gives rise to a rational map $\Phi_L$ from $X$ to the projective space $\p(L^*)$, where $L^*$ is the dual vector space of $L$, as follows: Let $x \in X$ be a point not in the base locus of $L$ or a pole of $L$. Define $\Phi_L(x)$ be the point in $\p(L^*)$ represented by the linear function $f \mapsto f(x)$.
\begin{Def}[Kodaira map] \label{def-Kodaira-map}
We call the rational map $\Phi_L: X \ratmap \p(L^*)$ the {\it Kodaira map of $L$}.
\end{Def}

From the definition of the rational map $\Phi_L$ we immediately have the following.
Let $Y_L$ denote the closure of the image of $X$ under $\Phi_L$ in the projective space $\p(L^*)$.
\begin{Prop}(Intersection index and degree of a projective subvariety) \label{prop-int-index-degree}
Let us assume that $\Phi_L$ is a birational isomorphism between $X$ and $Y_L$. Then degree of $Y_L$, as a subvariety of $\p(L^*)$, is equal to $[L, \ldots, L]$.
\end{Prop}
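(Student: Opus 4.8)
The plan is to reduce the statement to the classical fact that the degree of a projective variety equals the number of points in which it meets a generic linear subspace of complementary dimension, and then to match that count with the intersection index $[L,\ldots,L]$ via the Kodaira map. First I would recall that $Y_L \subset \p(L^*)$ is an irreducible projective variety of dimension $n$ (since $\Phi_L$ is assumed birational onto $Y_L$ and $\dim X = n$; if $X$ is reducible we argue component by component, exactly as in the proof of Theorem \ref{th-int-index-well-def}, and add the contributions). By definition, $\deg Y_L$ is the number of intersection points of $Y_L$ with a generic linear subspace $\Lambda \subset \p(L^*)$ of codimension $n$.

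The key step is the translation between linear subspaces of $\p(L^*)$ and elements of $L$. A hyperplane in $\p(L^*)$ is cut out by a nonzero linear form on $L^*$, i.e.\ by an element $f \in L$ (up to scalar); its zero locus $H_f \subset \p(L^*)$ pulls back under $\Phi_L$ to the divisor $\{f = 0\}$ on $X$ (away from the base locus and poles of $L$). Choosing $n$ generic elements $f_1, \ldots, f_n \in L$, the linear subspace $\Lambda = H_{f_1} \cap \cdots \cap H_{f_n}$ has codimension $n$ for generic choice, and since $\Phi_L$ is a birational isomorphism onto its image, the points of $Y_L \cap \Lambda$ lying in $\Phi_L(X \setminus \Sigma)$ correspond bijectively to the solutions $x \in X \setminus \Sigma$ of the system $f_1(x) = \cdots = f_n(x) = 0$, where $\Sigma$ is the union of poles and base loci of $L$ together with $\mathrm{Sing}(X)$ and the locus where $\Phi_L$ fails to be an isomorphism. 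The number of such solutions is $[L, \ldots, L]$ by Definition \ref{def-int-index}.

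What remains is to check that no intersection points of $Y_L$ with a generic $\Lambda$ are lost by passing to the open subset $\Phi_L(X\setminus\Sigma)$: since $\Phi_L(\Sigma)$ has dimension $<n$ in $Y_L$, a generic codimension-$n$ linear subspace avoids it, so $Y_L \cap \Lambda$ is entirely contained in $\Phi_L(X \setminus \Sigma)$ and consists of $\deg Y_L$ reduced points. Combined with the previous paragraph this gives $\deg Y_L = [L,\ldots,L]$. The main obstacle is the genericity bookkeeping: one must argue that the \emph{same} generic $n$-tuple $(f_1,\ldots,f_n)$ simultaneously (i) computes $\deg Y_L$ as a transverse intersection with $\Lambda$, (ii) lies in the Zariski open set $\mathbf{U}$ of Theorem \ref{th-int-index-well-def} computing $[L,\ldots,L]$, and (iii) gives a $\Lambda$ disjoint from $\Phi_L(\Sigma)$. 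Each of these is an open dense condition on $L^n$ (the first because degree is computed by a generic linear section, the second and third by the cited results), so their intersection is nonempty, and any tuple in it yields the equality.
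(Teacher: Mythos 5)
Your argument is correct and is exactly the argument the paper has in mind: the paper gives no written proof, stating only that the proposition ``immediately'' follows from the definition of the Kodaira map, and what it means by that is precisely the correspondence you spell out between elements $f\in L$, hyperplanes $H_f\subset\p(L^*)$, and the divisors $\{f=0\}$ on $X$, combined with the fact that degree equals the number of points in a generic linear section. Your genericity bookkeeping at the end (transversality of the linear section, membership in the open set $\mathbf U$ of Theorem \ref{th-int-index-well-def}, and avoidance of $\Phi_L(\Sigma)$ together with the lower-dimensional locus outside the image) is the right way to make ``immediately'' rigorous, and your use of the freedom in choosing $\Sigma$ allowed by Theorem \ref{th-int-index-well-def} to absorb the non-isomorphism locus of $\Phi_L$ is the key observation.
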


Let $K$ be a commutative semigroup (whose operation we denote by
multiplication). $K$ is said to have the {\it cancellation property} if
for $x,y,z \in K$, the equality $xz=yz$ implies $x=y$. Any
commutative semigroup $K$ with the cancellation property can be extended
to an abelian group $G(K)$ consisting of formal quotients $x/y$, $x,
y \in K$. For $x,y,z,w \in K$ we identify the quotients $x/y$ and
$w/z$, if $xz = yw$.

Given a commutative semigroup $K$ (not necessarily with the cancellation
property), we can get a semigroup with the cancellation property by
considering the equivalence classes of a relation $\sim$ on $K$:
for $x, y \in K$ we say $x \sim y$ if there is $z \in K$ with $xz = yz$. The
collection of equivalence classes $K / \sim$ naturally has structure
of a semigroup with the cancellation property. Let us denote the group
of formal quotients of $K / \sim$ again by $G(K)$. It is called the {\it
Grothendieck group of the semigroup $K$}. The map which sends $x \in K$ to its
equivalence class $[x] \in K / \sim$ gives a natural homomorphism
$\phi: K \to G(K)$.

The Grothendieck group $G(K)$ together with the homomorphism $\phi: K \to
G(K)$ satisfies the following universal property: for any other
group $G'$ and a homomorphism $\phi': K \to G'$, there exists a unique
homomorphism $\psi: G(K) \to G'$  such that $\phi' = \psi \circ
\phi$.

\begin{Def}[Grothendieck group of subspaces of rational functions] \label{def-G(X)}
We denote by $\G(X)$ the Grothendieck group of the semigroup $\K(X)$ with respect to the product of subspaces.
By Theorem \ref{th-int-index-multiadd}, the intersection index extends to a multi-additive function on the abelian group $\G(X)$.
\end{Def}

{Consider the case where $X$ is irreducible and hence $\k(X)$ is a field. An element $f \in \k(X)$ is said to be {\it integral} over a subspace $L \in \K(X)$ if 
there exist an integer $m$ and elements $a_j \in L^j, j = 1, \ldots, m$ such that:
$$f^{m} +a_1f^{m-1} +a_2 f^{m-2} + \cdots +a_{m-1}f+ a_{m} = 0.$$
Let $\overline{L}$ denote the collection of all $f \in \k(X)$ which are integral over $L$.
It is a standard fact from commutative algebra that $\overline{L}$ is a finite dimensional subspace of $\k(X)$ containing $L$. It is called the {\it completion of $L$}. 

Now consider the general case where $X$ is not necessarily irreducible. For $L \in \K(X)$ let us define 
the {\it completion} $\overline{L}$ to be $\overline{L} = \bigoplus_{i=1}^r \overline{L_{|X_i}}$. 
Clearly, $\overline{L}$ is the collection of all $f \in \k(X)$ such that $f_{|X_i}$ is integral over 
$L_{|X_i}$ for every $i$.
One can give the following characterization of equivalence of elements in the commutative semigroup $\K(X)$ in terms of completion of subspaces (see \cite{SZ}):}
\begin{Th} \label{th-int-closure-subspace}
For $L \in \K(X)$, the completion $L$ is the largest subspace which is equivalent to $L$: That is, (1) $L \sim \overline{L}$ and (2) 
if for $M \in \K(X)$ we have $M \sim L$ then $M \subset \overline{L}$.
\end{Th}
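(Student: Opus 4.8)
The plan is to reduce the statement to the irreducible case by working componentwise, then prove it there using the algebraic definition of integral dependence over a subspace and the two defining properties of equivalence $\sim$ in the semigroup $\K(X)$. So first I would note that, by the definition of $\K(X)$ as $\bigoplus_{i=1}^r \K(X_i)$ and of $\overline{L}$ as $\bigoplus_{i=1}^r \overline{L_{|X_i}}$, an equivalence $M \sim L$ holds in $\K(X)$ if and only if $M_{|X_i} \sim L_{|X_i}$ in $\K(X_i)$ for every $i$, and an inclusion $M \subset \overline{L}$ (as elements of $\K(X)$, i.e. $M_{|X_i} \subset \overline{L_{|X_i}}$) likewise holds componentwise; hence it suffices to prove both assertions when $X$ is irreducible and $\k(X)$ is a field, which is the setting of the standard commutative-algebra statement attributed to \cite{SZ}.

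For the irreducible case I would proceed in two steps. First, to see $L \sim \overline{L}$: since $\overline{L}$ is finite dimensional, pick a basis and it suffices to show $f L^N \subset L^{N+1}$ for some $N$ whenever $f$ is integral over $L$; indeed from a monic relation $f^m + a_1 f^{m-1} + \cdots + a_m = 0$ with $a_j \in L^j$ one gets $f^m \in L f^{m-1} + L^2 f^{m-2} + \cdots + L^m$, and iterating (multiplying through by powers of $L$ and substituting) shows $f^k L^m \subset L^{k+m}$ for all $k \geq 0$; applying this to each basis element of $\overline{L}$ and taking $N$ large enough gives $\overline{L}\, L^N = L^{N+1}$, which by definition of the relation $\sim$ (witnessed by $z = L^N$) yields $L \sim \overline{L}$ — note $L \subset \overline{L}$ automatically since each $f \in L$ satisfies $X - f = 0$. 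Second, to see maximality: suppose $M \sim L$ in $\K(X)$, so there is $P \in \K(X)$ with $MP = LP$. Then for any $f \in M$ and any nonzero $p \in P$ we have $fp \in MP = LP \subset L^{k}P^{k}$ for all... more carefully, $MP = LP$ gives $M P^k = L P^k$ for all $k$ by induction, so $f^k P^k \subset M^k P^k = L^k P^k$, hence $f^k p^k \in L^k P^k$ for a fixed nonzero $p$; writing this out produces, after choosing bases of $P$, a system expressing that $f$ satisfies an integral-dependence relation of the Cayley–Hamilton type over $L$, so $f \in \overline{L}$ and thus $M \subset \overline{L}$.

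The main obstacle — and the step requiring genuine care rather than bookkeeping — is the maximality direction, specifically extracting an honest monic polynomial relation $f^m + a_1 f^{m-1} + \cdots + a_m = 0$ with $a_j \in L^j$ from the semigroup equality $M P^k = L P^k$. The clean way is a determinant trick: fix a basis $p_1, \ldots, p_d$ of $P$; from $f p_i \in L P$ (which follows from $MP = LP$) one can, after passing to $P^{\text{large}}$ to absorb denominators and ensure the relevant products lie in the right graded pieces, write $f p_i = \sum_j \ell_{ij} p_j$ with $\ell_{ij} \in L$, i.e. $(f\,\mathrm{Id} - (\ell_{ij}))\,(p_1,\ldots,p_d)^{\mathsf T} = 0$ over the field $\k(X)$; since the $p_i$ are not all zero, $\det(f\,\mathrm{Id} - (\ell_{ij})) = 0$, and expanding this determinant gives exactly a monic relation of degree $d$ in $f$ whose coefficient of $f^{d-j}$ is (up to sign) the $j$-th elementary symmetric-type expression in the $\ell_{ij}$, hence lies in $L^j$. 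One should double-check the subtlety that the matrix entries $\ell_{ij}$ can genuinely be taken in $L$ and not merely in some localization — this is where replacing $P$ by a suitable power, and using finite-dimensionality to reduce to finitely many clearing-of-denominators operations, is needed; once that is arranged the conclusion $f \in \overline{L}$, and therefore $M \subset \overline{L}$ elementwise, is immediate, completing the proof.
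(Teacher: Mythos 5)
The componentwise reduction to irreducible $X$ is fine, and your determinant-trick argument for the maximality direction (2) is essentially correct — indeed it is cleaner than you seem to think: from $MP = LP$ and a basis $p_1,\ldots,p_d$ of $P$ one gets $fp_i = \sum_j \ell_{ij} p_j$ with $\ell_{ij} \in L$ \emph{directly}, because $LP$ is precisely the set of sums $\sum_j \ell_j p_j$ with $\ell_j\in L$; no passage to a power of $P$ or ``clearing denominators'' is needed, and the vanishing of $\det(f\,\mathrm{Id}-(\ell_{ij}))$ gives the integral equation with coefficient of $f^{d-j}$ in $L^j$.

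However, your proof of part (1) has a genuine gap. You claim that $f$ integral over $L$ implies $fL^N \subset L^{N+1}$ for some $N$, and that this follows by ``iterating'' the integral equation; neither is true. The iteration only controls $f^m, f^{m+1}, \ldots$ modulo the lower powers $1, f, \ldots, f^{m-1}$, so one can never eliminate the residual powers of $f$ and conclude $fL^N\subset L^{N+1}$. And in fact the inclusion is simply false: take $X=\mathbb{A}^1$, $L = \textup{span}(x,x^3)$, and $f=x^2$. Then $f$ is integral over $L$ (it satisfies $f^2 - x^4 = 0$ with $-x^4\in L^2$), but $L^{N+1}=\textup{span}(x^{N+1},x^{N+3},\ldots,x^{3N+3})$ contains only monomials of parity $N+1$, while $fL^N = \textup{span}(x^{N+2},x^{N+4},\ldots,x^{3N+2})$ has the opposite parity, so $fL^N \cap L^{N+1} = 0$. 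In particular $\overline{L}L^N \neq L^{N+1}$ for every $N$, so $L^N$ is never a witness to $L\sim\overline{L}$ here. The correct witness is different: for each basis element $f_i$ of $\overline{L}$ with integral equation of degree $m_i$, set $W_i = L^{m_i} + f_i L^{m_i-1} + \cdots + f_i^{m_i-1}L$; the integral equation gives $f_i W_i \subset L W_i$, and then $W=\prod_i W_i$ satisfies $\overline{L}\,W = L\,W$, which is the equivalence you need. With this replacement, the rest of your argument goes through.
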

{A subspace $L \in \K(X)$ is called {\it complete} if $L = \overline{L}$. 
If $L$ and $M$ are complete subspaces, then $LM$ is not necessarily
complete. For two complete subspaces $L, M \in \K(X)$,
define $$L * M = \overline{LM}.$$ The collection of complete
subspaces together with $*$ is a semigroup with the cancellation
property. Theorem \ref{th-int-closure-subspace} in fact shows that $L \mapsto
\overline{L}$ gives an isomorphism between the quotient semigroup
$\K / \sim$ and the semigroup of complete subspaces (with $*$).}


Finally, one has an analogue of the Hodge inequality for intersection index of subspaces on irreducible surfaces. 
Using the theory of Newton-Okounkov bodies one can reduce it to the classical isoperimetric inequality for convex bodies in 
the Euclidean plane $\r^2$ (see \cite{KKh-Annals}). This inequality easily implies the usual Hodge inequality 
for intersection numbers of curves on projective surfaces.

\begin{Th}[A version of Hodge inequality] \label{th-Hodge}
Let $X$ be an irreducible surface. Let $L, M$ be finite dimensional subspaces of rational functions on $X$. Then we have
\begin{equation} \label{equ-Hodge}
[L, L][M, M] \leq [L, M]^2.
\end{equation}
\end{Th}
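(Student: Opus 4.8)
The plan is to reduce the inequality \eqref{equ-Hodge} to a statement about convex bodies via the theory of Newton--Okounkov bodies, exactly as indicated in the paragraph preceding the theorem. First I would choose a faithful valuation $v : \k(X)^* \to \Z^2$ on the field of rational functions of the irreducible surface $X$, for instance one coming from a flag of subvarieties $X \supset C \supset \{pt\}$ near a smooth point; the key property is that the valuation semigroup of a nonzero finite dimensional subspace $L \subset \k(X)$ has, after translating off its poles, a well-behaved asymptotic behaviour. To each $L \in \K(X)$ one associates its Newton--Okounkov body $\Delta(L) \subset \R^2$, defined as the closure of the convex hull of $\bigcup_{k \geq 1} \frac{1}{k}\, v(L^k \setminus \{0\})$. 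Two facts from \cite{KKh-Annals} (and \cite{KKh-MMJ, KKh-CMB}) drive the argument: (i) $\Delta$ is additive in the sense that $\Delta(LM) = \Delta(L) + \Delta(M)$ (Minkowski sum), and (ii) the self-intersection index is recovered as normalized volume, $[L,L] = 2!\,\vol_2(\Delta(L))$, and more generally $[L,M]$ equals the mixed volume $2!\,V(\Delta(L),\Delta(M))$. (Some care is needed because $\Delta(L)$ may be unbounded in the directions dual to the poles of $L$; one works with the "body" relative to a reference subspace, or equivalently passes to the Grothendieck group $\G(X)$ and uses virtual convex bodies, which is harmless for the purposes of the inequality.)

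Granting (i) and (ii), the desired inequality $[L,L][M,M] \leq [L,M]^2$ becomes precisely
$$\vol_2(\Delta(L))\,\vol_2(\Delta(M)) \leq V(\Delta(L), \Delta(M))^2,$$
which is the classical isoperimetric (equivalently, two-dimensional Alexandrov--Fenchel) inequality for convex bodies in the Euclidean plane: for convex bodies $K_1, K_2 \subset \R^2$ one has $V(K_1,K_1)\,V(K_2,K_2) \leq V(K_1,K_2)^2$. This is standard — it follows, say, from the Brunn--Minkowski inequality $\vol_2(K_1 + K_2)^{1/2} \geq \vol_2(K_1)^{1/2} + \vol_2(K_2)^{1/2}$ by expanding $\vol_2(\lambda K_1 + K_2)$ as a quadratic polynomial in $\lambda$ and examining its discriminant. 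Thus once the dictionary between subspaces and convex bodies is in place, the surface Hodge inequality is immediate.

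The main obstacle is setting up (i) and (ii) cleanly in the birational setting of $\K(X)$ rather than for sections of an ample line bundle on a fixed projective surface: one must verify that the Newton--Okounkov construction descends to the Grothendieck group $\G(X)$, that it is compatible with the equivalence $\sim$ (so that $L \sim M$ implies $\Delta(L) = \Delta(M)$ — this should follow from Theorem \ref{th-int-closure-subspace}, since integral elements do not enlarge the valuation semigroup asymptotically), and that the volume formula holds with the poles properly accounted for. An alternative, and perhaps the cleanest route to cite, is to invoke Proposition \ref{prop-int-index-degree}: replacing $L$ by a large power $L^N$ we may assume the Kodaira map $\Phi_{L}$ is birational onto its image $Y_L \subset \p(L^*)$, so $[L,L]$ and $[L,M]$ become honest intersection numbers of divisor classes on a projective surface, at which point the classical Hodge index theorem (or its convex-geometric refinement via Newton--Okounkov bodies as in \cite{KKh-Annals}) applies directly, and one rescales back by multi-additivity (Theorem \ref{th-int-index-multiadd}). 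Either way, the essential input is one-dimensional-plus-convexity: the quadratic form $(L,M) \mapsto [L,M]$ on a two-dimensional variety has signature $(+,-,\ldots)$, which is exactly the isoperimetric inequality in disguise.
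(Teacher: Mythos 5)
Your main route---reducing \eqref{equ-Hodge} to the planar isoperimetric inequality via Newton--Okounkov bodies---is exactly the one the paper indicates (it gives no proof of its own and simply cites \cite{KKh-Annals}). However, the two ``facts'' you invoke are both stated as equalities when only one-sided relations hold, and this matters for the logic as you have written it. The Newton--Okounkov body is only \emph{superadditive}: $\Delta(L) + \Delta(M) \subseteq \Delta(LM)$, and the inclusion can be strict, because the valuation of a linear combination of products from $L^k M^k$ need not lie in $v(L^k) + v(M^k)$. Consequently your fact (ii) in its mixed form, $[L,M] = 2!\,V(\Delta(L),\Delta(M))$, is also not true in general. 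What \cite{KKh-Annals} actually provides is the self-intersection volume formula $[L,L] = 2!\,\vol_2(\Delta(L))$ together with the superadditivity inclusion, and these two combined with multi-additivity (Theorem~\ref{th-int-index-multiadd}) give only the inequality
\begin{equation*}
[L,M] \;\geq\; 2!\,V\bigl(\Delta(L),\Delta(M)\bigr).
\end{equation*}
Fortunately this is the direction you need: together with the isoperimetric inequality $V(\Delta(L),\Delta(M))^2 \geq \vol_2(\Delta(L))\vol_2(\Delta(M))$ it yields $[L,M]^2 \geq (2!)^2\vol_2(\Delta(L))\vol_2(\Delta(M)) = [L,L][M,M]$, so your conclusion stands once (i) and (ii) are replaced by the correct inclusion and inequality. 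The cleaner packaging of the same argument, and the one actually carried out in \cite{KKh-Annals}, is to derive the Brunn--Minkowski inequality $[LM,LM]^{1/2} \geq [L,L]^{1/2} + [M,M]^{1/2}$ from superadditivity plus the volume formula, and then expand $[LM,LM] = [L,L] + 2[L,M] + [M,M]$ to recover \eqref{equ-Hodge}. Your alternative route through Proposition~\ref{prop-int-index-degree} and the classical Hodge index theorem is also viable, but needs more care than you indicate: one must realize $L$ and $M$ simultaneously as (nef) divisor classes on a single projective surface, e.g.\ by passing to the Kodaira image of a high power $(LM)^N$ and pulling both classes back, before the classical theorem applies.
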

In Section \ref{sec-AF-ideal} we will use this Hodge inequality to prove an Alexandrov-Fenchel inequality for mixed multiplicities of 
$\m$-primary ideals in the local ring of a point on an algebraic variety.

\section{Semigroup of $\m$-primary ideals and subspaces of rational functions} 
\label{sec-semigp-ideal-subspace}
{Analogous to the Grothendieck group of the semigroup of subspaces $\K(X)$ (Section \ref{sec-int-index}) in this section we consider the Grothendieck group of $\m$-primary ideals. 
Let $R = \mathcal{O}_{X, a}$ be the local ring of a point $a$ on a (possibly reducible) variety $X$ of pure dimension $n$ (i.e. all its irreducible components have dimension $n$). Let $\m$ be the unique maximal ideal of $R$. Recall that an ideal $I$ is $\m$-primary if it contains a power of the maximal ideal $\m$. 

Consider the collection $\K_{ideal}(R)$ of all the $\m$-primary ideals in $R$. The set $\K_{ideal}(R)$ is a semigroup with respect to product of ideals. As for any commutative semigroup, we say that $I, J \in \K_{ideal}(R)$ are equivalent, denoted $I \sim J$, if there is $M \in \K_{ideal}(R)$ with $IM = JM$. We denote the Grothendieck group of $\K_{ideal}(X)$ by $\G_{ideal}(X)$.


\begin{Rem} \label{rem-int-closed-ideal}
{Let us consider the case where $X$ is irreducible and hence $R$ is an integral domain (analogous statements to statements below may not hold if $R$ is not an integral domain).
Given an ideal $I \in \K_{ideal}(R)$ and $f \in R$ we say that $f$ is {\it integral over $I$} if there exist an integer $m$ and elements $a_i \in I^i, i = 1, \ldots, m$, such that
$$f^m +a_1f^{m-1} +a_2 f^{m-2} + \cdots +a_{m-1}f+ a_m = 0.$$
It is a standard fact from commutative algebra that the set of all elements in $R$ which are integral over $I$ is an ideal called the {\it integral closure of $I$} and denoted $\overline{I}$ (see \cite[Appendix 4]{SZ} and \cite[Chapter 1]{SH}). Clearly if $I$ is $\m$-primary then $\overline{I}$ is also $\m$-primary. 
Similar to Theorem \ref{th-int-closure-subspace} one can give the following characterization of equivalence of elements in the commutative semigroup $\K_{ideal}(R)$ in terms of integral closure of ideals:
For $I \in \K_{ideal}(R)$, the completion $I$ is the largest $\m$-primary ideal which is equivalent to $I$: That is, (1) $I \sim \overline{I}$ and (2) if for $J \in \K_{ideal}(R)$ we have $I \sim J$ then $J \subset \overline{I}$.

Let us call an $\m$-primary ideal $I$ {\it integrally closed} if $I = \overline{I}$. If $I, J$ are integrally closed $\m$-primary ideals then $IJ$ is not necessarily integrally closed. For two integrally closed $I, J \in \K_{ideal}(R)$ define: $$I * J = \overline{IJ}.$$
The collection of integrally closed $\m$-primary ideals together with $*$ is a semigroup with the cancellation property. The above shows that $I \mapsto \overline{I}$ gives an isomorphism between the quotient semigroup $\K_{ideal}(R) / \sim$ and the semigroup of integrally closed $\m$-primary ideals (with $*$).}
\end{Rem}

Lets go back to the general case where $X$ is possibly reducible.
Without loss of generality assume that $X$ is affine and embedded in an affine space $\A^N$ and $a=o$ is the origin. 
We now show that the Grothendieck group $\G_{ideal}(R)$ of $\m$-primary ideals in $R$ can be naturally mapped into the Grothendieck group $\G(X)$ of finite dimensional subspaces.

For each $m > 0$ let $\AA^{(m)}$ and $\PP^{(m)}$ 
denote the vector spaces of polynomials in $\AA = \k[x_1, \ldots, x_N]$ of degree less than or equal to $m$, and the vector space of 
homogeneous polynomials of degree $m$ respectively.  
Also let $A^{(m)}$ (respectively $P^{(m)}$) denote the set of all $f \in R$ which are restriction of a polynomial of 
degree at most $m$ (respectively degree equal to $m$). 

\begin{Def} \label{def-I^(m)}
{For an $\m$-primary ideal $I \subset R$ put $I^{(m)} = A^{(m)} \cap I$. The subspace $I^{(m)}$ represents an element of $\K(X)$ which we also denote by $I^{(m)}$.}
\end{Def}

\begin{Prop} \label{prop-dim-R/I}
Let $I$ be an $\m$-primary ideal. Suppose $r > 0$ is such that $\m^r \subset I$. Then for any $m \geq r$ we have:
$$\dim_\k(R/I) = \dim_\k(A^{(m)} / I^{(m)}).$$
\end{Prop}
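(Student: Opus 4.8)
The plan is to show that the natural $\k$-linear map $A^{(m)}/I^{(m)} \to R/I$, induced by inclusion $A^{(m)} \hookrightarrow R$ followed by the quotient $R \to R/I$, is an isomorphism whenever $m \geq r$, where $\m^r \subset I$. Injectivity is immediate: if $f \in A^{(m)}$ maps to $0$ in $R/I$, then $f \in I$, hence $f \in A^{(m)} \cap I = I^{(m)}$, so $f$ is $0$ in $A^{(m)}/I^{(m)}$. The content is in surjectivity, i.e. every class in $R/I$ is represented by the restriction of a polynomial of degree at most $m$.

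First I would record the filtration of $R$ by the subspaces $A^{(m)}$. Since $X \subset \A^N$ is affine and $a = o$ is the origin, $R = \mathcal{O}_{X,o}$ is the localization at $\m$ of the coordinate ring $\k[X] = \AA/\I(X)$, and the image of $\AA^{(m)}$ in $R$ is exactly $A^{(m)}$; moreover $\bigcup_m A^{(m)}$ generates $R$ as a ring and, more to the point, the images of the $A^{(m)}$ exhaust $R$ modulo any $\m$-primary ideal because $\m$ itself is generated by (images of) linear forms vanishing at the origin, so $\m^k$ is spanned by images of polynomials of degree $\geq k$ — in particular $\m^k \subset A^{(k)} + \m^{k+1}$ is not quite what I want; rather the cleaner statement is that the image of $A^{(k)}$ in $R$ contains a set of generators of $\m^k$ as an ideal, hence $\m^k = A^{(k)}\cdot R$-span modulo higher order. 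The key elementary fact I would isolate and prove is: for every $k \geq 0$, $R = A^{(k)} + \m^{k+1}$ as $\k$-vector spaces (equivalently, the composite $A^{(k)} \to R \to R/\m^{k+1}$ is surjective). This follows by induction on $k$: $A^{(0)} = \k$ surjects onto $R/\m$, and given $R = A^{(k)} + \m^{k+1}$, any element of $\m^{k+1}$ is a sum of products $\ell_1 \cdots \ell_{k+1} g$ with $\ell_i$ linear forms vanishing at $o$ and $g \in R$; writing $g = g_0 + g'$ with $g_0 \in A^{(0)}=\k$ and $g' \in \m$ shows $\m^{k+1} \subset A^{(k+1)} + \m^{k+2}$, giving $R = A^{(k+1)} + \m^{k+2}$.

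With that in hand, surjectivity of $A^{(m)}/I^{(m)} \to R/I$ for $m \geq r$ is immediate: given any $h \in R$, write $h = f + g$ with $f \in A^{(m)}$ and $g \in \m^{m+1}$; since $m + 1 > m \geq r$ we have $\m^{m+1} \subset \m^r \subset I$, so $g \in I$ and $h \equiv f \pmod I$. Thus every class in $R/I$ is hit by $A^{(m)}$, proving surjectivity, and combined with injectivity we get the isomorphism $A^{(m)}/I^{(m)} \cong R/I$ and hence the equality of dimensions. (One should note $\dim_\k R/I < \infty$ since $I \supset \m^r$ and $R$ is Noetherian local of dimension $n$, so all the quantities in sight are finite; this also guarantees $A^{(m)}/I^{(m)}$ is finite-dimensional even though $A^{(m)}$ itself may be infinite-dimensional as $N$ grows — actually $A^{(m)}$ is finite-dimensional since it is a quotient of $\AA^{(m)}$, so no subtlety here.)

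I do not anticipate a genuine obstacle; the only point requiring care is the reduction lemma $R = A^{(k)} + \m^{k+1}$, and specifically making sure the induction step correctly uses that $\m$ is generated by restrictions of linear forms vanishing at the origin — which holds because the embedding $X \subset \A^N$ sends $a$ to $o$, so the maximal ideal $\m \subset R$ is the extension of $(x_1,\ldots,x_N)$ from $\AA$. If one wanted to avoid even this, one could instead argue directly: the images of $\AA^{(m)}$ in $R/\m^{m+1}$ surject because the surjection $\AA \to \k[X] \to R/\m^{m+1}$ is a map onto a ring generated in degrees $\leq 1$ and killing everything of degree $> m$ in the relevant graded sense, but the inductive argument above is the most transparent. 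Everything else is routine linear algebra.
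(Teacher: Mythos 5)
Your proof is correct and follows essentially the same route as the paper: both exhibit the natural $\k$-linear map $A^{(m)} \to R/I$, show it is surjective by observing that $A^{(m)}$ surjects onto $R/\m^m$ (equivalently $R = A^{(m)} + \m^{m+1}$) and that $\m^m \subset I$, and identify the kernel as $I^{(m)} = A^{(m)} \cap I$. The only difference is that the paper asserts the spanning of $R/\m^m$ by the image of $A^{(m)}$ as obvious, whereas you supply the inductive verification that $R = A^{(k)} + \m^{k+1}$ using that $\m$ is generated by restrictions of the coordinate linear forms; this is a correct and welcome filling-in of detail, not a different argument.
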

\begin{proof}
The inclusion $A^{(m)} \subset R$ gives $\k$-linear maps from $A^{(m)}$ to $R/\m^m$ and to $R/I$. Note that the image of $A^{(m)}$ in the quotient space $R/ \m^m$ spans $R / \m^m$. It follows that the $\k$-linear map $A^{(m)} \to R/ \m^m \to R / I$ is surjective. Clearly the kernel of this map is $I^{(m)} = A^{(m)} \cap I$. Thus $A^{(m)} / I^{(m)} \cong R/I$ as 
vector spaces. This finishes the proof.
\end{proof}

The following lemma relates the product of $\m$-primary ideals with the product of subspaces. 
\begin{Lem} \label{lem-IJ}
Let $I, J$ be $\m$-primary ideals. Suppose $r_1, r_2 > 0$ are such that $\m^{r_1} \subset I$ and 
$\m^{r_2} \subset J$. Then for $p, q \geq r_1 + r_2$ we have: 
$$I^{(p)} J^{(q)} = (IJ)^{(p+q)}.$$
\end{Lem}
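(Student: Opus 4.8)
The plan is to prove the two inclusions separately. The inclusion $I^{(p)}J^{(q)} \subseteq (IJ)^{(p+q)}$ is immediate and uses nothing about $p,q$: if $f\in I^{(p)}$ and $g\in J^{(q)}$ then $fg\in IJ$ is the restriction of a polynomial of degree at most $p+q$, so $fg\in(IJ)^{(p+q)}$, and $(IJ)^{(p+q)}$ is a linear subspace. So the substance is the reverse inclusion $(IJ)^{(p+q)}\subseteq I^{(p)}J^{(q)}$.

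For that, I would first replace the local ring $R=\mathcal{O}_{X,o}$ by the affine coordinate ring $\mathcal{O}(X)$, where every element is an honest polynomial restriction and truncation by degree makes sense. Since $I$ is $\m$-primary with $\m^{r_1}\subseteq I$, one may choose an $\mathfrak{m}_o$-primary ideal $\mathfrak{a}\subseteq\mathcal{O}(X)$ (here $\mathfrak{m}_o$ is the maximal ideal of $\mathcal{O}(X)$ at $o$) with $\mathfrak{a}R=I$ and $\mathfrak{m}_o^{r_1}\subseteq\mathfrak{a}$ — for instance $\mathfrak{a}=\mathfrak{m}_o^{r_1}+(I\cap A^{(r_1)})$ — and then $\mathfrak{a}\cap A^{(m)}=I^{(m)}$ for all $m$ (because $\mathfrak{a}$ is saturated at $\mathfrak{m}_o$), and similarly for $J$ and for $IJ$; so the identity to prove becomes $(\mathfrak{a}\cap A^{(p)})(\mathfrak{b}\cap A^{(q)})=\mathfrak{a}\mathfrak{b}\cap A^{(p+q)}$ inside $\mathcal{O}(X)$. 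Writing an element of $\mathfrak{a}$ as a polynomial restriction and discarding the terms of degree $>r_1$ (which restrict into $\mathfrak{m}_o^{r_1}\subseteq\mathfrak{a}$) gives $\mathfrak{a}=(\mathfrak{a}\cap A^{(r_1)})+\mathfrak{m}_o^{r_1}$, and since every monomial of degree $r_1$ restricts into $\mathfrak{a}\cap A^{(r_1)}$ one gets $\mathfrak{a}=(\mathfrak{a}\cap A^{(r_1)})\,\mathcal{O}(X)$; hence $\mathfrak{a}\mathfrak{b}=E\,\mathcal{O}(X)$, where $E:=(\mathfrak{a}\cap A^{(r_1)})(\mathfrak{b}\cap A^{(r_2)})$ is a finite-dimensional subspace contained in $A^{(r_1+r_2)}\cap I^{(p)}J^{(q)}$ (and in $\mathfrak{m}_o^{2}$).

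Now given $h\in\mathfrak{a}\mathfrak{b}\cap A^{(p+q)}$, write $h$ as the restriction of a polynomial $Q$ of degree $\le p+q$ and peel off the part of $Q$ of degree in $[\,r_1+r_2,\,p+q\,]$: each monomial $x^{\gamma}$ with $r_1+r_2\le|\gamma|\le p+q$ factors as $x^{\delta}x^{\varepsilon}$ with $|\delta|\in[r_1,p]$, $|\varepsilon|\in[r_2,q]$ — this is exactly where the hypothesis $p,q\ge r_1+r_2$ is used — so that $x^{\delta}|_{X}\in I^{(p)}$, $x^{\varepsilon}|_{X}\in J^{(q)}$ and the peeled part of $h$ lies in $I^{(p)}J^{(q)}$; since it also lies in $\mathfrak{m}_o^{r_1+r_2}\subseteq\mathfrak{a}\mathfrak{b}$, we are reduced, modulo $I^{(p)}J^{(q)}$, to an element $h_{0}\in\mathfrak{a}\mathfrak{b}\cap A^{(r_1+r_2-1)}$. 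The remaining task is the ``low-degree residual'' $\mathfrak{a}\mathfrak{b}\cap A^{(r_1+r_2-1)}\subseteq I^{(p)}J^{(q)}$, and this is the main obstacle. Here the point of working in $\mathcal{O}(X)$ is that such an $h_{0}$ can be written $h_{0}=\sum_{k}e_{k}c_{k}$ with $e_{k}\in E$ and the $c_{k}$ honest polynomial restrictions; replacing each $c_{k}$ by the restriction of its degree $\le q-r_2$ truncation splits off a summand in $E\cdot A^{(q-r_2)}\subseteq I^{(p)}J^{(q)}$ and leaves a remainder in $\mathfrak{a}\mathfrak{b}\cap A^{(r_1+q)}$ pushed into a higher power of $\mathfrak{m}_o$ (and one peels its high-degree part again whenever its $\mathfrak{m}_o$-adic order reaches $r_1+r_2$). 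One then finishes by observing that $(IJ)^{(p+q)}$ is a finite-dimensional subspace of $A^{(p+q)}$, so the descending chain $(IJ)^{(p+q)}\cap\mathfrak{m}_o^{t}$ stabilizes and its stable value is contained in $\bigcap_{t}\mathfrak{m}_o^{t}=0$ by Krull's intersection theorem. The delicate part — and the reason one wants $p,q\ge r_1+r_2$ rather than merely $p\ge r_1$, $q\ge r_2$ — is that on a singular $X$ a low-degree polynomial can restrict into a high power of $\m$, so $\mathfrak{a}\mathfrak{b}\cap A^{(r_1+r_2-1)}$ need not vanish and the degree-versus-order bookkeeping in the last step has to be arranged so that it terminates.
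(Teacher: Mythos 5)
Your forward inclusion, the reduction to the coordinate ring $\k[X]$ with representative $\m_o$-primary ideals $\mathfrak{a}=I\cap\k[X]$, $\mathfrak{b}=J\cap\k[X]$ (the equality $\mathfrak{a}\cap A^{(m)}=I^{(m)}$ does correctly rest on $\mathfrak{a}$ being $\m_o$-primary), and the peeling of the degree-$[r_1+r_2,\,p+q]$ part of $h$ via the factorization $x^{\gamma}=x^{\delta}x^{\varepsilon}$ with $|\delta|\in[r_1,p]$, $|\varepsilon|\in[r_2,q]$ are all sound; the last of these is exactly where $p,q\ge r_1+r_2$ is used, and it reduces the problem to $\mathfrak{a}\mathfrak{b}\cap A^{(r_1+r_2-1)}\subseteq I^{(p)}J^{(q)}$, which you yourself identify as ``the main obstacle.'' The gap is that the iterative truncate-and-peel scheme you sketch for this residual does not, as described, terminate. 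After truncating the $c_k$ at degree $q-r_2$ the remainder $h_1=\sum_k e_kc_k''$ does have $\m_o$-order at least $q-r_2+3$, but its degree as a polynomial restriction is only bounded by $r_1+q$, not $r_1+r_2-1$. To get back to $A^{(r_1+r_2-1)}$ you must peel off the degree-$\ge r_1+r_2$ part $\gamma$ of $h_1$; but $\gamma$ is only known to lie in $\m_o^{r_1+r_2}$, so the new residual $h_1-\gamma$ has $\m_o$-order bounded below merely by $\min(q-r_2+3,\,r_1+r_2)=r_1+r_2$. You are thus back in $\mathfrak{a}\mathfrak{b}\cap A^{(r_1+r_2-1)}\cap\m_o^{r_1+r_2}$ with no mechanism to push the order any higher, and repeating the cycle gains nothing further: the order never climbs to the threshold $t$ past which $(IJ)^{(p+q)}\cap\m_o^{t}=0$, so the Krull argument never engages. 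Acknowledging that ``the degree-versus-order bookkeeping\dots has to be arranged so that it terminates'' is not the same as arranging it, and the scheme you propose does not in fact achieve termination.

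Beyond the gap, your route is also genuinely different from the paper's. The paper does not pass to $\k[X]$, does not choose representative ideals, and does not iterate or invoke Krull. It instead writes $I^{(p)}=I^{(r_1-1)}+\sum_{r_1\le d\le p}P^{(d)}$ and $J^{(q)}=J^{(r_2-1)}+\sum_{r_2\le d\le q}P^{(d)}$, where $P^{(d)}$ is the space of restrictions of homogeneous degree-$d$ polynomials, multiplies these out to obtain $I^{(p)}J^{(q)}=L+\sum_{r_1+r_2\le m\le p+q}P^{(m)}$ with $L$ a subspace of $A^{(r_1+r_2-1)}$, argues that $L$ is insensitive to increasing $p$ or $q$ because the added pieces are restrictions of polynomials of degree $\ge r_1+r_2$, and matches this against the degree decomposition $(IJ)^{(p+q)}=(IJ)^{(r_1+r_2-1)}+\sum_{r_1+r_2\le m\le p+q}P^{(m)}$. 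That argument is short and purely a degree computation. Your proposal is substantially more machinery for the same lemma, and the extra machinery is precisely what leaves the residual step unfinished.
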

\begin{proof}
The subspace $I^{(p)}$ is the sum of the subspace $I^{(r_1-1)}$ (consisting of restrictions of polynomials of degree $< r_1$ in $I$) and the sum $\sum_{r_1 \leq d \leq p}P^{(d)}$ (of the subspaces $P^{(d)}$ consisting of restrictions of homogeneous polynomials of degree $d$). Similarly $J^{(q)} = J^{(r_2-1)} +
\sum_{r_2 \leq k \leq p} P^{(k)}$. Thus $I^{(p)}J^{(q)}$ contains the sum of the subspaces $P^{(m)}$ of restrictions of homogeneous polynomials of degree $m$ where $r_1 + r_2 \leq m \leq p+q$ and some subspace $L$ of restrictions of polynomials of degree $< r_1 + r_2$. Note that if we increase $p$ or $q$ the space $L$ does not change as this only adds restrictions of some polynomials of degree not smaller than $r_1 + r_2$ to the product. 
It follows that the space $(IJ)^{(p+q)}$ also is equal to $L + \sum_{r_1 + r_2 \leq k \leq p + q} P^{(m)}$ which proves the lemma.
\end{proof}

As in Section \ref{sec-int-index} let $\G(X)$ denote the Grothendieck group of finite dimensional subspaces of rational functions on $X$ 
(see Definition \ref{def-G(X)}). The following readily follow from Lemma \ref{lem-IJ}.
\begin{Cor} \label{cor-k-m-large}
Let $I$ be an $\m$-primary ideal in the local ring $R$. Suppose $r>0$ is such that $\m^r \subset I$. Then for $k, m \geq r$ we have:
$$I^{(m)} / A^{(m)} = I^{(k)} / A^{(k)},$$ as elements of the Grothendieck group $\G(X)$.
\end{Cor}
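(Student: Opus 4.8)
The plan is to reduce the statement to an equality of honest subspaces of $\k(X)$. Recall that, by the construction of the Grothendieck group, two formal quotients $x/y$ and $w/z$ of elements of $\K(X)$ represent the same element of $\G(X)$ as soon as $xz = yw$ (indeed, already when $xz \sim yw$). Hence it suffices to prove that
$$I^{(m)} A^{(k)} = I^{(k)} A^{(m)}$$
as subspaces of $\k(X)$ for all $m, k \geq r$; since the product on $\K(X)$ is the span of products of functions, computed componentwise, this is equivalent to an equality of the corresponding elements of $\K(X)$, and then taking $x = I^{(m)}$, $y = A^{(m)}$, $w = I^{(k)}$, $z = A^{(k)}$ yields exactly $I^{(m)}/A^{(m)} = I^{(k)}/A^{(k)}$ in $\G(X)$.

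To establish the displayed equality I would apply Lemma \ref{lem-IJ} with the given ideal $I$ and with $J = R$ the unit ideal, observing that $R^{(q)} = A^{(q)}$ and that $R$ contains every power of $\m$, so the proof of the lemma applies (with the role of $r_2$ played by $0$) and gives $I^{(m)} A^{(k)} = (IR)^{(m+k)} = I^{(m+k)}$, and symmetrically $I^{(k)} A^{(m)} = I^{(m+k)}$; combining these proves the claim. Concretely, this is the computation already contained in the proof of Lemma \ref{lem-IJ}: for $m \geq r$ the subspace $I^{(m)}$ is the sum of the fixed subspace $I^{(r-1)}$ of restrictions of polynomials of $I$ of degree $< r$ and the subspaces $P^{(d)}$ of restrictions of homogeneous polynomials of degree $d$ for $r \leq d \leq m$, and likewise $A^{(k)} = A^{(r-1)} + \sum_{r \leq e \leq k} P^{(e)}$; multiplying these out and using $P^{(d)} P^{(e)} = P^{(d+e)}$ together with $I^{(r-1)}, A^{(r-1)} \subseteq A^{(r-1)}$, all cross terms collapse into the high-degree part and one obtains
$$I^{(m)} A^{(k)} = I^{(r-1)} A^{(r-1)} + \sum_{r \leq s \leq m+k} P^{(s)},$$
which is manifestly symmetric in $m$ and $k$ (and, incidentally, equals $I^{(m+k)}$).

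I do not expect a genuine obstacle here; the statement is a bookkeeping consequence of Lemma \ref{lem-IJ}. The only points needing a little care are: (i) Lemma \ref{lem-IJ} is phrased for $\m$-primary ideals, so one must be comfortable invoking it with $J = R$, which is harmless since its proof only uses that $P^{(d)} \subseteq J$ for all sufficiently large $d$, true here for every $d \geq 0$; (ii) tracking the degree bounds so that the conclusion holds for every $m, k \geq r$ and not merely for $m, k$ sufficiently large — this is exactly what the explicit decomposition above secures; and (iii) noting that an equality of subspaces of the function algebra $\k(X)$ descends to an equality in the semigroup $\K(X)$, so that the resulting identity of formal quotients in $\G(X)$ is legitimate and no completion or equivalence subtleties enter.
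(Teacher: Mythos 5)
Your proposal is correct and follows the route the paper intends: it reduces the equality in $\G(X)$ to the cross-product identity $I^{(m)}A^{(k)} = I^{(k)}A^{(m)}$ and then deduces this from Lemma \ref{lem-IJ} (the paper simply says the corollary ``readily follows'' from that lemma). Your remarks about applying the lemma with $J=R$ --- or, equivalently, the explicit decomposition $I^{(m)}A^{(k)} = I^{(r-1)}A^{(r-1)} + \sum_{r \le s \le m+k} P^{(s)}$, which is manifestly symmetric in $m$ and $k$ --- correctly supply the bookkeeping the paper leaves implicit.
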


\begin{Cor} \label{cor-int-index-I/A}
Let $I_1, \ldots, I_n$ be $\m$-primary ideals in the local ring $R$. Then for sufficiently large integers $k_1, \ldots, k_n$ and $m$ we have 
$$[I_1^{(k_1)} / A^{(k_1)}, \ldots, I_n^{(k_n)} / A^{(k_n)}] = [I_1^{(m)} / A^{(m)}, \ldots, I_n^{(m)} / A^{(m)}],$$
where $[\cdot, \ldots, \cdot]$ denotes the intersection index in the Grothendieck group $\G(X)$ (see Section \ref{sec-int-index}). 
\end{Cor}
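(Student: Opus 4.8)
The plan is to reduce this statement immediately to Corollary \ref{cor-k-m-large} together with the fact, recorded in Definition \ref{def-G(X)}, that the intersection index is a well-defined multi-additive function on the Grothendieck group $\G(X)$. The only real content of the corollary is the observation that the ``sufficiently large'' thresholds for the individual arguments can be chosen uniformly, so that a single $m$ works for all of them simultaneously.

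First I would, for each $i = 1, \ldots, n$, fix an integer $r_i > 0$ with $\m^{r_i} \subset I_i$; such an $r_i$ exists precisely because $I_i$ is $\m$-primary. Set $r = \max_i r_i$. By Corollary \ref{cor-k-m-large}, applied to the ideal $I_i$ with its integer $r_i$, for any integers $k_i \geq r_i$ and $m \geq r_i$ one has the equality of elements of $\G(X)$
$$I_i^{(k_i)} / A^{(k_i)} = I_i^{(m)} / A^{(m)}.$$
In particular, whenever $k_1, \ldots, k_n \geq r$ and $m \geq r$, the $i$-th argument of the intersection index on the left-hand side of the asserted identity represents the same element of $\G(X)$ as the $i$-th argument on the right-hand side, for every $i$.

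Finally I would invoke that, by Theorem \ref{th-int-index-multiadd} and Definition \ref{def-G(X)}, the intersection index is defined on $n$-tuples of elements of $\G(X)$; hence replacing each argument by an element of $\G(X)$ equal to it does not change the value. Applying this coordinate by coordinate yields
$$[I_1^{(k_1)} / A^{(k_1)}, \ldots, I_n^{(k_n)} / A^{(k_n)}] = [I_1^{(m)} / A^{(m)}, \ldots, I_n^{(m)} / A^{(m)}],$$
which is the claim. I do not anticipate any genuine obstacle: all the work has already been done in Lemma \ref{lem-IJ} and Corollary \ref{cor-k-m-large}, and in the prior fact that the intersection index factors through $\G(X)$; the present corollary is essentially a bookkeeping statement, but it is the one that will make the eventual definition of mixed multiplicity in terms of the intersection index independent of the auxiliary parameter $m$.
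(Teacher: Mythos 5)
Your argument is correct and is exactly the intended one: the paper presents this corollary (together with Corollary \ref{cor-k-m-large}) as following readily from Lemma \ref{lem-IJ}, and your reduction to Corollary \ref{cor-k-m-large} plus the fact that the intersection index is a well-defined function on $\G(X)$ (Definition \ref{def-G(X)}) is precisely that derivation. The only observation you needed to supply — choosing a single threshold $r = \max_i r_i$ that works for all $I_i$ simultaneously — is handled correctly.
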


Finally we have the following natural homomorphism from the Grothendieck group of $\m$-primary ideals
into the Grothendieck group $\G(X)$ of subspaces of rational functions.
Let $I \in \K_{ideal}(R)$ and let $r > 0$ be such that $\m^r \subset I$ and let $m \geq r$.
\begin{Th} \label{Groth-homomorphism}
The map $\iota: I \mapsto I^{(m)} / A^{(m)}$ gives a homomorphism of groups $\iota: \G_{ideal}(R) \to \G(X)$. Moreover, the map $\iota$ is independent of the choice of $m \geq r$.
\end{Th}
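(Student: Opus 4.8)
The plan is to verify that $\iota$ is well-defined, multiplicative, and independent of $m$, and then invoke the universal property of the Grothendieck group. First I would settle well-definedness: given $I \in \K_{ideal}(R)$, by Corollary \ref{cor-k-m-large} the element $I^{(m)}/A^{(m)} \in \G(X)$ does not depend on the choice of $m \geq r$ (where $\m^r \subset I$), nor on the choice of such $r$ (a larger $r$ still satisfies the hypothesis, so the corollary applies with the common value $\max$). Hence $\iota$ as a map from $\K_{ideal}(R)$ to $\G(X)$ is unambiguous. One should also check that if $I \sim J$ in $\K_{ideal}(R)$—say $IM = JM$—then $\iota(I) = \iota(J)$; but this will follow automatically once multiplicativity is established, since $\iota(I)\iota(M) = \iota(IM) = \iota(JM) = \iota(J)\iota(M)$ in the \emph{group} $\G(X)$, where we may cancel $\iota(M)$. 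So the crux is multiplicativity on the semigroup level.

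Next I would prove that $\iota(IJ) = \iota(I)\iota(J)$, i.e. that $(IJ)^{(m)}/A^{(m)} = (I^{(m)}/A^{(m)}) \cdot (J^{(m)}/A^{(m)})$ in $\G(X)$. The tool is Lemma \ref{lem-IJ}: choosing $r_1, r_2$ with $\m^{r_1} \subset I$, $\m^{r_2} \subset J$, and taking $p = q = m \geq r_1 + r_2$, we get $I^{(m)}J^{(m)} = (IJ)^{(m+m)} = (IJ)^{(2m)}$ as subspaces, hence as elements of $\K(X)$. Therefore in $\G(X)$,
$$\iota(I)\iota(J) = \frac{I^{(m)}}{A^{(m)}} \cdot \frac{J^{(m)}}{A^{(m)}} = \frac{I^{(m)}J^{(m)}}{A^{(m)}A^{(m)}} = \frac{(IJ)^{(2m)}}{A^{(m)}A^{(m)}}.$$
Now I must identify the denominator: $A^{(m)}A^{(m)}$ versus $A^{(2m)}$. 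These are \emph{not} literally equal as subspaces—the product of two restrictions of degree-$\leq m$ polynomials is a restriction of a degree-$\leq 2m$ polynomial, but arbitrary degree-$\leq 2m$ polynomials need not factor—yet they are equivalent in $\K(X)$: applying Lemma \ref{lem-IJ} with $I = J = R$ (so $r_1 = r_2 = 0$, or take $r_1 = r_2 = 1$ to be safe, $\m^1 \subset R$) gives $R^{(m)}R^{(m)} = R^{(2m)}$, that is $A^{(m)}A^{(m)} = A^{(2m)}$. Combining, $\iota(I)\iota(J) = (IJ)^{(2m)}/A^{(2m)}$, and since $2m \geq r_1 + r_2$ works as an exponent for the $\m$-primary ideal $IJ$ (note $\m^{r_1+r_2} \subset IJ$), this equals $\iota(IJ)$ by the $m$-independence already noted. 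So $\iota: \K_{ideal}(R) \to \G(X)$ is a semigroup homomorphism into a group.

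Finally, by the universal property of the Grothendieck group stated in the paragraph before Definition \ref{def-G(X)}, any semigroup homomorphism from $\K_{ideal}(R)$ to an abelian group factors uniquely through $\G_{ideal}(R)$; applying this to $\iota$ yields the desired group homomorphism $\iota: \G_{ideal}(R) \to \G(X)$, and the independence of $m$ is inherited from the semigroup-level statement. The main obstacle I anticipate is the bookkeeping in the previous paragraph: one must be careful that the exponents stay in the valid ranges when invoking Lemma \ref{lem-IJ} (it requires \emph{both} exponents $\geq r_1 + r_2$, not just their relation to the individual $r_i$), and one must not conflate the literal subspace $A^{(m)}A^{(m)}$ with $A^{(2m)}$ but rather pass through $\K(X)$-equivalence. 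Everything else is a formal consequence of Corollary \ref{cor-k-m-large}, Lemma \ref{lem-IJ}, and the universal property.
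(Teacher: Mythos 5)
Your proof is correct and follows essentially the same route as the paper's (very terse) argument: Corollary \ref{cor-k-m-large} gives independence of $m$ and Lemma \ref{lem-IJ} gives multiplicativity, after which the universal property of the Grothendieck group does the rest. One small factual slip that does not affect the argument: you say $A^{(m)}A^{(m)}$ and $A^{(2m)}$ are ``not literally equal as subspaces'' because not every polynomial of degree $\leq 2m$ factors. But $A^{(m)}A^{(m)}$ is by definition the \emph{span} of products, not the set of products, and since every monomial of degree $\leq 2m$ factors as a product of two monomials of degree $\leq m$, and restriction to $X$ is an algebra map, one has $A^{(m)}A^{(m)} = A^{(2m)}$ literally (for $m \geq 1$). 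You reach the same conclusion via Lemma \ref{lem-IJ} anyway, so the proof stands; the extra caution is simply unnecessary.
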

\begin{proof}
By Corollary \ref{cor-k-m-large} the map is well-defined i.e. independent of $m \geq r$. Also by Lemma \ref{lem-IJ} it is a homomorphism.
\end{proof}

\section{Multiplicity of a system of equations at a root} \label{sec-multi-root}
Let $X$ be an algebraic variety of pure dimension $n$
and let $R = \mathcal{O}_{X, a}$ denote the local ring of $X$ at some point $a$. 

Let $f_1, \ldots, f_n \in R$ be regular functions at $a$ and assume $a$ is an isolated solution of the system
\begin{equation} \label{equ-system-f}
f_1(x) = \cdots = f_n(x) = 0.
\end{equation}
In this section we review the classical notion of {\it multiplicity} of $a$ as a root of the system \eqref{equ-system-f}, 
in other words, the {\it intersection multiplicity} of the hypersurfaces $H_i = \{f_i(x) = 0\}$, $i=1, \ldots, n$, 
at the point $a$.
Intuitively, the multiplicity of a root $a$ of a system \eqref{equ-system-f}
is the number of roots which are born around $a$ if we slightly perturb the system. 

When the ground field $\k$ is the field of complex numbers $\c$ one can give the following topological
definition for the multiplicity.
\begin{Def}[Topological definition of multiplicity] \label{def-multi-top}
Let $U$ be a sufficiently small neighborhood of $a$ (in the usual topology of $X$) 
with real analytic boundary $\partial U$
and consider the map $F: \partial U \to S^{2n-1}$ given by
$$F(x) = (\frac{f_1(x)}{|{\bf f}(x)|}, \ldots, \frac{f_n(x)}{|{\bf f}(x)|}),$$ where 
$|{\bf f}(x)| = (|f_1(x)|^2 + \cdots + |f_n(x)|^2)^{1/2}$. The 
{\it multiplicity} $e_a(f_1, \ldots, f_n)$ of the root $a$ of the system \eqref{equ-system-f} is equal to the 
mapping degree of $F$. 
\end{Def}

One knows that the multiplicty defined in Definition \ref{def-multi-top} satisfies 
the following properties.
\begin{itemize} 
\item[(i)] Let $M$ be an $n \times n$ matrix whose entries are regular functions at $a$ and put 
$(g_1, \ldots, g_n)^t = M (f_1, \ldots, f_n)^t$. 
Then $e_a(f_1, \ldots, f_n) = e_a(g_1, \ldots, g_n)$ if and only if $\det(M)(a) \neq 0$. 

\item[(ii)] Let $Y$ be a projective subvariety of dimension $n$ in a projective space $\p^N$. Let $H$ be a plane of codimension $n$ in $\p^N$ and assume that $Y \cap H$ is finite. Then the number of points in $Y \cap H$ counted  with multiplicity is equal to $\deg(Y)$.

\item[(iii)] Let $I_{\bf f}$ be the ideal generated by $f_1, \ldots, f_n$. Then 
\begin{equation} \label{equ-multi-system-Samuel}
e_a(f_1, \ldots, f_n) = n!~ \lim_{k \to \infty} \frac{\dim_\k (R / I_{\bf f}^k)}{k^n}.
\end{equation}
Moreover, if $a$ is a smooth point of $X$ then $e_o(f_1, \ldots, f_n) = \dim_{\k}(R / I_{\bf f})$.
\end{itemize}

For a general algebraically closed field $\k$, one takes the property (iii) as definition.
\begin{Def}[Algebraic definition of multiplicity of a system of equations] 
\label{def-multi-system}
One defines the {\it multiplicity $e_a(f_1, \ldots, f_n)$ 
of the system ${\bf f} = (f_1, \ldots, f_n)$ at $a$} by the formula \eqref{equ-multi-system-Samuel}.
If $a$ is a smooth point then the multiplicity can be defined as $\dim_{\k}(R/ I_{\bf f})$ (it is known that for a smooth point $a$ the two definitions coincide).
\end{Def}

The following is well-known.
\begin{Th} \label{th-properties-multi-system}
The multiplicity of a system of equations in Definition \ref{def-multi-system}
satisfies the properties (i) and (ii) above.
\end{Th}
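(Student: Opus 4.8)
The plan is to deduce both properties from the Hilbert--Samuel formula \eqref{equ-multi-system-Samuel} together with a few standard facts about multiplicities of $\m$-primary ideals. The key preliminary observation is that, by Definition \ref{def-multi-system}, the integer $e_a(f_1,\ldots,f_n)$ depends only on the ideal $I_{\bf f}=(f_1,\ldots,f_n)\subseteq R$ and is finite exactly when $I_{\bf f}$ is $\m$-primary (that is, exactly when $a$ is an isolated root), so (i) and (ii) both reduce to statements about such ideals. I also record, for use in (i), that an $\m$-primary ideal of $R$ generated by $n=\dim R$ elements is necessarily \emph{minimally} generated by them (an $\m$-primary ideal requires at least $\dim R$ generators by Krull's theorem), and that its fibre cone $\bigoplus_k I^k/\m I^k$, being a quotient of $\k[T_1,\ldots,T_n]$ of dimension $n$, must then equal $\k[T_1,\ldots,T_n]$; since every degree-one homogeneous system of parameters of a polynomial ring is a basis, such an ideal has no proper reduction.

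For property (i) the easy direction is ``$\Leftarrow$'': a square matrix $M$ over the local ring $R$ is invertible over $R$ iff $\det(M)$ is a unit, i.e. iff $\det(M)(a)\neq0$, and then ${\bf g}^{t}=M{\bf f}^{t}$, ${\bf f}^{t}=M^{-1}{\bf g}^{t}$ give $I_{\bf g}=I_{\bf f}$ and hence $e_a({\bf f})=e_a({\bf g})$. For the converse I would argue by contraposition: assume $\det(M)(a)=0$. Always $I_{\bf g}\subseteq I_{\bf f}$, so $e_a({\bf g})\ge e_a({\bf f})$ (with $e_a({\bf g})=+\infty$ when $I_{\bf g}$ is not $\m$-primary, in which case we are done). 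If $I_{\bf g}=I_{\bf f}$, then, since $f_1,\ldots,f_n$ minimally generate this ideal, the images of $g_1,\ldots,g_n$ and of $f_1,\ldots,f_n$ would be two bases of $I_{\bf f}/\m I_{\bf f}$ related by $M\bmod\m$, forcing $\det(M)(a)\neq0$ --- a contradiction; hence $I_{\bf g}\subsetneq I_{\bf f}$. Since $I_{\bf f}$ admits no proper reduction, $I_{\bf g}$ is not a reduction of $I_{\bf f}$, and Rees's multiplicity theorem --- applicable because $R=\mathcal{O}_{X,a}$ with $X$ of pure dimension $n$ is quasi-unmixed --- then yields $e_a({\bf g})=e(I_{\bf g})>e(I_{\bf f})=e_a({\bf f})$. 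The only step requiring genuine outside input, and thus the main obstacle, is this last one (Rees's theorem); everything else is elementary.

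For property (ii) I would argue by induction on $n$. For each $p\in Y\cap H$ work in an affine chart through $p$, so that the local multiplicity at $p$ is $e_p$ of the dehomogenised linear system in $\mathcal{O}_{Y,p}$. Both $\sum_{p\in Y\cap H}e_p$ and $\deg Y$ are additive over the irreducible components of $Y$, each weighted by the length of the local ring of $Y$ at the generic point of that component (the former by the associativity formula for Hilbert--Samuel multiplicities, the latter by additivity of degree), so one may assume $Y$ irreducible; the case $n=0$ is trivial. Writing $H=\{\ell_1=\cdots=\ell_n=0\}$, replace $\ell_n$ by a generic linear combination of $\ell_1,\ldots,\ell_n$ (this does not change $H$): then $\ell_n$ is nonzero, hence a nonzerodivisor, on the homogeneous coordinate ring of the variety $Y$, so the hyperplane section $Y_1=Y\cap\{\ell_n=0\}$ has pure dimension $n-1$ with $\deg Y_1=\deg Y$; and $\ell_n$ is superficial for $(\bar\ell_1,\ldots,\bar\ell_n)$ in $\mathcal{O}_{Y,p}$ for every $p\in Y\cap H$, so the standard hyperplane-section formula gives $e_p^{Y}(\bar\ell_1,\ldots,\bar\ell_n)=e_p^{Y_1}(\bar\ell_1,\ldots,\bar\ell_{n-1})$. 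Applying the inductive hypothesis to $Y_1$ (possibly non-reduced, but absorbed by the same additivity over components, re-generalising the next linear form among $\ell_1,\ldots,\ell_{n-1}$ at each further step) and using $Y_1\cap\{\ell_1=\cdots=\ell_{n-1}=0\}=Y\cap H$, one obtains $\sum_p e_p^{Y_1}=\deg Y_1=\deg Y$, and summing the section formula over $p$ closes the induction. Here the fiddly point is the behaviour of multiplicity under hyperplane sections at non--Cohen--Macaulay points of $Y$ and for non-reduced sections, which is precisely what the associativity and superficial-element formulas are designed to control.
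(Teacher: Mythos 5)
The paper does not prove this theorem at all: it is stated with only the remark that it ``is well-known,'' so there is no in-paper argument to compare your proposal against. Judged on its own, your reconstruction is sound and takes a natural commutative-algebra route.

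For (i) your argument is correct. The preliminary observations hold: $e_a(\mathbf f)$ depends only on $I_{\mathbf f}$ by \eqref{equ-multi-system-Samuel}; by Krull's height theorem an $\m$-primary ideal of the $n$-dimensional ring $R$ generated by $n$ elements is minimally so generated; its fibre cone is a quotient of $\k[T_1,\ldots,T_n]$ of Krull dimension $\ell(I_{\mathbf f})=n$, hence the full polynomial ring, and therefore $I_{\mathbf f}$ (a parameter ideal) has no proper reduction. The easy direction and the minimal-generators argument showing $\det(M)(a)=0\Rightarrow I_{\mathbf g}\subsetneq I_{\mathbf f}$ are fine, and you correctly identify Rees's multiplicity theorem as the one nontrivial ingredient; its hypothesis is met because $R=\mathcal{O}_{X,a}$ with $X$ of pure dimension $n$ is an excellent equidimensional local ring, hence quasi-unmixed.

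For (ii) the skeleton (reduce to irreducible $Y$ via the associativity formula, then induct on $n$ by slicing with a generic member of the pencil $\langle\ell_1,\ldots,\ell_n\rangle$ and applying the superficial-element formula) is a standard and workable route to the ``degree equals sum of local intersection multiplicities'' theorem. Two points you flag but leave compressed deserve a little more care. First, one must choose a single combination of the $\ell_i$ that is simultaneously a nonzerodivisor on the homogeneous coordinate ring of $Y$ and superficial for each of the finitely many local ideals at $Y\cap H$; both are nonempty open conditions on the $n$-dimensional coefficient space, so this is fine over an infinite field, but should be stated. Second, after one slice $Y_1$ is generally a non-reduced scheme of pure dimension $n-1$, and the superficial-element reduction of multiplicity $e(I;R)=e(I/(x);R/(x))$ needs either $\dim R\ge 2$ or $x$ a nonzerodivisor; the way to keep the induction honest (and handle the low-dimensional case) is exactly the device you gesture at, namely working at the level of the cycle $[Y_1]=\sum_i m_i[Z_i]$ and applying the inductive hypothesis to the reduced components $Z_i$, matching lengths on both the degree side and the associativity-formula side. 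With those details spelled out, (ii) closes. Overall this is a legitimate proof of a statement the paper takes for granted.
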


Note that we can replace the variety $X$ with an affine neighborhood of the point $a$. Thus without loss of generality we can assume $X$ is affine. We will
fix an embedding of $X$ into some affine space $\A^N$ and assume that the point $a$ is the origin $o \in \A^N$.

Consider a system of equations $f_1(x) = \cdots = f_n(x) = 0$ and assume that $o$ is an isolated root of this system. Suppose each $f_i$ is the
restriction of a rational function $P_i/Q_i$ to $X$ where $P_i$, $Q_i$ are polynomials in $\AA = \k[x_1, \ldots, x_N]$ and $Q_i(o)$ is not equal to $0$ for all $i$.

\begin{Prop} \label{prop-multi-reg-vs-poly}
The multiplicity of $o$ as an isolated root of the system $f_1(x) = \ldots, f_n(x) = 0$ is equal the multiplicity of $o$ as a root of 
the system $P_1(x) = \ldots = P_n(x) = 0$. In particular $o$ is an isolated root of this system too.
\end{Prop}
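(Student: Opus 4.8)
The plan is to reduce the statement to property (i) of multiplicity (invariance under multiplication by an invertible matrix of regular functions), applied with a diagonal matrix. Write $g_i = Q_i^{-1} P_i$ so that $g_i = f_i$ as elements of $R = \mathcal{O}_{X,o}$, since $Q_i(o) \neq 0$ means $Q_i$ is a unit in $R$. Let $M = \diag(Q_1, \ldots, Q_n)$, an $n \times n$ matrix whose entries are regular functions at $o$. Then $(P_1, \ldots, P_n)^t = M (g_1, \ldots, g_n)^t = M (f_1, \ldots, f_n)^t$, and $\det(M)(o) = Q_1(o) \cdots Q_n(o) \neq 0$. By property (i) (Theorem \ref{th-properties-multi-system}), $o$ is an isolated root of the transformed system if and only if it is an isolated root of the original, and in that case the two multiplicities agree: $e_o(f_1, \ldots, f_n) = e_o(P_1, \ldots, P_n)$.

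First I would make precise that $P_i$ and $Q_i$ here denote the restrictions to $X$ of the ambient polynomials in $\AA = \k[x_1,\ldots,x_N]$, so that $P_i, Q_i \in R$ and the hypotheses $Q_i(o) \neq 0$ literally say $Q_i \in R^\times$. Next I would observe that as ideals, and hence as systems cutting out the same germ near $o$, the tuple $(f_1, \ldots, f_n)$ and the tuple $(g_1, \ldots, g_n) = (Q_1^{-1}P_1, \ldots, Q_n^{-1}P_n)$ are literally equal in $R$ because each $f_i$ was defined to be the restriction of $P_i/Q_i$; so $e_o(f_1,\ldots,f_n) = e_o(g_1,\ldots,g_n)$ trivially. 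Then I would invoke property (i) with the diagonal matrix $M$ above to pass from $(g_1, \ldots, g_n)$ to $(P_1, \ldots, P_n)$.

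There is no real obstacle; the only point requiring a word of care is that property (i) as stated in the excerpt asserts the equality of multiplicities "if and only if $\det(M)(a) \neq 0$", which I read as: under the running assumption that one of the two systems has $a$ as an isolated root, $\det(M)(a) \neq 0$ forces the other to have $a$ as an isolated root with the same multiplicity. Applying this in our setting yields simultaneously that $o$ is an isolated root of $P_1(x) = \cdots = P_n(x) = 0$ and that its multiplicity equals $e_o(f_1, \ldots, f_n)$, which is exactly the two assertions of the proposition.
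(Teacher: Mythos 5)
Your proof is correct and takes exactly the approach the paper intends: the paper's own proof is the one-line remark ``Follows directly from property (i) of the multiplicity,'' and your argument simply makes explicit how that property is invoked, namely with the diagonal matrix $M = \diag(Q_1,\ldots,Q_n)$, whose determinant is nonzero at $o$ since each $Q_i(o)\neq 0$.
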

\begin{proof}
Follows directly from property (i) of the multiplicity.
\end{proof}
Thus the study of the multiplicities of systems of equations is reduced to the case where the variety is affine and the functions are restrictions of polynomials.

\section{Systems of equations with no roots at infinity} \label{sec-no-sol-infinity}
Let $n \leq N$ and consider a system of $n$ polynomial equations in the variables $x = (x_1, \ldots, x_N) \in \A^N$, 
\begin{equation} \label{equ-poly-system}
P_1(x) = \cdots = P_n(x) = 0.
\end{equation}
Let $W \subset \A^N$ be the algebraic set defined by \eqref{equ-poly-system}. 
Consider the usual projective completion of the affine space $\A^N \subset \p^N$ and let $L_\infty = \p^N \setminus \A^N$ 
be the infinite hyperplane. Let $Z$ be the intersection in $\p^N$ of $L_\infty$ and $\overline{W}$.

Let $\AA$ denote the polynomial algebra $\k[x_1, \ldots, x_N]$. 
Let $\phi: X \hookrightarrow \A^N$ be an embedding of an affine variety $X$ of pure dimension $n$ 
into the affine space $\A^N$ and let $\phi^*: \AA \to \k[X]$ denote the corresponding restriction map on the coordinate rings.

\begin{Def} \label{def-system-no-sol-infinity}
We say that a point $a \in \p^N$ is a {\it root at infinity of the system \eqref{equ-poly-system} on $X$} if 
$a \in \overline{\phi(X)} \cap Z$.
\end{Def}

Let $m > 0$ be the maximum degree of the $P_i$.
To the system \eqref{equ-poly-system} one associates the system 
\begin{equation} \label{equ-poly-homo-system}
P_1^{(m)}(x) = \cdots = P_n^{(m)}(x) = 0,
\end{equation}
where $P_i^{(m)}$ denotes the degree $m$ homogeneous component of $P_i$. The following is easy to verify.
\begin{Prop} \label{prop-root-at-infinity}
If $a \in \overline{\phi(X)} \cap Z$ is a root at infinity of the system \eqref{equ-poly-system} then $a$ is a root at infinity on $X$ for the system
\eqref{equ-poly-homo-system}.
\end{Prop}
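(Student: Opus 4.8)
The plan is to reduce the statement to the purely set-theoretic inclusion
\[
\overline{W}\cap L_\infty \ \subseteq\ \overline{W'}\cap L_\infty ,
\]
where $W=V(P_1,\ldots,P_n)$ is the zero set of \eqref{equ-poly-system} and $W'=V(P_1^{(m)},\ldots,P_n^{(m)})$ is the zero set of \eqref{equ-poly-homo-system}. Indeed, by Definition \ref{def-system-no-sol-infinity} a root at infinity of \eqref{equ-poly-system} on $X$ is a point of $\overline{\phi(X)}\cap L_\infty\cap\overline{W}$, while a root at infinity of \eqref{equ-poly-homo-system} on $X$ is a point of $\overline{\phi(X)}\cap L_\infty\cap\overline{W'}$; the two conditions $a\in\overline{\phi(X)}$ and $a\in L_\infty$ are common to both, so once the displayed inclusion is established the proposition follows.

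First I would homogenize. Write $\p^N$ with homogeneous coordinates $[x_0:x_1:\cdots:x_N]$ and $L_\infty=\{x_0=0\}$, and let $\widetilde{P_i}=x_0^{m}\,P_i(x_1/x_0,\ldots,x_N/x_0)$ be the degree-$m$ homogenization of $P_i$ (recall $m$ is the maximum degree of the $P_i$). Since $\widetilde{P_i}$ restricts to $P_i$ on the affine chart $x_0=1$, the projective variety $V(\widetilde{P_1},\ldots,\widetilde{P_n})$ contains $W$, hence contains $\overline{W}$. Setting $x_0=0$ leaves only the top-degree term of $\widetilde{P_i}$, so $\widetilde{P_i}|_{x_0=0}=P_i^{(m)}(x_1,\ldots,x_N)$. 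Identifying $L_\infty$ with $\p^{N-1}$ via $[x_1:\cdots:x_N]$, I therefore get
\[
\overline{W}\cap L_\infty\ \subseteq\ V(\widetilde{P_1},\ldots,\widetilde{P_n})\cap L_\infty
= \{\,[v]\in\p^{N-1} : P_i^{(m)}(v)=0 \text{ for all } i\,\}.
\]

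Next I would identify the right-hand side with $\overline{W'}\cap L_\infty$. Because the $P_i^{(m)}$ are homogeneous, $W'=\bigcap_i V(P_i^{(m)})\subseteq\A^N$ is a cone with apex at the origin, so its projective closure $\overline{W'}$ is the associated projective cone and meets $L_\infty$ exactly in the projectivization of $W'$, i.e.\ in $\{\,[v]\in\p^{N-1}:P_i^{(m)}(v)=0 \ \forall i\,\}$. Concretely: if $v\neq 0$ and all $P_i^{(m)}(v)=0$, then the whole line $\k v$ lies in $W'$, so $[0:v]$ lies in the closure of that line, hence in $\overline{W'}$; the reverse inclusion is immediate. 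Combining the last two displays gives $\overline{W}\cap L_\infty\subseteq\overline{W'}\cap L_\infty$, which, as explained above, proves the proposition.

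I do not expect a genuine obstacle here: the argument is bookkeeping with homogenization, and the only step that needs a moment's care is the identification $\overline{W'}\cap L_\infty=\{\,[v]:P_i^{(m)}(v)=0\ \forall i\,\}$, which relies on $W'$ being a cone so that no spurious points at infinity appear. Note that $m$ being the maximal degree forces some $P_i^{(m)}$ to be nonzero, so \eqref{equ-poly-homo-system} is a genuine system, although this is not needed for the inclusion.
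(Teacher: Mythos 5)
Your proof is correct. The paper itself gives no argument here (it just declares the statement ``easy to verify''), and what you wrote is precisely the natural elementary verification the authors presumably have in mind: the chain $\overline{W}\cap L_\infty \subseteq V(\widetilde P_1,\ldots,\widetilde P_n)\cap L_\infty = \{[v]:P_i^{(m)}(v)=0\ \forall i\} = \overline{W'}\cap L_\infty$, followed by intersecting with $\overline{\phi(X)}$. The only step that merits the care you gave it is the identification of $\overline{W'}\cap L_\infty$ with the projectivization of the cone $W'$; your line argument handles that cleanly, and the reverse containment is indeed immediate since each (homogeneous) $P_i^{(m)}$ vanishes on $\overline{W'}$ and, restricted to $L_\infty$, cuts out exactly the stated set. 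One small remark: you do not need to assume every $P_i^{(m)}$ is nonzero; if $\deg P_i < m$ then $P_i^{(m)}=0$ and the corresponding equation in \eqref{equ-poly-homo-system} is vacuous, which only enlarges $W'$ and so does not affect the inclusion you need.
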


\begin{Rem} \label{rem-root-at-infinity}
Note that if $a$ is a root at infinity on $X$ for the system \eqref{equ-poly-homo-system} then it is not necessarily a root 
at infinity on $X$ for the system \eqref{equ-poly-system}.
\end{Rem}

As before let $\AA^{(m)}$ and $\PP^{(m)}$ 
denote the vector space of polynomials in $\AA$ of degree less than or equal to $m$, and the vector space of 
homogeneous polynomials of degree $m$ respectively.  
For each $m > 0$ we have the Kodaira embedding (in this case also called the Veronese embedding) 
$\Phi_m : \p^N \to \p(\AA^{(m)*})$ where $\AA^{(m)*}$ is the dual space of $\AA^{(m)}$. 
Let $Y_m$ be the closure of the image of $X$ under the map $\Phi_m \circ \phi$. 
Also as before let $A^{(m)} = \phi^*(\AA^{(m)})$, that is, $A^{(m)}$ is the vector space of regular functions on $X$ which are 
restrictions of polynomial functions on $\A^N$ of degree at most $m$.
\begin{Th} \label{th-system-no-sol-infinity}
Let $L$ be a vector subspace of $A^{(m)}$ which contains the restrictions of all homogeneous degree 
$m$ polynomials i.e. $\PP^{(m)}$. Consider a system of polynomials from $L$ as in \eqref{equ-poly-system}. 
\begin{itemize}
\item[(1)] There is a nonempty Zariski open subset in $L \times \cdots \times L$ 
such that any system in this open subset has no roots at infinity on $X$. 
\item[(2)] If the system has no roots at infinity on $X$ then the number of roots of the system on $X$, counted with multiplicity, is 
equal to $\deg(Y_m) = [A^{(m)}, \ldots, A^{(m)}]$.
\end{itemize}
\end{Th}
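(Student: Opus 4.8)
The plan is to transport the whole problem onto the Veronese image $Y_m$ and apply B\'ezout's theorem there. Write $\psi=\Phi_m\circ\phi$. Since $\phi$ and $\Phi_m$ are embeddings and $\overline{\phi(X)}$ is closed in $\p^N$, the map $\Phi_m$ restricts to a closed embedding of $\overline{\phi(X)}$ into $\p(\AA^{(m)*})$ with image $Y_m$; in particular $Y_m$ is closed of pure dimension $n$, and we keep the name $\psi$ for this extension. A polynomial $P$ of degree $\le m$ determines a hyperplane $H_P=\{[\xi]\in\p(\AA^{(m)*}):\xi(P)=0\}$, and $\psi^{-1}(H_P)$ is the zero locus on $\overline{\phi(X)}$ of the degree-$m$ homogenization $\tilde P$ of $P$; on the affine chart $\A^N$ this zero locus equals $\{P=0\}$, and on $L_\infty$ it equals $\{P^{(m)}=0\}$. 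Finally, because $A^{(m)}\supseteq A^{(1)}$ and the restrictions to $X$ of the affine coordinates separate points of $X$, the Kodaira map $\Phi_{A^{(m)}}$ is a birational isomorphism onto $Y_m$, so $\deg Y_m=[A^{(m)},\dots,A^{(m)}]$ by Proposition \ref{prop-int-index-degree}; this settles the last equality in the statement. (If the $P_i$ are presented only as elements of $A^{(m)}\subset\k[X]$, lift them arbitrarily to polynomials of degree $\le m$; the intersections with $Y_m$ below do not depend on the lift.)

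For part (1) I would invoke Proposition \ref{prop-root-at-infinity}: it suffices to produce a nonempty Zariski open subset of $L^n$ over which the homogenized system $P_1^{(m)}=\dots=P_n^{(m)}=0$ has no common zero on $Z_0:=\overline{\phi(X)}\cap L_\infty$, since then the original system has no root at infinity on $X$. Now $Z_0\subset L_\infty\cong\p^{N-1}$ is projective of dimension $\le n-1$. Because $L\supseteq\PP^{(m)}$, the linear map $L\to\PP^{(m)}$, $P\mapsto P^{(m)}$, is surjective, so it is enough to treat a generic $(Q_1,\dots,Q_n)\in(\PP^{(m)})^n$. Form the incidence variety $\mathcal{I}=\{(z,Q_1,\dots,Q_n)\in Z_0\times(\PP^{(m)})^n:Q_1(z)=\dots=Q_n(z)=0\}$; for a fixed $z$ the condition $Q(z)=0$ is a single nonzero linear condition on $Q\in\PP^{(m)}$, so the fibre of $\mathcal{I}\to Z_0$ over $z$ has codimension $n$ in $(\PP^{(m)})^n$, whence $\dim\mathcal{I}\le(n-1)+\dim(\PP^{(m)})^n-n<\dim(\PP^{(m)})^n$. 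As $Z_0$ is complete, the image of the second projection is a proper closed subset of $(\PP^{(m)})^n$, and its complement, pulled back to $L^n$, is the open set sought (the case $Z_0=\emptyset$ being trivial).

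For part (2): assume the system has no root at infinity on $X$; the form in which the hypothesis is used is that the leading forms $P_1^{(m)},\dots,P_n^{(m)}$ have no common zero on $Z_0$ (its relation to Definition \ref{def-system-no-sol-infinity} being governed by Proposition \ref{prop-root-at-infinity}). Put $\Lambda=H_{P_1}\cap\dots\cap H_{P_n}$ and $T=\psi^{-1}(\Lambda)=\{y\in\overline{\phi(X)}:\tilde P_1(y)=\dots=\tilde P_n(y)=0\}$. By the description of $\psi^{-1}(H_P)$ above, $T\cap L_\infty$ is the common zero locus of the $P_i^{(m)}$ on $Z_0$, hence empty, so $T\subset\overline{\phi(X)}\cap\A^N=\phi(X)$. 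Being closed in $\p^N$ and contained in $\A^N$, $T$ is a complete affine variety, hence finite; under $\phi$ it is exactly the (therefore finite) set of roots of the system on $X$. Consequently $Y_m\cap\Lambda=\psi(T)$ is finite; since $\dim Y_m=n$, this forces $\Lambda$ to have codimension exactly $n$ and every partial intersection $Y_m\cap H_{P_1}\cap\dots\cap H_{P_k}$ to have dimension $n-k$, so B\'ezout applies and, by property (ii) recalled in Section \ref{sec-multi-root}, the number of points of $Y_m\cap\Lambda$ counted with multiplicity equals $\deg Y_m$. It remains to identify the local multiplicities: for $q=\psi(\phi(a))\in Y_m\cap\Lambda$, the isomorphism $\psi$ carries the local intersection multiplicity of $Y_m$ with $\Lambda$ at $q$ to the local intersection multiplicity at $\phi(a)$ of $\overline{\phi(X)}$ with the hypersurfaces $\{\tilde P_1=0\},\dots,\{\tilde P_n=0\}$; since $\phi(a)\in\A^N$ and each $\tilde P_i$ restricts to $P_i$ there, this equals the intersection multiplicity of the hypersurfaces $\{P_i=0\}$ on $X$ at $a$, i.e.\ $e_a(P_1,\dots,P_n)$ (Definition \ref{def-multi-system}). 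Summing over $a$ gives $\sum_a e_a(P_1,\dots,P_n)=\deg Y_m=[A^{(m)},\dots,A^{(m)}]$, which is the assertion.

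The step I expect to be the main obstacle is the control at infinity in part (2): one must ensure that ``no root at infinity on $X$'' really forces $T\cap L_\infty=\emptyset$, equivalently that the leading forms have no common zero on $Z_0$. This is exactly where Proposition \ref{prop-root-at-infinity} and the hypothesis $L\supseteq\PP^{(m)}$ --- which is what makes the genericity argument of part (1) available --- come into play; were points of $Y_m\cap\Lambda$ allowed to sit over $L_\infty$, they would contribute to $\deg Y_m$ without corresponding to roots of the system on $X$, and the equality would fail. The remaining ingredient --- that a finite codimension-$n$ linear section of a pure $n$-dimensional projective variety has total intersection multiplicity equal to the degree, together with the properness of the successive hyperplane sections --- is standard once $Y_m\cap\Lambda$ is known to be finite, and is precisely property (ii) recalled in Section \ref{sec-multi-root}.
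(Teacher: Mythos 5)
Your overall architecture --- transport to the Veronese image $Y_m$, the incidence-variety dimension count for Part (1), and B\'ezout (property (ii) of Section \ref{sec-multi-root}) for Part (2) --- is exactly what the paper's one-line proof (``Follows from Proposition \ref{prop-root-at-infinity} and Theorem \ref{th-properties-multi-system}(ii)'') intends, and you supply the details the paper omits. Part (1) is sound, as is the identification of local intersection multiplicities, the observation that $\psi$ carries $\overline{\phi(X)}$ onto $Y_m$, and the independence-of-lift remark.

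There is, however, a genuine gap in Part (2), and you in fact flag it as the ``main obstacle'' without resolving it. You need the implication: ``no root at infinity on $X$ for \eqref{equ-poly-system}'' $\Longrightarrow$ ``the leading forms $P_1^{(m)},\dots,P_n^{(m)}$ have no common zero on $Z_0$'' (equivalently $T\cap L_\infty=\emptyset$), and you appeal to Proposition \ref{prop-root-at-infinity} for it. But that Proposition gives the \emph{reverse} implication --- a root at infinity of \eqref{equ-poly-system} is a root at infinity of \eqref{equ-poly-homo-system} --- and Remark \ref{rem-root-at-infinity} explicitly warns that the converse fails. In fact the implication you need is false, and the statement taken literally with Definition \ref{def-system-no-sol-infinity}'s hypothesis is false as well: take $X=\A^1$, $N=n=1$, $m=2$, $L=\span(x,x^2)$, and $P_1=x$. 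Then $W=\{0\}$, so $\overline W\cap L_\infty=\emptyset$ and the system has no root at infinity; yet $P_1^{(2)}=0$ vanishes identically on $Z_0=\{\infty\}$, the Veronese conic $Y_2\subset\p^2$ has degree $2$, and $e_0(x)=1\ne 2$. The extra point of $Y_2\cap H_{P_1}$ sits over $\infty$, which is exactly the phenomenon you describe as fatal. The fix is to read the hypothesis of Part (2) as ``the homogenized system \eqref{equ-poly-homo-system} has no root at infinity on $X$,'' i.e.\ the leading forms have no common zero on $Z_0$: this is the condition your incidence-variety argument in Part (1) actually produces, it is what all downstream applications (Theorem \ref{th-orth-relation}, Theorem \ref{th-multi-int-index}) supply via Part (1), and under it your B\'ezout argument is complete. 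You should say explicitly that you are using this stronger hypothesis, rather than invoking Proposition \ref{prop-root-at-infinity} in the direction it does not go.
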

\begin{proof}
Follows from Proposition \ref{prop-root-at-infinity} and Theorem \ref{th-properties-multi-system}(ii).
\end{proof}

Finally we show below (Proposition \ref{prop-isolated-roots-no-sol-infinity}) that we can 'perturb' a system such that the multiplicity at $a$ is unchanged,  the new system has no roots at infinity and and all its roots except the origin are simple.

Without loss of generality we assume $a = o$ is the origin.
Let $f_1, \ldots, f_n \in \m$ and let $I$ be the ideal generated by the $f_i$ in $R = \mathcal{O}_{X, o}$. 
Assume that $o$ is an isolated solution of the system $f_1(x) = \cdots = f_n(x) = 0$, i.e. the ideal $I$ is $\m$-primary. Then there is $r>0$ such that $\m^r \subset I$.

\begin{Lem} \label{lem-Nakayama}
Let $h_1, \ldots, h_n \in \m^{m}$ where $m > r$, then the ideal generated by $f_1 + h_1, \ldots, f_n + h_n$ coincides with $I$, the ideal generated by the $f_i$.
\end{Lem}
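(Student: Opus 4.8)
The plan is to deduce this from Nakayama's lemma applied to the finitely generated module $I/J$ over the Noetherian local ring $R$, where $J$ denotes the ideal generated by $f_1 + h_1, \ldots, f_n + h_n$. First I would observe the easy inclusion $J \subseteq I$: since $m > r$ and $\m^r \subseteq I$, each $h_i$ lies in $\m^m \subseteq I$, so $f_i + h_i \in I$ and hence $J \subseteq I$. The content of the lemma is therefore the reverse inclusion $I \subseteq J$.

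For this, I would write $f_i = (f_i + h_i) - h_i$ for each $i$, which shows $f_i \in J + \m^m$, and consequently $I = (f_1, \ldots, f_n) \subseteq J + \m^m$. Next I would use that $m > r$ to factor $\m^m = \m^{m-r}\m^r \subseteq \m^{m-r} I \subseteq \m I$ (here $m - r \geq 1$). Combining, we get $I \subseteq J + \m I$.

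Passing to the quotient $R$-module $I/J$ (which is finitely generated since $R$ is Noetherian), the inclusion $I \subseteq J + \m I$ becomes $I/J = \m \cdot (I/J)$. Since $R$ is local with maximal ideal $\m$ equal to its Jacobson radical, Nakayama's lemma gives $I/J = 0$, i.e. $I = J$, as desired.

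I do not expect a real obstacle here: the argument is a standard Nakayama application, and the only points worth stating explicitly are that $R = \mathcal{O}_{X,o}$ is Noetherian and local (so that Nakayama applies and $I/J$ is finitely generated) and the elementary inclusion $\m^m \subseteq \m I$ coming from $\m^r \subseteq I$ together with $m > r$.
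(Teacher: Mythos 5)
Your proof is correct and is essentially the same as the paper's: both establish $I = J + \m^m \subseteq J + \m I$ and then apply Nakayama's lemma to the finitely generated module $I/J$ to conclude $I = J$. You simply spell out the inclusions in a bit more detail.
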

\begin{proof}
Let $J$ denote the ideal generated by the $f_i + h_i$. Then $I = J + \m^{m}$ and hence $I = J + \m I$. By 
Nakayama's lemma we have $I = J$ as claimed.
\end{proof}

\begin{Prop} \label{prop-isolated-roots-no-sol-infinity}
Let $f_1, \ldots, f_n \in \m$ be as above. Then we can find $g_1, \ldots, g_n \in \m$ such that:
\begin{itemize}
\item[(1)] The $g_i$ generate the same ideal as the $f_i$ in the local ring $R$ and hence $e_o(g_1, \ldots, g_n) = e_o(f_1, \ldots, f_n)$.
\item[(2)] The system $g_1(x) = \ldots = g_n(x) = 0$ has no roots at infinity on $X$, moreover, the roots of this system in $X$ except the root $a$, lie in the smooth locus of $X$ and are isolated and simple.
\end{itemize}
\end{Prop}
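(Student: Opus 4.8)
The plan is to produce the $g_i$ as perturbations $g_i=f_i+h_i$ of the $f_i$ by generic restrictions $h_i$ of homogeneous polynomials of a single sufficiently large degree $M$. Property (1) will be immediate from Nakayama's lemma, the absence of roots at infinity will follow from the leading-form criterion (Proposition \ref{prop-root-at-infinity}) together with Theorem \ref{th-system-no-sol-infinity}(1), and the assertion that the remaining roots are smooth, isolated and simple will come from a Bertini-type dimension count on suitable incidence varieties.

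First I would reduce to the case in which the $f_i$ are restrictions of polynomials. Writing $f_i=P_i/Q_i$ with $P_i,Q_i\in\AA$ and $Q_i(o)\ne 0$, the $Q_i$ are units in $R$, so $(P_1,\ldots,P_n)=I$, and by Proposition \ref{prop-multi-reg-vs-poly} the multiplicity at $o$ is unchanged; subtracting a constant we may assume $f_i=P_i|_X$ with $P_i(o)=0$. Fix $r>0$ with $\m^r\subset I$ and choose an integer $M$ strictly larger than $r$ and than every $\deg P_i$. Let $P^{(M)}=\phi^*(\PP^{(M)})\subset A^{(M)}$ be the space of restrictions to $X$ of homogeneous degree-$M$ polynomials; since each degree-$M$ monomial restricts into $\m^M$ we have $P^{(M)}\subset\m^M$. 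For $h=(h_1,\ldots,h_n)$ in the irreducible parameter space $\Lambda:=(P^{(M)})^n$ put $g_i=f_i+h_i$; then $g_i\in\m$, and because each $h_i\in\m^M$ with $M>r$, Lemma \ref{lem-Nakayama} gives $(g_1,\ldots,g_n)=I$ for every $h\in\Lambda$. In particular $o$ remains an isolated root, and since the multiplicity of a system depends only on the ideal it generates (Definition \ref{def-multi-system}), $e_o(g_1,\ldots,g_n)=e_o(f_1,\ldots,f_n)$. This gives (1), so it remains to choose $h$ so that (2) holds, and it suffices to exhibit a dense Zariski-open subset of $\Lambda$ on which (2) is valid.

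For the absence of roots at infinity, note that since $M>\deg P_i$ the degree-$M$ leading form of the polynomial $P_i+H_i$ representing $g_i$ (where $h_i=H_i|_X$) equals $H_i$; hence by Proposition \ref{prop-root-at-infinity} it is enough to arrange that the system $H_1=\cdots=H_n=0$ have no roots at infinity on $X$, and Theorem \ref{th-system-no-sol-infinity}(1) applied to $L=P^{(M)}$ produces a dense open $\Lambda_0\subset\Lambda$ on which this holds. For the remaining roots I would work with incidence varieties over $\Lambda$. The key input is that $\mathcal{O}_{\p^N}(M)$ is very ample, so that for every $x\in X$ the evaluation $P^{(M)}\to\k$, $h\mapsto h(x)$, is surjective, and for every $x$ in the smooth locus $X_{\mathrm{sm}}$ the $1$-jet map $P^{(M)}\to\k\oplus T^*_xX$ is surjective. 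Consequently, for a fixed $x$ the conditions $g_1(x)=\cdots=g_n(x)=0$ cut out in $\Lambda$ a nonempty affine subspace of codimension exactly $n$, and for $x\in X_{\mathrm{sm}}$, within that subspace the condition that $dg_1(x),\ldots,dg_n(x)$ be linearly dependent in the $n$-dimensional space $T^*_xX$ removes one further dimension, since $dg_i(x)=df_i(x)+dh_i(x)$ still ranges over all of $T^*_xX$, independently in $i$. Counting dimensions over the base: the incidence variety of pairs $(h,x)$ with $x\in X_{\mathrm{sm}}\setminus\{o\}$ a common zero of the $g_i$ has dimension $\le\dim\Lambda$, so over a dense open $\Lambda_1$ the system has only finitely many roots in $X_{\mathrm{sm}}\setminus\{o\}$; the analogous incidence variety over $X_{\mathrm{sing}}\setminus\{o\}$, a set of dimension $\le n-1$, has dimension $<\dim\Lambda$, so over a dense open $\Lambda_2$ the system has no root there; and the incidence variety over $X_{\mathrm{sm}}\setminus\{o\}$ with the extra linear-dependence condition has dimension $<\dim\Lambda$, so over a dense open $\Lambda_3$ every root of the system in $X_{\mathrm{sm}}\setminus\{o\}$ is simple, hence isolated. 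Any $h\in\Lambda_0\cap\Lambda_1\cap\Lambda_2\cap\Lambda_3$ then gives $g_1,\ldots,g_n$ satisfying (1) and (2).

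The main obstacle is the last step: making the incidence-variety dimension counts rigorous. This hinges on the very ampleness of $\mathcal{O}_{\p^N}(M)$, which guarantees that perturbing within $P^{(M)}$ moves both the value and the differential of each $g_i$ freely at every smooth point, so the expected codimensions are the actual ones; and it requires handling $X_{\mathrm{sing}}$ separately, where one only has to eliminate roots (not make them simple), using a dimension bound rather than transversality. In contrast, property (1) is a one-line consequence of Lemma \ref{lem-Nakayama}, and the absence of roots at infinity is a direct application of Proposition \ref{prop-root-at-infinity} and Theorem \ref{th-system-no-sol-infinity}(1).
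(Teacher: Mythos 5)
Your proof is correct and takes essentially the same route as the paper: reduce to polynomial $f_i$, perturb by generic restrictions of homogeneous polynomials of a large degree, invoke Nakayama's lemma (Lemma \ref{lem-Nakayama}) for (1), and Proposition \ref{prop-root-at-infinity} plus Theorem \ref{th-system-no-sol-infinity}(1) for the absence of roots at infinity. The one place where you genuinely differ is the last step. The paper gets smoothness and simplicity of the roots away from the origin by invoking Theorem \ref{th-int-index-well-def} for ``a generic system in $L_m$,'' where $L_m = \span(f_1,\ldots,f_n) + \PP^{(m)}$; but the perturbed systems $g_i = f_i + h_i$ with $h_i\in\PP^{(m)}$ do not sweep out all of $L_m^n$ --- only the affine translate $(f_1,\ldots,f_n) + (\PP^{(m)})^n$ --- so it is not immediate that this slice meets the good Zariski-open subset of Theorem \ref{th-int-index-well-def}. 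Your incidence-variety dimension count, relying on the surjectivity of the $1$-jet map of $\PP^{(M)}$ at every $x\neq o$ in $X_{\mathrm{sm}}$, is exactly the argument that fills this in and shows the affine slice itself contains a dense open set of good parameters; the separate bound for $X_{\mathrm{sing}}\setminus\{o\}$ (where you only need to avoid roots, not make them simple) is also the right way to handle the reducible/singular case. So your argument is a somewhat longer but more self-contained and more airtight version of the same proof, at the cost of redoing the Bertini-type bookkeeping that the paper outsources to its Theorem \ref{th-int-index-well-def}.
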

\begin{proof}
Let $m > r$, then by Lemma \ref{lem-Nakayama} for any $h_1, \ldots, h_n \in \m^m$, the ideal generated by the 
$g_i = f_i + h_i$ is the same as $I$ and hence $e_o(g_1, \ldots, g_n) = e_o(f_1, \ldots, f_n)$. Also by 
Theorem \ref{th-system-no-sol-infinity} (1), if the $h_i$ are generic then the system $g_1(x) = \cdots = g_n(x) = 0$ has no roots at infinity on $X$. Since the variety defined by the system is closed and does not intersect the hyperplane at infinity it follows that it is a finite set of points, i.e. all the roots are isolated. Now consider the subspace $L_m$ spanned by all the $f_i$ and all functions which are the restrictions of homogeneous polynomials of degree $m$, i.e. $\PP^{(m)}$. By Theorem \ref{th-int-index-well-def} for a generic system in $L_m$, all the roots lie in the smooth locus of $X$ and are simple.
\end{proof}

\section{Intersection indices of subspaces associated to $\m$-primary ideals} \label{sec-int-index-subspace-ideal}
The following theorem plays an important role for us. An analogous relation holds for the mixed 
volume of convex bodies (see Section \ref{sec-convex-geo}). As before $X$ is an affine variety of pure dimension $n$ embedded in 
an affine space $\A^N$. We assume the origin $o$ is in $X$ and put $R = \mathcal{O}_{X, o}$.
\begin{Th}[Orthogonality relations for intersection indices] \label{th-orth-relation}
Let $0 < p < n$ and let $I_1, \ldots, I_p$ be $\m$-primary ideals in the local ring $R$. Then for sufficiently large $m > 0$ we have
\begin{itemize} 
\item[(1)] $$[I_1^{(m)}, \ldots, I_p^{(m)}, A^{(m)}, \ldots, A^{(m)}] = [A^{(m)}, \ldots, A^{(m)}].$$
(Recall that the intersection index $[A^{(m)}, \ldots, A^{(m)}]$ is the degree of the projective variety $Y_m$ 
(Theorem \ref{th-system-no-sol-infinity}(2).)
\item[(2)] It follows that for sufficiently large $m > 0$ we have
$$[I_1^{(m)} / A^{(m)}, \ldots, I_p^{(m)} / A^{(m)}, A^{(m)}, \ldots, A^{(m)}] = 0.$$
\end{itemize}
\end{Th}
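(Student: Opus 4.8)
The plan is to prove part (1) first, since part (2) follows formally from it by multi-additivity of the intersection index on $\G(X)$: expanding $[I_1^{(m)}/A^{(m)}, \ldots, I_p^{(m)}/A^{(m)}, A^{(m)}, \ldots, A^{(m)}]$ along each of the first $p$ slots using bilinearity, every resulting term has the shape $[B_1, \ldots, B_p, A^{(m)}, \ldots, A^{(m)}]$ with each $B_j$ equal to either $I_j^{(m)}$ or $A^{(m)}$; by part (1) applied to the sub-tuple of those $B_j$ that equal $I_j^{(m)}$ (together with Lemma~\ref{lem-IJ} and Corollary~\ref{cor-k-m-large} to arrange all the degree parameters to be large enough and equal, and to replace missing $I_j^{(m)}$'s by $A^{(m)}$'s which is harmless since $A^{(m)}=R^{(m)}$ plays the role of the ``whole ring'' ideal), every such term equals $[A^{(m)}, \ldots, A^{(m)}]$, and the signed count of terms in the binomial expansion of $\prod_{j=1}^p (I_j^{(m)} - A^{(m)})$ is $(1-1)^p \cdot [A^{(m)},\ldots] = 0$.

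For part (1), the idea is to use the Restriction Theorem (Theorem~\ref{th-int-index-rest}) to cut down by $n-p$ generic functions from $A^{(m)}$ and reduce to a statement on a $p$-dimensional subvariety, and then to invoke Theorem~\ref{th-int-index-leq} together with the no-roots-at-infinity machinery of Section~\ref{sec-no-sol-infinity}. Concretely, fix $r$ with $\m^r \subset I_j$ for all $j$ and take $m \geq r$ (enlarging later as needed). Choosing generic $f_{p+1}, \ldots, f_n \in A^{(m)}$, the Restriction Theorem gives a $p$-dimensional $X_{\bf f}$ on which $[{I_1^{(m)}}_{|X_{\bf f}}, \ldots, {I_p^{(m)}}_{|X_{\bf f}}] = [I_1^{(m)}, \ldots, I_p^{(m)}, A^{(m)}, \ldots, A^{(m)}]$ and likewise ${[A^{(m)}}_{|X_{\bf f}}, \ldots] = [A^{(m)}, \ldots, A^{(m)}]$. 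Since each $I_j^{(m)} \subseteq A^{(m)}$ we immediately get $[I_1^{(m)}, \ldots, I_p^{(m)}, A^{(m)}, \ldots] \leq [A^{(m)}, \ldots, A^{(m)}]$, so the whole content is the reverse inequality.

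For the reverse inequality the plan is: since $\m^r \subseteq I_j$, for $m$ large the space $I_j^{(m)}$ contains the restrictions $P^{(m)}$ of all homogeneous degree-$m$ polynomials (indeed $\m^r \subseteq I_j$ and a homogeneous degree-$m$ polynomial with $m\geq r$ lies in $\m^r$ after restriction), so $I_j^{(m)}$ is a subspace of $A^{(m)}$ containing $\PP^{(m)}$ — exactly the hypothesis of Theorem~\ref{th-system-no-sol-infinity}. Pick a generic system $(g_1,\ldots,g_p) \in I_1^{(m)} \times \cdots \times I_p^{(m)}$ and generic $g_{p+1},\ldots,g_n \in A^{(m)}$; by Theorem~\ref{th-system-no-sol-infinity}(1) applied in each slot the combined system has no roots at infinity on $X$, hence all its roots on $X$ are isolated, and by Theorem~\ref{th-system-no-sol-infinity}(2) their number counted with multiplicity equals $\deg(Y_m) = [A^{(m)},\ldots,A^{(m)}]$. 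On the other hand, genericity of $(g_1,\ldots,g_n)$ inside the product $I_1^{(m)}\times\cdots\times I_p^{(m)}\times A^{(m)}\times\cdots\times A^{(m)}$ means the number of roots off the bad locus $\Sigma$ equals $[I_1^{(m)},\ldots,I_p^{(m)},A^{(m)},\ldots,A^{(m)}]$, and all multiplicities at those roots are $1$; the only possible further roots are in $\Sigma$, each contributing a positive multiplicity, so $[A^{(m)},\ldots,A^{(m)}] \geq [I_1^{(m)},\ldots,I_p^{(m)},A^{(m)},\ldots,A^{(m)}]$. Wait — that gives the wrong direction, so the actual argument must be the opposite: the generic system from the $I_j^{(m)}$'s is itself a (non-generic) system of polynomials from $A^{(m)}$, so by Theorem~\ref{th-int-index-leq} its number of isolated roots off the base locus is $\leq [A^{(m)},\ldots,A^{(m)}]$, while by genericity within the $I_j^{(m)}$-product that number is exactly $[I_1^{(m)},\ldots,I_p^{(m)},A^{(m)},\ldots,A^{(m)}]$; combining with no-roots-at-infinity (which forces the total multiplicity-weighted count to be exactly $\deg(Y_m)$ with all the origin-avoiding roots simple) pins down the equality.

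The step I expect to be the main obstacle is the bookkeeping around ``sufficiently large $m$'': one must simultaneously ensure (a) $I_j^{(m)} \supseteq \PP^{(m)}$ so Theorem~\ref{th-system-no-sol-infinity} applies, (b) the intersection indices $[I_1^{(m)},\ldots]$ have stabilized in $m$ (via Corollary~\ref{cor-int-index-I/A}), and (c) the generic system from the $I_j^{(m)}$'s, viewed inside the larger space, is generic enough that Theorem~\ref{th-int-index-leq} is sharp — i.e. the roots not at infinity and not equal to the origin are actually simple and the count is achieved, not merely bounded. Reconciling the ``$\leq$'' from Theorem~\ref{th-int-index-leq} with the ``$=\deg(Y_m)$'' from the no-roots-at-infinity theorem, and checking that no multiplicity is lost at the finitely many roots in $\Sigma$, is the delicate point; everything else is a routine application of the restriction theorem and multi-additivity.
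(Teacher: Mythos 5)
Your treatment of part (2) is fine, as is your opening reduction for part (1) noting that the inclusion $I_j^{(m)} \subseteq A^{(m)}$ gives the inequality $[I_1^{(m)},\ldots,I_p^{(m)},A^{(m)},\ldots,A^{(m)}] \leq [A^{(m)},\ldots,A^{(m)}]$. You also correctly identify the relevant machinery (Theorems~\ref{th-system-no-sol-infinity} and \ref{th-int-index-leq}, and the observation that $\PP^{(m)} \subseteq I_j^{(m)}$ for $m \geq r$). But your proof of the reverse inequality stalls exactly where you flag it: you derive the $\leq$ direction twice (once by inclusion, once via Theorem~\ref{th-int-index-leq}), acknowledge that the difference between the total multiplicity-weighted count $\deg(Y_m)$ and the count off $\Sigma$ is the multiplicity concentrated at roots in $\Sigma$, and then write that ``checking that no multiplicity is lost at the finitely many roots in $\Sigma$ is the delicate point'' without resolving it. That unresolved point is the whole content of part (1).

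The missing observation is simple and is the one the paper uses. For $m > r$ the base locus of each $I_j^{(m)}$ is exactly $\{o\}$, while $A^{(m)}$ has no base points at all (it contains constants). So for a generic system $(P_1,\ldots,P_n) \in I_1^{(m)} \times \cdots \times I_p^{(m)} \times A^{(m)} \times \cdots \times A^{(m)}$, the last component $P_n$ does not vanish at the origin, hence $o$ is \emph{not} a root of the system. Combined with Proposition~\ref{prop-isolated-roots-no-sol-infinity}, which guarantees that all roots in $X \setminus \{o\}$ of such a generic system are simple and lie in the smooth locus, this means no multiplicity at all is concentrated on $\Sigma$: every root is simple, lies off $\Sigma$, and therefore is counted in $[I_1^{(m)},\ldots,I_p^{(m)},A^{(m)},\ldots,A^{(m)}]$. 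Since the total count with multiplicity is $\deg(Y_m) = [A^{(m)},\ldots,A^{(m)}]$ by Theorem~\ref{th-system-no-sol-infinity}(2), the two quantities coincide. Note also that the detour through the Restriction Theorem in your opening paragraph, and the invocation of Corollary~\ref{cor-int-index-I/A} for stabilization in $m$, are not needed; the argument works directly on $X$ once $m > r$, with $r$ chosen so that $\m^r \subseteq I_j$ for all $j$.
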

\begin{proof}
(1) Let $r$ be such that $\m^r \subset I_i$ for all $i$ and let $m > r$. Note that this implies that
the base locus of each $I_i^{(m)}$ consists of the origin only.
By Theorem \ref{th-system-no-sol-infinity}(1) one then knows that a generic system 
$(P_1, \ldots, P_p, \ldots, P_n) \in I_1^{(m)} \times \cdots \times I_p^{(m)} 
\times A^{(m)} \times \cdots \times A^{(m)}$ has no solutions at infinity on $X$ and also, by Proposition \ref{prop-isolated-roots-no-sol-infinity}, all its solutions in 
$X \setminus \{o\}$ are simple (i.e. have multiplicity $1$). On the other hand, by 
Theorem \ref{th-system-no-sol-infinity}(2), 
the number of solutions of the system \eqref{equ-poly-system} counted with multiplicity is equal to 
$[A^{(m)}, \ldots, A^{(m)}]$. But since the origin is not a solution of $P_n(x) = 0$ for a generic $P_n \in A^{(m)}$ we conclude that $[I_1^{(m)}, \ldots, I_p^{(m)}, A^{(m)}, \ldots, A^{(m)}] = [A^{(m)}, \ldots, A^{(m)}]$ as required. (2) Immediately follows from (1) and multi-additivity of the intersection index.
\end{proof}

\section{Mixed multiplicity of ideals} \label{sec-mixed-multi-ideal}
As usual let $X$ be a variety of pure dimension $n$ and let $R = \mathcal{O}_{X, a}$ denote the local ring of $X$ at some point $a$, 
with maximal ideal $\m$. We assume $X$ is affine, embedded in some affine space $\A^N$ and $a = o$ is the origin. 
In this section we give a definition for the mixed multiplicity of an 
$n$-tuple of $\m$-primary ideals, as the
multiplicity of a generic system of equations from this $n$-tuple of ideals (see Definition \ref{def-multi-system}).
It can be regarded as an analogue of the intersection index of an $n$-tuple of subspaces of rational functions versus 
the number of solutions of a system of equations. The next theorem and definition (Theorem \ref{th-multi-ideals} and Definition \ref{def-multi-ideals}) are analogues of Theorem \ref{th-int-index-well-def} and Definition 
\ref{def-int-index} respectively. We will see later that definition of mixed multiplicity of ideals coincides with the usual definition, in commutative algebra, as the polarization of the multiplicity polynomial (Theorem \ref{th-Samuel-vs-ours}).

We will prove the following later (Theorem \ref{th-multi-int-index}).
\begin{Th} \label{th-multi-ideals}
Let $I_1, \ldots, I_n$ be $\m$-primary ideals in $R$. (1) For sufficiently large $m>0$, there exists a nonempty Zariski open subset $\U_m \subset I^{(m)}_1 \times \cdots \times I^{(m)}_n$ such that for any 
${\bf f} = (f_1, \ldots, f_n) \in \U_m$, the {\it multiplicity} $e_o(f_1, \ldots, f_n)$ of the system 
${\bf f}$ at the origin $o$ is the same. (2) Moreover, this is the smallest multiplicity among all the systems $(g_1, \ldots, g_n) \in I^{(m)}_1 \times \cdots \times I^{(m)}_n$ with isolated root at $o$.
\end{Th}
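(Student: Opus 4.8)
The plan is to prove both statements as consequences of the material in Sections \ref{sec-int-index}--\ref{sec-no-sol-infinity} together with Theorem \ref{th-orth-relation}. First I would fix $r > 0$ with $\m^r \subset I_i$ for all $i$ and take $m \geq r$; note that for such $m$ the base locus of each $I_i^{(m)}$ inside $X$ is exactly $\{o\}$ and each $I_i^{(m)}$ contains $\PP^{(m)}$ (the restrictions of homogeneous degree $m$ polynomials), since $\m^r \subset I_i$ implies $P^{(m)} \subset I_i^{(m)}$ once $m \geq r$. Then for a generic tuple $\mathbf{f} = (f_1, \ldots, f_n) \in I_1^{(m)} \times \cdots \times I_n^{(m)}$, Theorem \ref{th-system-no-sol-infinity}(1) (applied with $L = I_i^{(m)}$, which is legitimate because $\PP^{(m)} \subset I_i^{(m)}$) gives that the system has no roots at infinity on $X$, and Proposition \ref{prop-isolated-roots-no-sol-infinity} (or directly Theorem \ref{th-int-index-well-def} applied to the subspaces $I_i^{(m)}$ on $X \setminus \Sigma$, where $\Sigma$ is the singular locus together with the common base locus $\{o\}$) gives that all roots in $X \setminus \{o\}$ are simple and that their number is the intersection index $[I_1^{(m)}, \ldots, I_n^{(m)}]$ computed away from $\{o\}$.

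Next I would pin down the multiplicity at $o$. By Theorem \ref{th-system-no-sol-infinity}(2), for a generic $\mathbf{f}$ with no roots at infinity the total number of roots of the system on $X$ counted with multiplicity equals $[A^{(m)}, \ldots, A^{(m)}]$; here, however, $\mathbf{f} \in I_1^{(m)} \times \cdots \times I_n^{(m)}$, so I should instead argue as in the proof of Theorem \ref{th-orth-relation}: the roots counted with multiplicity for a generic such $\mathbf{f}$ still sum to a fixed number, call it $N_m$, because the system has no roots at infinity (this is the content that makes the count a degree computation on $Y_m$ — the subspaces $I_i^{(m)}$ are subspaces of $A^{(m)}$ giving sections of the same very ample bundle, and a generic such section vanishes in the expected number of points counted with multiplicity, even though the individual multiplicities at $o$ may be positive). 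Combining: the simple roots away from $o$ number exactly $[I_1^{(m)}, \ldots, I_n^{(m)}]$, so
$$e_o(f_1, \ldots, f_n) = N_m - [I_1^{(m)}, \ldots, I_n^{(m)}],$$
which is independent of the generic choice of $\mathbf{f}$. This proves (1). I would also remark that $N_m = [A^{(m)}, \ldots, A^{(m)}]$, so the displayed formula matches the statement of Theorem \ref{th-intro-multi-int-index}.

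For (2), I would use the semicontinuity of multiplicity. Given any $\mathbf{g} = (g_1, \ldots, g_n) \in I_1^{(m)} \times \cdots \times I_n^{(m)}$ with $o$ an isolated root, the number of roots (counted with multiplicity) of the perturbed system $\mathbf{g} + \mathbf{h}$ for generic small $\mathbf{h}$ is at least $e_o(g_1, \ldots, g_n)$ near $o$ plus a nonnegative contribution elsewhere — more precisely, one perturbs as in Proposition \ref{prop-isolated-roots-no-sol-infinity} to make all non-origin roots simple without changing the multiplicity at $o$, then moves into the Zariski open set $\U_m$; the roots near $o$ that are "born" from the origin number at least $e_o(g_1,\ldots,g_n)$ while the roots away from $o$ contribute nonnegatively, and the total is the generic value $e_o(\mathbf{f})$. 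Hence $e_o(f_1, \ldots, f_n) \leq e_o(g_1, \ldots, g_n)$, so the generic value is the minimum. I expect the main obstacle to be the bookkeeping in this last conservation-of-number argument: one must be careful that no roots escape to infinity or collide with $o$ during the perturbation from $\mathbf{g}$ to a generic $\mathbf{f}$, and that Theorem \ref{th-properties-multi-system}(ii) / the topological degree picture really does let one split the generic root count as (roots near $o$) $+$ (simple roots elsewhere) with the first term $\geq e_o(\mathbf{g})$. Using Lemma \ref{lem-Nakayama} to keep the generated ideal (hence the multiplicity at $o$) fixed under perturbations by elements of $\m^m$, and the no-roots-at-infinity genericity from Theorem \ref{th-system-no-sol-infinity}(1), should make this rigorous; the actual identification $N_m = [A^{(m)},\ldots,A^{(m)}]$ and the equality with the Samuel mixed multiplicity are then deferred to Theorem \ref{th-multi-int-index} and Theorem \ref{th-Samuel-vs-ours} as the statement indicates.
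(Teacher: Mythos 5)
Your argument for part (1) is essentially the same as the paper's (the paper defers Theorem~\ref{th-multi-ideals} to Theorem~\ref{th-multi-int-index}, whose proof uses exactly the combination you describe: Proposition~\ref{prop-isolated-roots-no-sol-infinity} to get no roots at infinity and simple non-origin roots, Theorem~\ref{th-system-no-sol-infinity}(2) for the total count $[A^{(m)},\ldots,A^{(m)}]$, and Theorem~\ref{th-int-index-well-def} for the count $[I_1^{(m)},\ldots,I_n^{(m)}]$ of simple roots away from $o$). Your hesitation about applying Theorem~\ref{th-system-no-sol-infinity}(2) to a system with components drawn from the different $I_i^{(m)}$ is unnecessary: simply take $L = I_1^{(m)}+\cdots+I_n^{(m)} \subset A^{(m)}$, which contains $\PP^{(m)}$, and the theorem applies directly, giving $N_m = [A^{(m)},\ldots,A^{(m)}]$ immediately rather than as an afterthought.

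For part (2), note first that the paper itself never gives an explicit proof (Theorem~\ref{th-multi-int-index} only establishes the formula for the generic value, i.e.\ part (1)), so you are filling a gap rather than reproducing an argument. Your perturbation idea is the right one, but the sentence ``the roots near $o$ \dots number at least $e_o(g_1,\ldots,g_n)$ while the roots away from $o$ contribute nonnegatively, and the total is the generic value $e_o(\mathbf{f})$'' does not parse: the total count with multiplicity of \emph{all} roots is $N_m = [A^{(m)},\ldots,A^{(m)}]$, not $e_o(\mathbf{f})$, and the roots near $o$ after perturbing $\mathbf{g}$ number exactly (not at least) $e_o(\mathbf{g})$. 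The clean version of the argument bypasses topological semicontinuity entirely and uses Theorem~\ref{th-int-index-leq}, which you did not cite: take a generic small $\mathbf{h}\in(\PP^{(m)})^n\subset I_1^{(m)}\times\cdots\times I_n^{(m)}$; by Lemma~\ref{lem-Nakayama}, $\mathbf{g}'=\mathbf{g}+\mathbf{h}$ has $e_o(\mathbf{g}')=e_o(\mathbf{g})$; by Proposition~\ref{prop-isolated-roots-no-sol-infinity} it has no roots at infinity and simple, isolated non-origin roots; by Theorem~\ref{th-system-no-sol-infinity}(2) the total with multiplicity is $[A^{(m)},\ldots,A^{(m)}]$; and by Theorem~\ref{th-int-index-leq} the number of its isolated roots in $X\setminus\{o\}$ is at most $[I_1^{(m)},\ldots,I_n^{(m)}]$. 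Hence
\begin{equation*}
e_o(\mathbf{g}) = e_o(\mathbf{g}') = [A^{(m)},\ldots,A^{(m)}] - \#\{\text{non-origin roots of }\mathbf{g}'\} \geq [A^{(m)},\ldots,A^{(m)}] - [I_1^{(m)},\ldots,I_n^{(m)}],
\end{equation*}
which is the generic value from part (1). This closes the gap you flagged as ``the main obstacle'' and avoids any appeal to a conservation-of-number principle under a path from $\mathbf{g}$ to $\mathbf{f}$.
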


\begin{Def} \label{def-multi-ideals}
We call the multiplicity of a generic system in $I^{(m)}_1 \times \cdots \times I^{(m)}_n$ in Theorem \ref{th-multi-ideals}, 
the {\it mixed multiplicity} $e(I_1, \ldots, I_n)$ of the ideals $I_i$. 
For  a single $\m$-primary ideal $I$, the multiplicty $e(I, \ldots, I)$ is usually denoted by $e(I)$.
\end{Def}


Our goal in the paper is to deduce some basic properties of the (local) mixed multiplicity of $\m$-primary ideals by reducing them to statements about (global) intersection index of subspaces of rational functions. 

\section{Intersection index of subspaces and mixed multiplicity of $\m$-primary ideals} \label{sec-int-index-mixed-multi}
The next theorem and its corollary are our main results that relate the mixed multiplicity of $\m$-primary ideals and intersection index of subspaces of rational functions.  
\begin{Th} \label{th-multi-int-index}
Let $I_1, \ldots, I_n$ be $\m$-primary ideals in the local ring $R$. Then for any sufficiently large $m > 0$, and  
any generic system $(f_1, \ldots, f_n) \in (I^{(m)}_1, \ldots, I^{(m)}_n)$, we have 
$e_o(f_1, \ldots, f_n) = [A^{(m)}, \ldots, A^{(m)}] - [I_1^{(m)}, \ldots, I_n^{(m)}]$. In other words,
\begin{equation} \label{equ-multi-int-index}
e(I_1, \ldots, I_n) = [A^{(m)}, \ldots, A^{(m)}] - [I_1^{(m)}, \ldots, I_n^{(m)}].
\end{equation}
\end{Th}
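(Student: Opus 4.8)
The plan is to reduce the equation $e(I_1,\dots,I_n) = [A^{(m)},\dots,A^{(m)}] - [I_1^{(m)},\dots,I_n^{(m)}]$ to a counting argument that separates the roots of a generic system into the root at the origin and the roots away from the origin. First I would fix $r>0$ with $\m^r \subset I_i$ for all $i$ and take $m \geq r$ large enough that all the genericity statements invoked below hold simultaneously; note that for such $m$ the base locus of each $I_i^{(m)}$ is exactly $\{o\}$ and $\PP^{(m)}$ (the restrictions of degree-$m$ homogeneous polynomials) is contained in each $I_i^{(m)}$ since $\m^r \supset \PP^{(m)}$ as subspaces of $R$. The key observation is that one can compute the intersection index $[I_1^{(m)},\dots,I_n^{(m)}]$ by picking a generic system $(f_1,\dots,f_n) \in I_1^{(m)} \times \cdots \times I_n^{(m)}$ and counting its roots on $X \setminus \Sigma$, where $\Sigma$ contains the origin (the common base locus) together with the singular locus and poles; by Theorem \ref{th-int-index-well-def} and Definition \ref{def-int-index} this count is $[I_1^{(m)},\dots,I_n^{(m)}]$ and is independent of the generic choice.

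Next I would bring in the "ambient" count: by Theorem \ref{th-system-no-sol-infinity}, applied to the subspace $L = I_i^{(m)}$ (which contains $\PP^{(m)}$), a generic such system has no roots at infinity on $X$, so the total number of roots of $(f_1,\dots,f_n)$ in $X$, counted with multiplicity, equals $[A^{(m)},\dots,A^{(m)}] = \deg(Y_m)$. By Proposition \ref{prop-isolated-roots-no-sol-infinity}, for a generic choice of the $f_i$ we may moreover assume all roots other than $o$ are simple (multiplicity $1$) and lie in the smooth locus, while $o$ itself is an isolated root with multiplicity $e_o(f_1,\dots,f_n)$, which by Theorem \ref{th-multi-ideals} equals the mixed multiplicity $e(I_1,\dots,I_n)$. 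Splitting the weighted count of all roots in $X$ according to whether a root is $o$ or not gives
\begin{equation*}
[A^{(m)},\dots,A^{(m)}] = e(I_1,\dots,I_n) + \#\{x \in X \setminus \{o\} \mid f_1(x) = \cdots = f_n(x) = 0\}.
\end{equation*}
Finally I would identify the second term on the right with $[I_1^{(m)},\dots,I_n^{(m)}]$: since the roots away from $o$ are simple and avoid the singular locus and poles, and since the only point of $\Sigma$ that can be a root of a generic system is $o$ (the base loci of the $I_i^{(m)}$ meet only at $o$), the roots counted in $X \setminus \Sigma$ are precisely the roots in $X \setminus \{o\}$, each with multiplicity $1$; hence that count is exactly $[I_1^{(m)},\dots,I_n^{(m)}]$ by Definition \ref{def-int-index}. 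Rearranging yields $e(I_1,\dots,I_n) = [A^{(m)},\dots,A^{(m)}] - [I_1^{(m)},\dots,I_n^{(m)}]$.

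The main obstacle I anticipate is bookkeeping the genericity conditions: one must choose a single nonempty Zariski-open subset of $I_1^{(m)}\times\cdots\times I_n^{(m)}$ on which all of the following hold at once — no roots at infinity on $X$ (Theorem \ref{th-system-no-sol-infinity}(1)), all roots in $X\setminus\{o\}$ simple and in the smooth locus (Proposition \ref{prop-isolated-roots-no-sol-infinity}), the count on $X\setminus\Sigma$ realizing the intersection index (Theorem \ref{th-int-index-well-def}), and $o$ an isolated root realizing the mixed multiplicity (Theorem \ref{th-multi-ideals}). Since a finite intersection of nonempty Zariski-open sets in an irreducible space is nonempty, this is routine once stated carefully, but it is the step where the logical care is needed; a secondary subtlety is checking, when $X$ is reducible, that on each irreducible component the same decomposition applies and the counts add, which follows from the additive structure of $\K(X) = \bigoplus_i \K(X_i)$ and the definition of the intersection index on reducible varieties recorded after Theorem \ref{th-int-index-well-def}.
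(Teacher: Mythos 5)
Your proof is correct and follows essentially the same route as the paper: a generic system from $I_1^{(m)} \times \cdots \times I_n^{(m)}$ has no roots at infinity (so the multiplicity-weighted count over all of $X$ is $[A^{(m)},\ldots,A^{(m)}]$ by Theorem \ref{th-system-no-sol-infinity}), all roots except $o$ are simple and lie off the singular locus (so their count is $[I_1^{(m)},\ldots,I_n^{(m)}]$), and the multiplicity at $o$ is the difference. One small circularity to watch: you cite Theorem \ref{th-multi-ideals} to say $e_o(f_1,\ldots,f_n) = e(I_1,\ldots,I_n)$, but in the paper that theorem is announced as proved later, precisely as a byproduct of the present argument; the cleaner phrasing is that the present computation shows $e_o(f_1,\ldots,f_n)$ equals the constant $[A^{(m)},\ldots,A^{(m)}] - [I_1^{(m)},\ldots,I_n^{(m)}]$ for all generic systems, which simultaneously establishes Theorem \ref{th-multi-ideals}(1) and, via Definition \ref{def-multi-ideals}, yields the displayed identity for $e(I_1,\ldots,I_n)$.
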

\begin{proof}
Let $m > 0$ be sufficiently large so that the base loci of any $I^{(m)}_i$ is $\{o\}$. Take a generic system 
(as in Proposition \ref{prop-isolated-roots-no-sol-infinity}) such that: (1) the system 
$f_1(x) = \cdots = f_n(x) = 0$ has not roots at infinity on $X$, and hence all its roots are isolated, 
and (3) all the roots of the system, except the origin, are simple (i.e. have multiplicity $1$). Then by Theorem 
\ref{th-system-no-sol-infinity} the number of roots of the system counted with multiplicity is $[A^{(m)}, \ldots, A^{(m)}]$. On the other hand, the number of roots of the system in $X \setminus \{o\}$ is equal to $[I^{(m)}_1, \ldots, I^{(m)}_n]$ (note that all the roots in $X \setminus \{o\}$ are simple). Thus we obtain $e(I_1, \ldots, I_n) = e_o(f_1, \ldots, f_n) =  [A^{(m)}, \ldots, A^{(m)}] - [I_1^{(m)}, \ldots, I_n^{(m)}]$ as required.
\end{proof}

From Theorem \ref{th-multi-int-index} we obtain the following corollary that expresses the mixed multiplicity of an $n$-tuple of 
$\m$-primary ideals $(I_1, \ldots, I_n)$ as the intersection index of certain classes in the 
Grothendieck group $\G(X)$ associated to $I_1, \ldots, I_n$.
\begin{Cor} \label{cor-multi-int-index}
Let $I_1, \ldots, I_n$ be $\m$-primary ideals in the local ring $R$ then for sufficiently large $m > 0$ we have:
\begin{equation} \label{equ-multi-int-index-I/A}
e(I_1, \ldots, I_n) =  -[I_1^{(m)} / A^{(m)}, \ldots, I_n^{(m)} / A^{(m)}].
\end{equation}
(Recall that $I \mapsto I^{(m)} / A^{(m)}$ is the homomorphism of groups $\iota$ in Theorem 
\ref{Groth-homomorphism}.)
\end{Cor}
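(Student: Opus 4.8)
The plan is to derive \eqref{equ-multi-int-index-I/A} from the formula \eqref{equ-multi-int-index} of Theorem~\ref{th-multi-int-index} by expanding the left-hand side via multi-additivity of the intersection index on $\G(X)$ (Definition~\ref{def-G(X)}) and then annihilating the mixed terms using the orthogonality relations of Theorem~\ref{th-orth-relation}. First I would fix an integer $r > 0$ with $\m^r \subset I_i$ for all $i$ and choose $m$ large enough that the conclusions of Theorem~\ref{th-multi-int-index} and Theorem~\ref{th-orth-relation} both hold for the given $n$-tuple; by Corollary~\ref{cor-int-index-I/A} (or Corollary~\ref{cor-k-m-large}) the quantity $[I_1^{(m)}/A^{(m)}, \ldots, I_n^{(m)}/A^{(m)}]$ is the same for all such $m$, so it suffices to establish the identity for one of them.

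Next, working in the abelian group $\G(X)$, I would expand
$$[I_1^{(m)}/A^{(m)}, \ldots, I_n^{(m)}/A^{(m)}] = \sum_{S \subseteq \{1, \ldots, n\}} (-1)^{n - |S|}\, [L^S_1, \ldots, L^S_n],$$
where $L^S_i = I_i^{(m)}$ for $i \in S$ and $L^S_i = A^{(m)}$ for $i \notin S$; this expansion is legitimate because the intersection index is multi-additive in each argument on $\G(X)$ (Theorem~\ref{th-int-index-multiadd}, Definition~\ref{def-G(X)}) and symmetric, so every summand is an honest intersection index of subspaces in $\K(X)$. For $S = \{1, \ldots, n\}$ the summand is $[I_1^{(m)}, \ldots, I_n^{(m)}]$ with coefficient $+1$. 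For every proper subset $S$, at least one argument equals $A^{(m)}$, and after permuting the arguments Theorem~\ref{th-orth-relation}(1) (with the boundary case $S = \emptyset$ being trivial, all arguments already equal to $A^{(m)}$) gives $[L^S_1, \ldots, L^S_n] = [A^{(m)}, \ldots, A^{(m)}]$. Since $\sum_{S \subsetneq \{1,\ldots,n\}} (-1)^{n-|S|} = \sum_{k=0}^{n} \binom{n}{k}(-1)^{n-k} - 1 = -1$, the expansion collapses to
$$[I_1^{(m)}/A^{(m)}, \ldots, I_n^{(m)}/A^{(m)}] = [I_1^{(m)}, \ldots, I_n^{(m)}] - [A^{(m)}, \ldots, A^{(m)}].$$
Comparing this with \eqref{equ-multi-int-index} of Theorem~\ref{th-multi-int-index} shows that the right-hand side equals $-e(I_1, \ldots, I_n)$, which is precisely \eqref{equ-multi-int-index-I/A}.

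There is no genuine obstacle here: the corollary is a formal consequence of Theorems~\ref{th-multi-int-index} and~\ref{th-orth-relation} together with the elementary binomial identity $\sum_{k=0}^{n}\binom{n}{k}(-1)^{n-k}=0$. The only points that require a little care are (i) selecting a single value of $m$ large enough for both input theorems at once, and then invoking Corollary~\ref{cor-int-index-I/A} to conclude that the stated answer is independent of $m$; and (ii) using the symmetry of the intersection index so that Theorem~\ref{th-orth-relation}(1), which is phrased with the ideal-factors occupying the first $p$ slots, still applies to each mixed term $[L^S_1, \ldots, L^S_n]$ in which the ideal-factors occupy an arbitrary subset $S$ of the slots.
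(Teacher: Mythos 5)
Your proof is correct and follows essentially the same route as the paper: expand $[I_1^{(m)}/A^{(m)}, \ldots, I_n^{(m)}/A^{(m)}]$ by multi-additivity, use the orthogonality relations (Theorem \ref{th-orth-relation}) to reduce every mixed term with at least one $A^{(m)}$-factor to $[A^{(m)}, \ldots, A^{(m)}]$, and evaluate the alternating binomial sum to $-1$ before invoking Theorem \ref{th-multi-int-index}. You simply spell out the subset-indexed expansion and the $S=\emptyset$ and symmetry caveats more explicitly than the paper's two-line display does.
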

\begin{proof}
By multi-additivity of intersection index and Theorem \ref{th-orth-relation} we have:
\begin{eqnarray*}
[I_1^{(m)} / A^{(m)}, \ldots, I_n^{(m)} / A^{(m)}] 
&=& [I_1^{(m)}, \ldots, I_n^{(m)}] + (\sum_{k=1}^{n} (-1)^{k} {n \choose k}) 
[A^{(m)}, \ldots, A^{(m)}], \cr
&=&  [I_1^{(m)}, \ldots, I_n^{(m)}] - [A^{(m)}, \ldots, A^{(m)}],\cr
\end{eqnarray*}
which is equal to $-e(I_1, \ldots, I_n)$ by Theorem \ref{th-multi-int-index}.
\end{proof}

\begin{Cor}[Mixed multiplicity and restriction] \label{cor-multi-rest}
Let $I_1, \ldots, I_n$ be $\m$-primary ideals in $R$. Let $m > 0$ be a sufficiently large integer. For any 
$1 \leq k < n$ let $(f_{k+1}, \ldots, f_n) \in I_{k+1}^{(m)} \times \cdots \times I_n^{(m)}$ be a generic $(n-k)$-tuple of functions and let $Y$ be 
the subvariety defined in a neighborhood of $o$ by the system of equations $f_{k+1}(x) = \cdots = f_n(x) = 0$.
Then: 
$$e({I_1}_{|Y}, \ldots, {I_k}_{|Y}) = e(I_1, \ldots, I_n).$$ 
\end{Cor}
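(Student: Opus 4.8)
The plan is to transport everything to the Grothendieck group side via Corollary \ref{cor-multi-int-index}, where the analogous statement is precisely the restriction theorem (Theorem \ref{th-int-index-rest}). Concretely, by Corollary \ref{cor-multi-int-index}, for $m$ sufficiently large one has
$$e(I_1, \ldots, I_n) = -[I_1^{(m)}/A^{(m)}, \ldots, I_n^{(m)}/A^{(m)}],$$
with the intersection index taken in $\G(X)$. The idea is to apply Theorem \ref{th-int-index-rest} to the $n$-tuple of elements $L_i = I_i^{(m)}/A^{(m)} \in \G(X)$, cutting with a generic $(n-k)$-tuple $(f_{k+1}, \ldots, f_n) \in I_{k+1}^{(m)} \times \cdots \times I_n^{(m)}$ to obtain a subvariety $X_{\bf f}$ of pure dimension $k$, on which
$$[I_1^{(m)}/A^{(m)}, \ldots, I_n^{(m)}/A^{(m)}] = [\,(I_1^{(m)}/A^{(m)})_{|X_{\bf f}}, \ldots, (I_k^{(m)}/A^{(m)})_{|X_{\bf f}}\,].$$
There is a small bookkeeping issue: Theorem \ref{th-int-index-rest} as stated cuts by generic elements of the subspaces $L_{k+1}, \ldots, L_n$ themselves, whereas here we want to cut by generic elements of $I_{k+1}^{(m)}, \ldots, I_n^{(m)}$. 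These are compatible because $I_i^{(m)}/A^{(m)}$ is represented (up to equivalence in $\K(X)$, which does not affect intersection index or the geometry of generic zero sets off the base locus) by the subspace $I_i^{(m)}$, and the zero set of a generic $f_i \in I_i^{(m)}$ away from $\Sigma$ is exactly the geometric object the restriction theorem uses. So the $X_{\bf f}$ of the restriction theorem agrees, near $o$, with the $Y$ of the corollary.

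The second step is to identify the restricted classes $(I_i^{(m)}/A^{(m)})_{|X_{\bf f}}$ with the classes attached to the ideals ${I_i}_{|Y}$ in the local ring $\mathcal{O}_{Y, o}$. Here one uses that restriction of rational functions is a semigroup homomorphism $\K(X) \to \K(X_{\bf f})$, hence descends to $\G(X) \to \G(X_{\bf f})$, and that it is compatible with the construction $I \mapsto I^{(m)}/A^{(m)}$: restricting polynomials of degree $\le m$ on $\A^N$ to $X$ and then to $X_{\bf f}$ is the same as restricting them directly to $X_{\bf f}$ (which is again affine in $\A^N$, through a neighborhood of $o$), so $(A^{(m)})_{|X_{\bf f}}$ is the $A^{(m)}$ of $X_{\bf f}$ and $(I_i^{(m)})_{|X_{\bf f}} = ({I_i}_{|Y})^{(m)}$ for $m$ large, using Lemma \ref{lem-IJ}-type stabilization and Proposition \ref{prop-dim-R/I} to see that the relevant subspaces only depend on the ideal for $m$ past the exponent bounding $\m^r \subset I_i$. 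One should note that $\m$-primarity is preserved under restriction: ${I_i}_{|Y}$ is primary to the maximal ideal of $\mathcal{O}_{Y,o}$ since $I_i$ contains a power of $\m$. Combining, $(I_i^{(m)}/A^{(m)})_{|X_{\bf f}} = ({I_i}_{|Y})^{(m)}/A^{(m)}$ as elements of $\G(X_{\bf f})$.

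Putting the two steps together and applying Corollary \ref{cor-multi-int-index} once more, this time on the variety $Y = X_{\bf f}$ with its local ring $\mathcal{O}_{Y,o}$:
$$e({I_1}_{|Y}, \ldots, {I_k}_{|Y}) = -[\,({I_1}_{|Y})^{(m)}/A^{(m)}, \ldots, ({I_k}_{|Y})^{(m)}/A^{(m)}\,] = -[I_1^{(m)}/A^{(m)}, \ldots, I_n^{(m)}/A^{(m)}] = e(I_1, \ldots, I_n).$$
The main obstacle I anticipate is purely in the compatibility verifications of Step 2 — namely checking carefully that the "$m$ sufficiently large" needed on $X$ also works on $X_{\bf f}$ (one must choose $m$ larger than an exponent bounding the maximal ideal power inside each ${I_i}_{|Y}$, and that the $X_{\bf f}$ produced by genericity is reduced of pure dimension $k$ with $o$ a point of it), and that one may harmlessly replace the abstract $\G(X)$-element $I_i^{(m)}/A^{(m)}$ by the concrete subspace pair when applying the restriction theorem. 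None of this is deep, but it requires the same "slight logical imprecision / reducible variety" care already flagged in the discussion surrounding Theorem \ref{th-int-index-well-def}, since $X_{\bf f}$ need not be irreducible.
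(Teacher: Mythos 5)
Your proposal is correct and follows essentially the same route as the paper: the paper's proof is the one-line observation that the statement ``Follows from Theorem \ref{th-multi-int-index} and Theorem \ref{th-int-index-rest}.'' You simply cite Corollary \ref{cor-multi-int-index} (the quotient form) instead of Theorem \ref{th-multi-int-index}, which is equivalent, and then spell out the compatibility bookkeeping that the paper leaves implicit.

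One small imprecision worth flagging: you speak of ``applying Theorem \ref{th-int-index-rest} to the $\G(X)$-elements $I_i^{(m)}/A^{(m)}$,'' but that theorem is stated for honest subspaces in $\K(X)$, not formal quotients; a formal quotient is not ``represented by'' its numerator in $\K(X)$. The clean way to do what you intend is to apply the restriction theorem separately to the $\K(X)$-tuples $(A^{(m)},\ldots,A^{(m)},I_{k+1}^{(m)},\ldots,I_n^{(m)})$ and $(I_1^{(m)},\ldots,I_n^{(m)})$, cutting by the same generic $(f_{k+1},\ldots,f_n)\in I_{k+1}^{(m)}\times\cdots\times I_n^{(m)}$ in both cases (so the cutting variety $Y = X_{\bf f}$ agrees), and then combine via multi-additivity and the orthogonality relation (Theorem \ref{th-orth-relation}) to get $[A^{(m)}_{|Y},\ldots,A^{(m)}_{|Y}] = [A^{(m)},\ldots,A^{(m)}]$, which plugged into Theorem \ref{th-multi-int-index} on $Y$ yields exactly the claim. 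This is what your argument does in effect; stated this way there is no need to push the restriction theorem through formal quotients, and the rest of your bookkeeping (that $(A^{(m)})_{|Y}$ and $(I_i^{(m)})_{|Y}$ are the degree-$\le m$ subspaces for the local ring of $Y$ at $o$, that ${I_i}_{|Y}$ is primary, and that $m$ large for $X$ remains large for $Y$) is exactly the content that the paper tacitly assumes.
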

\begin{proof}
Follows from Theorem \ref{th-multi-int-index} and Theorem \ref{th-int-index-rest}.
\end{proof}

\section{Proofs of basic properties of mixed multiplicity of ideals} \label{sec-proof-prop-mixed-multi}
In this section we show that the formula \eqref{equ-multi-int-index}, which represents the mixed multiplicity as an intersection index 
in the Grothendieck group of finite dimensional subspaces of rational functions, readily implies some basic properties of the mixed multiplicity of $\m$-primary ideals. In particular we show that our notion of mixed multiplicity (Definition \ref{def-multi-ideals}) 
coincides with classical Samuel's notion of mixed multiplicity. 
\begin{Th}[Multi-additivity of mixed multiplicity] \label{th-mixed-multi-additive}
The mixed multiplicity is multi-additive. That is, 
if $I_1', I_1'', I_2, \ldots, I_n$ are $\m$-primary ideals then:
$$e(I_1'I_1'', I_2, \ldots, I_n) = e(I_1', I_2, \ldots, I_n)+e(I_1'', I_2, \ldots, I_n).$$
\end{Th}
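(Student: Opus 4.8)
The plan is to obtain multi-additivity of $e$ as an immediate consequence of the identification of the mixed multiplicity with an intersection index in the Grothendieck group $\G(X)$. I would combine three facts that are already available: (a)~Corollary~\ref{cor-multi-int-index}, which for all sufficiently large $m$ gives $e(J_1,\ldots,J_n) = -[\iota(J_1),\ldots,\iota(J_n)]$, where $\iota(J) = J^{(m)}/A^{(m)} \in \G(X)$ (and, by Theorem~\ref{Groth-homomorphism}, $\iota$ does not depend on the choice of $m \ge r$ once $\m^r \subset J$); (b)~Theorem~\ref{Groth-homomorphism}, which asserts that $\iota\colon \G_{ideal}(R) \to \G(X)$ is a \emph{homomorphism of groups}, so that $\iota(I_1'I_1'') = \iota(I_1')\,\iota(I_1'')$ in $\G(X)$; and (c)~the multi-additivity of the intersection index with respect to the product of subspaces, extended to $\G(X)$ (Definition~\ref{def-G(X)}, building on Theorem~\ref{th-int-index-multiadd}).

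Granting these, the argument is short. First I would fix $r>0$ with $\m^r \subset I_1'$, $\m^r \subset I_1''$ and $\m^r \subset I_j$ for $2 \le j \le n$; then $\m^{2r} \subset I_1'I_1''$, and hence one integer $m$ can be chosen so that Corollary~\ref{cor-multi-int-index} applies to each of the $n$-tuples $(I_1'I_1'', I_2, \ldots, I_n)$, $(I_1', I_2, \ldots, I_n)$ and $(I_1'', I_2, \ldots, I_n)$. Using (a) and then (b),
$$e(I_1'I_1'', I_2, \ldots, I_n) = -[\iota(I_1'I_1''), \iota(I_2), \ldots, \iota(I_n)] = -[\iota(I_1')\,\iota(I_1''), \iota(I_2), \ldots, \iota(I_n)],$$
and then, applying (c) in the first argument,
$$-[\iota(I_1')\,\iota(I_1''), \iota(I_2), \ldots, \iota(I_n)] = -[\iota(I_1'), \iota(I_2), \ldots, \iota(I_n)] - [\iota(I_1''), \iota(I_2), \ldots, \iota(I_n)].$$
By (a) applied to $(I_1', I_2, \ldots, I_n)$ and to $(I_1'', I_2, \ldots, I_n)$, the right-hand side equals $e(I_1', I_2, \ldots, I_n) + e(I_1'', I_2, \ldots, I_n)$, which is the claimed identity.

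The only point that needs care is the interchange of quantifiers hidden in ``for sufficiently large $m$'': I must ensure that a single $m$ works for all three $n$-tuples at once. This is harmless bookkeeping, since for a fixed $\m$-primary ideal the conclusion of Corollary~\ref{cor-multi-int-index} holds for every $m$ exceeding an explicit threshold depending only on an exponent $r$ with $\m^r \subset I$ (through Corollary~\ref{cor-k-m-large} and the orthogonality relations of Theorem~\ref{th-orth-relation}); taking $m$ above the maximum of the finitely many thresholds attached to $I_1'$, $I_1''$, $I_1'I_1''$ and $I_2, \ldots, I_n$ suffices. Accordingly I expect no genuine obstacle here: all the substance has been absorbed into Theorem~\ref{Groth-homomorphism} and Corollary~\ref{cor-multi-int-index}, and what remains for this theorem is exactly the one-line computation above.
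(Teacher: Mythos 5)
Your proof is correct and takes exactly the route the paper intends: the paper's own proof simply states ``Follows directly from Theorem~\ref{Groth-homomorphism} and Corollary~\ref{cor-multi-int-index},'' which is precisely your combination of (a), (b), and (c). You have merely made explicit the uniform choice of $m$ and the appeal to multi-additivity of the intersection index in $\G(X)$, which the paper leaves implicit.
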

\begin{proof}
Follows directly from Theorem \ref{Groth-homomorphism} and Corollary \ref{cor-multi-int-index}.
\end{proof}

The next corollary is immediate from Theorem \ref{th-mixed-multi-additive}.
\begin{Cor}[Polynomiality of multiplicity] \label{cor-multi-polynomial}
Let $I_1, \ldots, I_n$ be $\m$-primary ideals in $R$. Then the multiplicity 
$$e(I_1^{k_1} \cdots I_n^{k_n})$$ 
is a homogeneous polynomial of degree $n$ in $k_1, \ldots, k_n$.
\end{Cor}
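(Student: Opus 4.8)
The plan is to obtain the statement as an immediate consequence of the multi-additivity of mixed multiplicity (Theorem~\ref{th-mixed-multi-additive}), combined with the convention $e(I) = e(I,\ldots,I)$ recorded in Definition~\ref{def-multi-ideals}. The only preliminary remark is that for positive integers $k_1,\ldots,k_n$ the product $J := I_1^{k_1}\cdots I_n^{k_n}$ is again $\m$-primary: if $\m^{r_i}\subset I_i$ for each $i$, then $\m^{r_1k_1+\cdots+r_nk_n}\subset J$. Hence $e(J)$ is defined, and by Definition~\ref{def-multi-ideals} it equals $e(J,J,\ldots,J)$ with $n$ copies of $J$.

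Next I would expand this mixed multiplicity one slot at a time. Writing $J$ as a product in which $I_i$ occurs $k_i$ times and applying Theorem~\ref{th-mixed-multi-additive} in the first argument gives $e(J,J,\ldots,J) = \sum_{j=1}^{n} k_j\, e(I_j, J, \ldots, J)$. Repeating the same step for the second, third, \ldots, $n$-th argument produces
\begin{equation*}
e\bigl(I_1^{k_1}\cdots I_n^{k_n}\bigr) \;=\; \sum_{(j_1,\ldots,j_n)\in\{1,\ldots,n\}^n} k_{j_1}k_{j_2}\cdots k_{j_n}\; e(I_{j_1},I_{j_2},\ldots,I_{j_n}).
\end{equation*}
The right-hand side is a fixed $\Z$-linear combination of the degree-$n$ monomials $k_{j_1}\cdots k_{j_n}$, whose coefficients are the mixed multiplicities $e(I_{j_1},\ldots,I_{j_n})$ (well-defined nonnegative integers by Theorem~\ref{th-multi-ideals}); therefore it is a homogeneous polynomial of degree $n$ in $k_1,\ldots,k_n$, which is exactly what is claimed. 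If one wishes, the $n!$ reorderings of a given multiset $\{j_1,\ldots,j_n\}$ can be grouped together, but polynomiality does not require knowing that $e$ is symmetric in its arguments.

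I do not expect any genuine obstacle here: the substance of the argument lies entirely in the already-established facts that $I \mapsto I^{(m)}/A^{(m)}$ is a semigroup homomorphism $\G_{ideal}(R)\to\G(X)$ (Theorem~\ref{Groth-homomorphism}) and that, up to sign, it carries mixed multiplicity to the multi-additive intersection index (Corollary~\ref{cor-multi-int-index}), which together yield Theorem~\ref{th-mixed-multi-additive}. The only points requiring attention are the routine bookkeeping in the slot-by-slot expansion and the trivial check that the ideals appearing throughout remain $\m$-primary, so that every mixed multiplicity in sight is defined.
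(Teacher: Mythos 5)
Your proposal is correct and takes essentially the same route as the paper: the paper records no argument for this corollary beyond the remark that it is ``immediate from Theorem~\ref{th-mixed-multi-additive},'' and your slot-by-slot expansion of $e(J,\ldots,J)$ via multi-additivity, together with the observation that $I_1^{k_1}\cdots I_n^{k_n}$ remains $\m$-primary, is exactly the routine bookkeeping that remark leaves implicit.
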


Similar to the notion of Hilbert function of a projective subvariety (in a projective space), 
one defines the Hilbert-Samuel function of an $\m$-primary ideal.
For an $\m$-primary ideal $I$, the {\it Hilbert-Samuel function}  $H_I(k)$ is defined by: $$H_I(k) = \dim_\k(R / I^k).$$
Note that since $I$ is $\m$-primary, the vector spaces $R/I^k$ are all finite dimensional.
It is well known that this function is a polynomial of degree $n$ 
for sufficiently large values of $k$. This allows one to define the {\it Samuel multiplicity} of $I$ 
algebraically as:
$$e(I) = n!~\lim_{k \to \infty} \frac{H_I(k)}{k^n}.$$
Take $\m$-primary ideals $I_1, \ldots, I_n$. 
One can show that the function $e(I_1^{k_1} \cdots I_n^{k_n})$ is a polynomial of degree $n$ in the $k_i$.  
The mixed Samuel multiplicity $e(I_1, \ldots, I_n)$ of $\m$-primary ideals $I_1, \ldots, I_n$ is defined to be the coefficient of $k_1 \dots k_n$ 
in this polynomial divided by $n!$.

Below (using Hilbert's theorem on the degree of a projective variety and our intersection theory of subspaces of rational functions) we prove that our definition 
of mixed multiplicity coincides with that of Samuel.

We state a version of Hilbert's theorem formulated in terms of
intersection index of finite dimensional subspaces of rational functions:
\begin{Th}[A version of Hilbert's theorem] \label{th-Hilbert}
Let $\Phi_L: X \ratmap \p(L^*)$ be the Kodaira map of a subspace $L \in \K(X)$ and 
let $Y_L$ denote the closure of the image of $\Phi_L$ (see Definition \ref{def-Kodaira-map} and Proposition \ref{prop-int-index-degree}). 
We assume that $\Phi_L$ is a birational isomorphism between $X$ and $Y_L$. Then for sufficiently large 
$k$, the Hilbert function $H_{Y_L}(k) = \dim_\k(L^k)$ is a polynomial of degree $n = \dim(X)$ and:
$$\deg(Y_L) = n!~\lim_{k \to \infty} \frac{H_{Y_L}(k)}{k^n}.$$ 
Here $\deg(Y_L)$ denotes the degree of $Y_L$ as a subvariety of the projective space $\p(L^*)$.
\end{Th}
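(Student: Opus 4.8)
The plan is to deduce this as essentially a restatement of the classical theorem of Hilbert on the degree of a projective variety, using the identification (Proposition \ref{prop-int-index-degree}) of $[L,\ldots,L]$ with $\deg(Y_L)$ when $\Phi_L$ is birational onto its image. So the real content reduces to two assertions: that $\dim_\k(L^k)$ agrees with the dimension of the degree-$k$ graded piece of the homogeneous coordinate ring of $Y_L$ for large $k$, and then that the standard Hilbert polynomial of $Y_L \subset \p(L^*)$ has leading term $\frac{\deg(Y_L)}{n!}k^n$.

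First I would set up the graded ring. Choosing a basis $f_0, \ldots, f_d$ of $L$, the Kodaira map $\Phi_L$ sends $x$ to $(f_0(x): \cdots : f_d(x))$, and $Y_L$ is the closure of the image in $\p^d = \p(L^*)$. The homogeneous coordinate ring $S = \bigoplus_k S_k$ of $Y_L$ is the image of the polynomial ring $\k[z_0,\ldots,z_d]$ under $z_i \mapsto f_i$; concretely $S_k$ is spanned by the degree-$k$ monomials in the $f_i$, which is exactly the subspace $L^k$ of $\k(X)$. Here one must check that distinct monomials that become linearly dependent in $\k(X)$ are counted correctly — but this is automatic, since $S_k$ is by definition the span of these products inside $\k(X)$ (the linear relations among them are precisely the degree-$k$ part of the homogeneous ideal of $Y_L$). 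Hence $\dim_\k(L^k) = \dim_\k S_k = H_{Y_L}(k)$, the Hilbert function of $Y_L$, where the last equality holds because $\Phi_L$ is birational onto $Y_L$ so that $Y_L$ has the same dimension $n$ as $X$ and $S$ is its honest homogeneous coordinate ring.

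Next I would invoke Hilbert's theorem in its classical form: for a projective variety $Y$ of dimension $n$ in $\p^d$, the Hilbert function $H_Y(k) = \dim_\k S_k$ agrees with a polynomial of degree $n$ for $k \gg 0$, and the leading coefficient of that polynomial equals $\deg(Y)/n!$, where $\deg(Y)$ is the number of points in $Y \cap H$ for a generic linear subspace $H$ of complementary dimension. Combining with the previous paragraph, $H_{Y_L}(k) = \dim_\k(L^k)$ is a polynomial of degree $n$ for large $k$ and $\deg(Y_L) = n!\lim_{k\to\infty} H_{Y_L}(k)/k^n$. Since by Proposition \ref{prop-int-index-degree} this degree also equals $[L,\ldots,L]$, the formula in the theorem follows.

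The main obstacle — really the only subtle point — is the bookkeeping that identifies $L^k$ as defined in $\K(X)$ (a subspace of the function field, with possible collapsing of products) with the degree-$k$ graded piece of the coordinate ring of $Y_L$, i.e. making sure that the ``birational isomorphism'' hypothesis is genuinely what guarantees that $\dim S_k$ computes the Hilbert function of a variety of the correct dimension $n$ rather than of a projection of lower dimension or of a variety with a thickened structure. Once that bridge is in place, the statement is exactly the classical Hilbert theorem. I would therefore spend the proof on carefully stating the correspondence $S_k \cong L^k$ and citing Hilbert's theorem, rather than reproving the polynomiality of the Hilbert function.
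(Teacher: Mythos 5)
Your proposal is correct, and it is essentially the argument the paper has in mind: the paper states Theorem \ref{th-Hilbert} without proof, treating it as a reformulation of the classical Hilbert theorem, and you have simply spelled out the bookkeeping that makes the reformulation honest. The crux — that under $z_i \mapsto f_i$ the kernel in degree $k$ is exactly $I(Y_L)_k$, so $L^k \cong \k[z]_k/I(Y_L)_k = S_k$, and that the birationality hypothesis forces $\dim Y_L = n$ so that the Hilbert polynomial has degree $n$ rather than something smaller — is exactly what needs to be said, and you say it. One small point worth noting (though it does not affect the irreducible case, which is what one really needs here): when $X$ is reducible, $L^k$ as an element of $\K(X)$ means $\bigoplus_i (L_{|X_i})^k$ by Remark \ref{rem-L-vs-sum-Li}, so $\dim_\k(L^k) = \sum_i H_{Y_{L_i}}(k)$, which differs from the Hilbert function of the scheme-theoretic union $Y_L = \bigcup Y_{L_i}$ by lower-order inclusion-exclusion terms; this does not change the leading term, so the displayed degree formula still holds, but it means $H_{Y_L}(k)$ in the statement should be read as this componentwise sum rather than literally as $\dim_\k S(Y_L)_k$.
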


\begin{Th} \label{th-Samuel-vs-ours}
The Samuel notion of mixed multiplicity coincides with the notion of mixed multiplicity in Definition \ref{def-multi-ideals}.
\end{Th}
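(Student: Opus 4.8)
The plan is to show that the mixed multiplicity $e(I_1,\ldots,I_n)$ of Definition \ref{def-multi-ideals} satisfies the two defining properties of the Samuel mixed multiplicity, namely that $e(I_1^{k_1}\cdots I_n^{k_n})$ is a homogeneous polynomial of degree $n$ in the $k_i$ and that $e(I,\ldots,I)$ equals the Samuel multiplicity $n!\lim_{k\to\infty} H_I(k)/k^n$; since the Samuel mixed multiplicity is by definition the coefficient of $k_1\cdots k_n$ in that polynomial divided by $n!$, and a symmetric multi-additive function of the ideals is determined by its diagonal values, establishing these two facts forces the two notions to agree. Multi-additivity of our $e$ is already available from Theorem \ref{th-mixed-multi-additive}, and the polynomiality of $e(I_1^{k_1}\cdots I_n^{k_n})$ is Corollary \ref{cor-multi-polynomial}; so the entire content reduces to the single diagonal identity
\begin{equation*}
e(I,\ldots,I) \;=\; n!\lim_{k\to\infty}\frac{\dim_\k(R/I^k)}{k^n}.
\end{equation*}

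To prove this identity, first I would fix $m$ large enough that $\m^r\subset I$ with $m\ge r$, so that by Corollary \ref{cor-multi-int-index} one has $e(I,\ldots,I) = -[I^{(m)}/A^{(m)},\ldots,I^{(m)}/A^{(m)}] = [A^{(m)},\ldots,A^{(m)}] - [I^{(m)},\ldots,I^{(m)}]$ via Theorem \ref{th-multi-int-index}. The first term is the degree of the Veronese-type variety $Y_m$, which by Theorem \ref{th-Hilbert} (applied after passing to a subspace large enough that the Kodaira map is birational onto its image — we may replace $A^{(m)}$ by $A^{(m')}$ for $m'\gg m$ using Corollary \ref{cor-k-m-large} and multi-additivity, or note $\Phi_{A^{(m)}}$ is already birational for $m\ge 1$) equals $n!\lim_{k\to\infty}\dim_\k((A^{(m)})^k)/k^n = n!\lim_{k\to\infty}\dim_\k(A^{(mk)})/k^n$, the latter because $(A^{(m)})^k$ and $A^{(mk)}$ are equivalent subspaces, indeed have the same image on each component. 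Similarly I would identify $[I^{(m)},\ldots,I^{(m)}]$, after replacing $I^{(m)}$ by an equivalent large enough subspace so the Kodaira map is birational, with $n!\lim_{k\to\infty}\dim_\k((I^{(m)})^k)/k^n$; by Lemma \ref{lem-IJ} (with $p=q=m$, $m\ge 2r$), $(I^{(m)})^k = (I^k)^{(mk)}$, so this limit is $n!\lim_{k\to\infty}\dim_\k((I^k)^{(mk)})/k^n$.

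Now subtract: by Proposition \ref{prop-dim-R/I}, for each $k$ with $mk$ large enough relative to the exponent needed for $I^k$ (which holds since $\m^{rk}\subset I^k$ and $mk\ge rk$), we have $\dim_\k(A^{(mk)}/(I^k)^{(mk)}) = \dim_\k(R/I^k) = H_I(k)$. Hence
\begin{equation*}
[A^{(m)},\ldots,A^{(m)}] - [I^{(m)},\ldots,I^{(m)}]
= n!\lim_{k\to\infty}\frac{\dim_\k(A^{(mk)}) - \dim_\k((I^k)^{(mk)})}{k^n}
= n!\lim_{k\to\infty}\frac{H_I(k)}{k^n},
\end{equation*}
which is exactly the Samuel multiplicity $e(I)$; combined with $e(I,\ldots,I) = [A^{(m)},\ldots,A^{(m)}] - [I^{(m)},\ldots,I^{(m)}]$ this gives the diagonal identity and completes the proof. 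The main obstacle I anticipate is the legitimacy of applying Theorem \ref{th-Hilbert} to the subspaces $A^{(m)}$ and $I^{(m)}$: that theorem requires the Kodaira map to be a birational isomorphism onto its image, which need not hold for the original subspace. I would handle this by enlarging the subspaces within their equivalence classes in $\K(X)$ — replacing $A^{(m)}$ and $I^{(m)}$ by $A^{(m')}$ and $I^{(m')}$ for $m'$ large, or by including enough separating coordinate functions — so that birationality holds while the intersection indices and the relevant dimension counts are unchanged (the intersection index depends only on the class in $\G(X)$, and the dimension of $R/I^k$ is intrinsic). A secondary routine point is bookkeeping that all the "sufficiently large $m$" and "sufficiently large $k$" thresholds can be met simultaneously, which follows since $mk\to\infty$ with $k$ for fixed $m$.
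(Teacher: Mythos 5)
Your proposal is essentially the paper's proof: both reduce the problem, via multi-additivity and polynomiality, to the diagonal identity $e(I,\ldots,I) = n!\lim_{k\to\infty}\dim_\k(R/I^k)/k^n$, and both establish that identity by combining Proposition \ref{prop-dim-R/I}, the relations $A^{(mk)} = (A^{(m)})^k$ and $(I^k)^{(mk)} = (I^{(m)})^k$, Theorem \ref{th-Hilbert}, and Theorem \ref{th-multi-int-index}. The one genuine difference is how you handle the birationality hypothesis of Theorem \ref{th-Hilbert} for $\Phi_{I^{(m)}}$. You propose either increasing $m$, or swapping $I^{(m)}$ for a larger but merely equivalent subspace in $\K(X)$. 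The second option would actually break the argument: the equality $\dim_\k(R/I^k) = \dim_\k((A^{(m)})^k)-\dim_\k((I^{(m)})^k)$ coming from Proposition \ref{prop-dim-R/I} uses the literal subspace $I^{(m)}$, not its equivalence class, and replacing $I^{(m)}$ by an equivalent subspace changes $\dim_\k((I^{(m)})^k)$ for finite $k$ even though it leaves intersection indices untouched. The paper closes this gap cleanly with Lemma \ref{lem-I(m)-embedding}, which shows directly that for $m$ sufficiently large $\Phi_{I^{(m)}}$ is already birational (indeed an embedding away from the origin), so no replacement is needed. So: your first workaround (just take $m$ large) is exactly right and is what the paper does, but you should discard the alternative of passing to an equivalent subspace, since it is incompatible with the dimension bookkeeping.
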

\begin{proof}
We need the following:
\begin{Lem} \label{lem-I(m)-embedding}
If $m>0$ is sufficiently large then the Kodaira map $\Phi_{I^{(m)}}$ gives a birational isomorphism between $X$ and $Y_{I^{(m)}}$, the closure of 
image of $X$ under $\Phi_{I^{(m)}}$. Moreover the restriction of $\Phi_{I^{(m)*}}$ to $X \setminus \{0\}$ 
gives an embedding of $X \setminus \{0\}$ to $\p(I^{(m)*})$.
\end{Lem}
\begin{proof}
It suffices to show that for sufficiently large $m > 0$, $\Phi_{I^{(m)}}$ is one-to-one, i.e. functions in $I^{(m)}$ separate generic points of $X$. Let $r > 0$ be such that $\m^r \subset I$. One verifies that the subspace $$\M = \{ p \in \k[x_1, \ldots, x_N] \mid r \leq \deg(p) \leq 2r \},$$ generates the algebra $\AA_{\geq r}$ of polynomials all whose terms have degree bigger than or equal to $r$. Hence the image of $\M$ in $R$ generates the algebra $A_{\geq r}$, the image of $\AA_{\geq r}$ in $R$. But functions in $A_{\geq r}$ separate generic elements of $X$. This finishes the proof.    
\end{proof}
By Lemma \ref{lem-I(m)-embedding} we can choose $m > 0$ large enough so that 
the Kodaira map $\Phi_{I^{(m)}}$ gives a birational embedding of $X$ in the projective space $\p({I^{(m)}}^*)$. Now 
by Proposition \ref{prop-dim-R/I}, $\dim_\k(R / I^k) = \dim_\k(A^{(km)} / (I^k)^{(km)})$. Moreover, 
$A^{(mk)} = (A^{(m)})^k$ and $(I^k)^{(km)} = (I^{(m)})^k$. 
Thus, $$\dim_\k(R / I^k) = \dim_\k((A^{(m)})^k / (I^{(m)})^k) = \dim_\k((A^{(m)})^k) - \dim_\k((I^{(m)})^k).$$
The theorem now follows from Theorem \ref{th-Hilbert} and Theorem \ref{th-multi-int-index}.
\end{proof}

\section{Alexandrov-Fenchel inequality for mixed multiplicities} \label{sec-AF-ideal}
In this section we prove a (reverse) Alexandrov-Fenchel inequality for mixed multiplicities of $\m$-primary ideals.
As usual $R$ is the local ring $\mathcal{O}_{X, a}$ of a point $a$ on a (possibly reducible) algebraic variety $X$ of dimension $n$.
Without loss of generality, we assume all the irreducible components of $X$ have the same dimension.
\begin{Th}[Alexandrov-Fenchel inequality for mixed multiplicities] \label{th-AF-multi}
Let $I_1, I_2, \ldots, I_n$ be $\m$-primary ideals in the local ring $R$. The following inequality holds between the mixed multiplicities:
$$e(I_1, I_1, I_3, \ldots, I_n) e(I_2, I_2, I_3, \ldots, I_n) \geq e(I_1, I_2, I_3, \ldots, I_n)^2.$$
\end{Th}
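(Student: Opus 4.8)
The plan is to reduce the Alexandrov-Fenchel inequality for mixed multiplicities to the Hodge-type inequality for intersection indices on an irreducible surface (Theorem \ref{th-Hodge}), by successively cutting down the dimension of $X$ using the restriction theorem. Throughout we use the identity from Corollary \ref{cor-multi-int-index}, namely $e(J_1, \ldots, J_n) = -[J_1^{(m)}/A^{(m)}, \ldots, J_n^{(m)}/A^{(m)}]$ for $m$ sufficiently large, which transports the whole problem into the Grothendieck group $\G(X)$, where the intersection index is multi-additive.

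First I would handle the case $n = 2$ directly. Here there are no extra ideals $I_3, \ldots, I_n$, and one must compare $e(I_1, I_1)\, e(I_2, I_2)$ with $e(I_1, I_2)^2$. By Corollary \ref{cor-multi-int-index}, writing $D_i = I_i^{(m)}/A^{(m)} \in \G(X)$, this is exactly the inequality $[D_1, D_1][D_2, D_2] \geq [D_1, D_2]^2$ on the (possibly reducible) surface $X$. Since $X$ has pure dimension $2$, and since the intersection index and the classes $D_i$ decompose as direct sums over the irreducible components $X_j$ (by Definition \ref{def-K(X)} and the corresponding decomposition of $\G(X)$), the inequality on $X$ follows from the inequality on each component once we know it holds on each irreducible surface. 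On an irreducible surface, the classes $D_i$ are differences of genuine subspaces in $\K(X_j)$; the Hodge inequality Theorem \ref{th-Hodge} is stated for subspaces, but it extends to the Grothendieck group by the standard bilinearity argument (writing $D_i = L_i/M_i$ and expanding both sides, the inequality $[D_1,D_1][D_2,D_2] \ge [D_1,D_2]^2$ for a symmetric bilinear form of signature type follows from its validity on the positive cone together with bi-additivity — this is the usual deduction of the general Hodge index statement from the one for effective classes). The summation over components then gives the $n=2$ case, using that if $a_jb_j \geq c_j^2$ with $a_j, b_j \geq 0$ then $\big(\sum a_j\big)\big(\sum b_j\big) \geq \big(\sum c_j\big)^2$ by Cauchy-Schwarz.

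For general $n \geq 3$, I would induct on $n$, cutting down by the ideals $I_3, \ldots, I_n$ one at a time. Fix $m$ large and pick a generic $f_n \in I_n^{(m)}$; let $Y = \{f_n = 0\}$ be the subvariety cut out near $o$, which by the restriction theorem (Theorem \ref{th-int-index-rest}) and Corollary \ref{cor-multi-rest} has pure dimension $n-1$, and on which mixed multiplicity of the restricted ideals computes the mixed multiplicity on $X$ with the last slot filled by $I_n$. Concretely, Corollary \ref{cor-multi-rest} gives $e({I_1}_{|Y}, \ldots, {I_{n-1}}_{|Y}) = e(I_1, \ldots, I_{n-1}, I_n)$, and applying the same restriction to the other two mixed multiplicities we get $e(I_1, I_1, I_3, \ldots, I_n) = e({I_1}_{|Y}, {I_1}_{|Y}, {I_3}_{|Y}, \ldots, {I_{n-1}}_{|Y})$ and likewise for $e(I_2, I_2, I_3, \ldots, I_n)$. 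Thus the AF inequality on $X$ for the $n$-tuple follows from the AF inequality on $Y$ for the $(n-1)$-tuple $({I_1}_{|Y}, \ldots, {I_{n-1}}_{|Y})$, and we are done by the inductive hypothesis (the base case $n=2$ being established above). One must check that $Y$ is again of pure dimension, that the origin remains the relevant point with maximal ideal the restriction of $\m$, and that "sufficiently large $m$" can be chosen compatibly for all three mixed multiplicities at once — these are routine once the restriction theorem is invoked for a common generic choice.

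The main obstacle I anticipate is the base case rather than the induction: namely, upgrading Theorem \ref{th-Hodge}, which is stated for honest finite-dimensional subspaces $L, M \in \K(X)$, to arbitrary elements of the Grothendieck group $\G(X)$ (differences of subspaces), and doing so uniformly over the irreducible components of a reducible surface. The subtlety is that $D_i = I_i^{(m)}/A^{(m)}$ need not be represented by an effective class, so one genuinely needs the bilinear-form version of Hodge, deduced by polarization from the effective case; one should be slightly careful that the deduction only uses that the quadratic form $[D,D]$ restricted to the relevant two-dimensional subspace spanned by $D_1, D_2$ has the right signature, which in turn follows from $[L,L] > 0$ for $L$ with large enough base-point-free part — a point where one appeals to $\Phi_{A^{(m)}}$ being birational onto a surface of positive degree (Theorem \ref{th-system-no-sol-infinity}(2)). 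Everything else is bookkeeping.
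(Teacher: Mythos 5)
Your overall reduction is the same as the paper's: restrict to a surface via Corollary \ref{cor-multi-rest} and Theorem \ref{th-int-index-rest}, transport the statement into the Grothendieck group via Corollary \ref{cor-multi-int-index}, and appeal to the Hodge-type inequality on an irreducible surface, then sum over components. Doing the restriction one step at a time rather than all at once (as the paper does, with a single application of Corollary \ref{cor-multi-rest} with $k=2$) is a harmless variation. The part where you sum over components by Cauchy--Schwarz is also fine.

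The gap is in the base case, and it is the crux of the whole proof. After translating by Corollary \ref{cor-multi-int-index} you correctly see that you need the \emph{forward} Cauchy--Schwarz inequality $[D_1,D_1][D_2,D_2] \geq [D_1,D_2]^2$ for the classes $D_j = I_j^{(m)}/A^{(m)}$, whereas Theorem \ref{th-Hodge} gives the \emph{reverse} inequality $[L,L][M,M] \leq [L,M]^2$ for honest subspaces. You propose to bridge this by ``polarization'' or ``bi-additivity'' from the effective case, but inequalities do not extend by linearity, and in fact the forward inequality is \emph{false} for general pairs of classes in a space where the intersection form has Lorentzian signature $(1,-,\dots,-)$: take $b(x,y)=x_1y_1 - x_2y_2$, $D_1=(1,0)$, $D_2=(0,1)$, giving $(1)(-1) < 0$. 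Knowing that ``the span of $D_1,D_2$ has the right signature'' is exactly what you must establish, and it does not follow from the existence of a positive class $A^{(m)}$; on the contrary, a positive direction is what would ruin definiteness. The missing ingredient is the orthogonality relation of Theorem \ref{th-orth-relation}: one has $[I_j^{(m)}/A^{(m)}, A^{(m)}, \dots, A^{(m)}]=0$, so after restricting to the surface the $D_j$ lie in the hyperplane orthogonal to the positive class $A^{(m)}_{|Y_i}$. By the elementary Lemma \ref{lem-neg-def} (a Lorentzian form is negative semi-definite on the orthogonal complement of any positive vector), the intersection form is negative semi-definite there, and only then does the forward Cauchy--Schwarz apply. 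Your proposal never invokes Theorem \ref{th-orth-relation} or any orthogonality at all, so as written the base case does not go through.
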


We prove this inequality using the Hodge inequality for intersection indices of subspaces (Theorem \ref{th-Hodge}).
We need the following simple lemma from linear algebra which we state without proof.
\begin{Lem} \label{lem-neg-def}
Let $V$ be a finite dimensional vector space over $\r$ equipped with a symmetric bilinear form $b( \cdot, \cdot)$ 
with only one positive eigenvalue i.e. the rest of eigenvalues are negative or zero.
Fix a vector $v \in V$ with $b(v, v) > 0$. Let $W = \{ w \in V \mid b(v, w) = 0\}$ be the orthogonal subspace of $v$. Then the bilinear form 
$b$ restricted to the subspace $W$ is negative semi-definite. 
\end{Lem}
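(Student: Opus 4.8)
The plan is to reduce the statement to the ordinary Cauchy--Schwarz inequality after diagonalizing $b$. By Sylvester's law of inertia one can choose a $b$-orthogonal basis $e_0, e_1, \ldots, e_{N-1}$ of $V$, where $N = \dim V$, such that $b(e_0, e_0) = 1$, $b(e_i, e_i) = -1$ for $1 \le i \le q$, and $b(e_j, e_j) = 0$ for $q < j \le N-1$; here $q \ge 0$ is the number of negative eigenvalues of $b$, and the hypothesis that $b$ has exactly one positive eigenvalue is precisely what forces $e_0$ to be the only basis vector with positive self-pairing.

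First I would extract the numerical content of the hypothesis on $v$. Writing $v = \sum_i c_i e_i$ we have $b(v,v) = c_0^2 - \sum_{i=1}^q c_i^2 > 0$, so $c_0 \neq 0$ and $\sum_{i=1}^q c_i^2 \le c_0^2$. Now let $w \in W$ be arbitrary and write $w = \sum_i d_i e_i$. The condition $w \in W$ reads $0 = b(v,w) = c_0 d_0 - \sum_{i=1}^q c_i d_i$, i.e. $c_0 d_0 = \sum_{i=1}^q c_i d_i$. Applying Cauchy--Schwarz to $(c_1, \ldots, c_q)$ and $(d_1, \ldots, d_q)$ in $\r^q$, together with the bound $\sum_{i=1}^q c_i^2 \le c_0^2$, gives
$$c_0^2 d_0^2 = \Bigl(\sum_{i=1}^q c_i d_i\Bigr)^2 \le \Bigl(\sum_{i=1}^q c_i^2\Bigr)\Bigl(\sum_{i=1}^q d_i^2\Bigr) \le c_0^2\sum_{i=1}^q d_i^2.$$
Dividing by $c_0^2 \neq 0$ yields $d_0^2 \le \sum_{i=1}^q d_i^2$, hence $b(w,w) = d_0^2 - \sum_{i=1}^q d_i^2 \le 0$. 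Since $w \in W$ was arbitrary, $b|_W$ is negative semi-definite.

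I do not anticipate a genuine obstacle; the argument is elementary, and the only mild subtlety is keeping track of the degenerate directions $e_j$ with $j > q$. These contribute nothing to $b(w,w)$, which is exactly why one obtains \emph{semi}-definiteness rather than definiteness: the radical of $b$ is contained in $W$ as soon as $b(v,v) \neq 0$ and lies in the kernel of $b|_W$. Alternatively one could argue in a coordinate-free way by passing to the quotient of $V$ by the radical of $b$, on which $b$ induces a nondegenerate Lorentzian form, and invoking the standard fact that the orthogonal complement of a vector of positive self-pairing in a Lorentzian space is negative definite; but the diagonalization computation above is shorter and fully self-contained, so I would present that one.
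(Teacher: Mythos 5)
Your argument is correct and complete: diagonalizing $b$ via Sylvester's law of inertia, extracting $\sum_{i=1}^q c_i^2 \le c_0^2$ from $b(v,v)>0$, and combining the orthogonality relation $c_0 d_0 = \sum_{i=1}^q c_i d_i$ with Cauchy--Schwarz to get $d_0^2 \le \sum_{i=1}^q d_i^2$ is exactly the standard route, and your handling of the degenerate directions is right. There is nothing in the paper to compare against --- the authors explicitly state this lemma \emph{without proof} --- so your write-up simply supplies the omitted elementary verification.
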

\begin{proof}[Proof of Theorem \ref{th-AF-multi}]
Let $m>0$ be a sufficiently large integer and 
let $Y$ be as in Corollary \ref{cor-multi-rest}. By Theorem \ref{th-int-index-rest} all the irreducible components of $Y$ have dimension $2$.
Let $Y_i$ be an irreducible component of $Y$ and let $V_i$ denote the vector space spanned by 
$A^{(m)}_{|Y_i}, {I_1}^{(m)}_{|Y_i}, {I_2}^{(m)}_{|Y_i}$.
By the Hodge inequality (Theorem \ref{th-Hodge}) we know that the intersection index $[\cdot, \cdot]$ on $Y_i$, regarded as a bilinear form on $V_i$,
has one positive eigenvalue and all other eigenvalues are negative or zero. From Lemma \ref{lem-neg-def} the intersection 
index on $Y_i$ restricted to the orthogonal space of $A^{(m)}_{|Y_i}$ is negative semi-definite. 
Since the sum of negative semi-definite bilinear forms is again negative semi-definite, it follows that 
the intersection index on $Y$ restricted to the orthogonal space of $A^{(m)}_{|Y}$ is negative semi-definite. Hence it satisfies the 
Cauchy-Schwartz inequality. We note that by Theorem \ref{th-orth-relation}, the elements ${I_j}^{(m)}_{|Y} / A^{(m)}_{|Y}$, $j=1,2$, 
lie in the orthogonal space to $A^{(m)}_{|Y}$. 
The claim now follows from Corollary \ref{cor-multi-int-index} and Corollary \ref{cor-multi-rest}.
\end{proof}

\section{Applications to convex geometry} \label{sec-convex-geo}
In this section we use Theorem \ref{th-AF-multi} (Alexandrov-Fenchel inequality for mixed multiplicities) to give an alternative proof of the Alexandrov-Fenchel inequality for covolumes of convex bodies proved in \cite{Askold-Vladlen}.

Let $C$ be a closed strictly convex cone of dimension $n$ with apex at the origin in the Euclidean space $\r^n$ 
(strictly convex cone means it is convex and does not contain any lines through the origin).
\begin{Def}[Convex region in a cone]
We call a subset $\Gamma \subset C$ a {\it $C$-convex region}, if $\Gamma$ is closed and 
convex and moreover for any $x \in \Gamma$ and $y \in C$ we have $x+y \in \Gamma$. 
One also refers to the set $C \setminus \Gamma$ as a {\it coconvex set} (with respect to the cone $C$). 
We call a $C$-convex region {\it cobounded} if the completment $C \setminus \Gamma$ is bounded. 
We call the volume of $C \setminus \Gamma$ the {\it covolume} of the convex region $\Gamma$ and denote it by $\covol(\Gamma)$.
\end{Def}

\begin{Rem} \label{rem-convex-region-ideal}
The notion of a $C$-convex region in a cone $C$ is an analogue of an ideal in a ring. The notion of cobounded convex region is an analogue of an $\m$-primary ideal.
\end{Rem}

It is easy to verify the following: 

\begin{Prop} \label{prop-convex-region-closed}
the collection of $C$-convex regions is closed under addition and multiplication by a positive scalar. That is, if $\Gamma_1, \Gamma_2 \subset C$ are $C$-convex regions and $\lambda_1, \lambda_2 > 0$ then 
$$\lambda_1\Gamma_1 + \lambda_2 \Gamma_2 = \{ \lambda_1 x_1 + \lambda_2 x_2 
\mid x_1 \in \Gamma_1,~x_2 \in \Gamma_2\}$$
is also a $C$-convex region. Moreover, if $\Gamma_1$, $\Gamma_2$ are cobounded then $\lambda_1 \Gamma_1 + \lambda_2 \Gamma_2$ is also cobounded.
\end{Prop}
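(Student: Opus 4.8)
The statement to prove is Proposition~\ref{prop-convex-region-closed}, which asserts that $C$-convex regions are closed under positive scalar multiplication and Minkowski addition, and that coboundedness is preserved.

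The plan is to verify each of the three defining properties of a $C$-convex region directly from the definitions, in the order: closedness, convexity, and the absorption property $\Gamma + C \subset \Gamma$; then finish with coboundedness. First I would handle the easy structural points. Convexity of $\lambda_1\Gamma_1 + \lambda_2\Gamma_2$ is immediate: a positive scalar multiple of a convex set is convex, and the Minkowski sum of two convex sets is convex (a one-line check using $tx + (1-t)y$ componentwise on each summand). The absorption property is equally direct: if $z = \lambda_1 x_1 + \lambda_2 x_2$ with $x_i \in \Gamma_i$ and $y \in C$, then writing $y = \lambda_1(\tfrac{1}{\lambda_1 + \lambda_2}y) + \lambda_2(\tfrac{1}{\lambda_1+\lambda_2}y)$ — or more simply $y = \lambda_1 y' + \lambda_2 y''$ with $y', y'' \in C$ chosen as suitable positive multiples of $y$, using that $C$ is a cone — gives $z + y = \lambda_1(x_1 + y') + \lambda_2(x_2 + y'') \in \lambda_1\Gamma_1 + \lambda_2\Gamma_2$, since each $x_i + y_i'' \in \Gamma_i$ by the absorption property of $\Gamma_i$. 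Also $\lambda_1\Gamma_1 + \lambda_2\Gamma_2 \subset C$ since $C$ is a convex cone closed under the same operations.

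The one genuinely nontrivial point is closedness: the Minkowski sum of two closed sets need not be closed in general. Here I would exploit the cone structure. The key observation is that, because $C$ is \emph{strictly} convex (contains no line) and each $\Gamma_i$ is a $C$-convex region, the ``recession directions'' of $\Gamma_i$ all lie in $C$, and $C$ contains no opposite pair of nonzero directions; hence $\Gamma_1$ and $\Gamma_2$ have no opposing recession directions. The standard fact from convex geometry is that if two closed convex sets have no common line in the sum of their recession cones pointing in opposite senses — equivalently, if $\mathrm{rec}(\Gamma_1) \cap (-\mathrm{rec}(\Gamma_2)) = \{0\}$ — then $\Gamma_1 + \Gamma_2$ is closed. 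Since $\mathrm{rec}(\Gamma_i) \supseteq C$ and in fact one checks $\mathrm{rec}(\Gamma_i) = C$ when $\Gamma_i$ is cobounded (and in general $\mathrm{rec}(\Gamma_i) \subseteq C$ because $\Gamma_i \subset C$), strict convexity of $C$ forces $\mathrm{rec}(\Gamma_1) \cap (-\mathrm{rec}(\Gamma_2)) \subseteq C \cap (-C) = \{0\}$, giving closedness. I expect this recession-cone argument to be the main obstacle, in the sense that it is the only step requiring a genuine appeal to convexity beyond bookkeeping; everything else is a formal manipulation of the definition.

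Finally, for coboundedness: suppose $C \setminus \Gamma_i$ is bounded, say contained in a ball of radius $\rho_i$ about the origin, for $i = 1, 2$. I claim $C \setminus (\lambda_1\Gamma_1 + \lambda_2\Gamma_2)$ is bounded. Indeed, pick any point $p \in \Gamma_1 \cap \Gamma_2$ (nonempty, e.g. far enough out along $C$). If $z \in C$ has sufficiently large norm, then $z - \lambda_2 p' $ for appropriate rescaling lies deep inside $C$, and one shows $z \in \lambda_1\Gamma_1 + \lambda_2\Gamma_2$ by writing $z = \lambda_1 w_1 + \lambda_2 w_2$ with each $w_i \in C$ of large norm, hence $w_i \in \Gamma_i$ since $C \setminus \Gamma_i$ is bounded. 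Concretely, set $w_1 = w_2 = z/(\lambda_1+\lambda_2) \in C$; once $\|z\|$ exceeds $(\lambda_1+\lambda_2)\max(\rho_1,\rho_2)$ both $w_i$ avoid $C \setminus \Gamma_i$, so $w_i \in \Gamma_i$ and $z = \lambda_1 w_1 + \lambda_2 w_2 \in \lambda_1\Gamma_1 + \lambda_2\Gamma_2$. Thus the complement within $C$ is contained in a ball, proving coboundedness and completing the proof.
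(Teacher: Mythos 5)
Your proof is correct. The paper itself gives no proof of Proposition~\ref{prop-convex-region-closed}; it is introduced only with ``It is easy to verify the following,'' so there is no argument in the text to compare against, but your write-up correctly identifies and handles the one non-formal point, namely closedness of the Minkowski sum. Your observation that $\mathrm{rec}(\Gamma_i)\subseteq C$ (indeed $\mathrm{rec}(\Gamma_i)=C$, by absorption) together with strict convexity of $C$ giving $C\cap(-C)=\{0\}$ is exactly what makes the standard recession-cone closedness criterion apply; one could equally well argue by contradiction with a sequence $x_k+y_k\to z$ and extracting a common escape direction, which is the same idea unwound. The convexity, absorption, containment-in-$C$, and coboundedness steps are all correct as written. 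Two small blemishes worth cleaning up: the parenthetical about choosing $p\in\Gamma_1\cap\Gamma_2$ and the expression $z-\lambda_2 p'$ is a false start that plays no role in the final coboundedness argument (the $w_1=w_2=z/(\lambda_1+\lambda_2)$ device is self-contained), and in the absorption step the notation $x_i+y_i''$ should read $x_1+y'$ and $x_2+y''$.
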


We will prove the following using multi-additivity of the intersection index.
\begin{Th}[Covolume is polynomial] \label{th-covol-poly}
Let $\Gamma_1, \ldots, \Gamma_n$ be cobounded $C$-convex regions. Then the function
\begin{equation} \label{equ-covol-poly}
P(\lambda_1, \ldots, \lambda_n) = \covol(\lambda_1 \Gamma_1 + \cdots + \lambda_n \Gamma_n),
\end{equation}
is a homogeneous polynomial of degree $n$ in the $\lambda_i$. 
\end{Th}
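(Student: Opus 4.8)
The plan is to reduce Theorem \ref{th-covol-poly} to the polynomiality of mixed multiplicities (Corollary \ref{cor-multi-polynomial}) via the toric dictionary sketched in the introduction. First I would reduce to the case of rational cones and rational convex regions: a general strictly convex cone $C$ and general $C$-convex regions $\Gamma_i$ can be approximated (in the sense that the covolume $\covol(\lambda_1\Gamma_1 + \cdots + \lambda_n\Gamma_n)$ varies continuously, and the covolumes of the pieces converge) by rational ones, since covolume is a continuous function on the space of cobounded convex regions with respect to the Hausdorff-type topology on the bounded coconvex sets $C\setminus\Gamma$. Because the statement we want to prove is that a continuous function $P(\lambda_1,\dots,\lambda_n)$ on the positive orthant agrees with a homogeneous degree-$n$ polynomial, and the space of such polynomials is finite dimensional and closed, it suffices to establish the identity for a dense family of data; hence we may assume $C = C_R$ is the cone attached to the monomial local ring $R$ of the fixed point of an affine toric variety $X$ of dimension $n$, and each $\Gamma_i = \Gamma_{I_i}$ for an $\m$-primary monomial ideal $I_i \subset R$.

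Next I would invoke the local Bernstein-Kushnirenko theorem (Theorem \ref{th-BK-local} in the paper, stated in the introduction) which gives
\begin{equation} \label{equ-plan-BK}
e(I_1^{k_1}\cdots I_n^{k_n}) = n!\, V(\Gamma_{I_1^{k_1}\cdots I_n^{k_n}}) = n!\, V(k_1\Gamma_{I_1} + \cdots + k_n\Gamma_{I_n}),
\end{equation}
where the last equality uses the elementary fact that $\Gamma_{IJ} = \Gamma_I + \Gamma_J$ for monomial ideals (the Newton region of a product is the Minkowski sum of the Newton regions) together with the definition $V(\Gamma) = \covol(\Gamma)$. By Corollary \ref{cor-multi-polynomial} the left-hand side of \eqref{equ-plan-BK} is a homogeneous polynomial of degree $n$ in $k_1,\dots,k_n$. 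Therefore $\covol(k_1\Gamma_{I_1} + \cdots + k_n\Gamma_{I_n})$ is a homogeneous degree-$n$ polynomial in the \emph{positive integers} $k_i$. A homogeneous function on the positive orthant that restricts to a polynomial on all positive integer points, and which is continuous (covolume is), must agree with that polynomial on all positive reals: indeed, first rational $\lambda_i$ are handled by homogeneity, $\covol(\sum (p_i/q)\Gamma_i) = q^{-n}\covol(\sum p_i\Gamma_i)$, and then continuity extends to all $\lambda_i \ge 0$.

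Finally I would assemble these pieces: for arbitrary cobounded $C$-convex regions $\Gamma_1,\dots,\Gamma_n$ in an arbitrary strictly convex cone $C$, approximate by the toric-rational data above, conclude that each approximant of $P(\lambda_1,\dots,\lambda_n)$ is a homogeneous degree-$n$ polynomial with nonnegative coefficients bounded in terms of the covolumes $\covol(\Gamma_i)$ (which are bounded along the approximating sequence), and pass to the limit in the finite-dimensional space of degree-$n$ homogeneous polynomials to get that $P$ itself is such a polynomial. The main obstacle I expect is the approximation and limiting argument: one must check carefully that covolume really is continuous on cobounded convex regions and that rational cones with rational cobounded regions are dense in the appropriate sense — in particular that when one perturbs the cone $C$ one can still realize the perturbed regions inside it — so that the polynomial identity, valid on a dense set, propagates to the general case. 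The purely algebraic input (local Bernstein-Kushnirenko plus Corollary \ref{cor-multi-polynomial}) is the easy part; the convex-geometric density and continuity bookkeeping is where the real work lies.
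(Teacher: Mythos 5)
Your proposal follows essentially the same approach as the paper: approximate the cone $C$ and the regions $\Gamma_i$ by rational polyhedral data, apply the local Bernstein--Kushnirenko theorem together with Corollary \ref{cor-multi-polynomial} to get polynomiality in the rational toric case, then pass to the limit in the finite-dimensional space of degree-$n$ homogeneous polynomials. The one place you go beyond the paper's text is in spelling out the integer-to-rational-to-real scaling argument (homogeneity plus continuity of covolume) needed to upgrade the polynomial identity from $k_i \in \Z_{>0}$ to $\lambda_i \in \R_{\ge 0}$, together with the fact $\Gamma_{IJ} = \Gamma_I + \Gamma_J$ for monomial ideals; the paper leaves both implicit when it asserts directly that $P^{(j)}$ is a polynomial.
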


Since the covolume is a polynomial in the space of cobounded $C$-convex regions, it can be extended to a multi-linear function. More precisely, let $\mathcal{C}$ denote the collection of all cobounded $C$-convex regions. Then there exists a unique function $V: \mathcal{C}^n \to \r$ such that: (1) $V$ is linear in each argument, (2) for any cobounded $C$-convex region $\Gamma$ we have $V(\Gamma, \ldots, \Gamma) = \covol(\Gamma)$. For $C$-convex regions 
$\Gamma_1, \ldots, \Gamma_n$ we call $V(\Gamma_1, \ldots, \Gamma_n)$ the {\it mixed covolume} of the $\Gamma_i$.

Similar to the mixed volume of convex bodies, the mixed covolume also satisfies an Alexandrov-Fenchel inequality. We give a proof of this inequality in this section, using an analogous inequality proved earlier for mixed multiplicities of ideals (Theorem \ref{th-AF-multi}). 

\begin{Th}[Alexandrov-Fenchel inequality for mixed covolume] \label{th-AF-covolume}
Let $\Gamma_1, \Gamma_2, \ldots, \Gamma_n$ be cobounded $C$-convex regions. Then we have:
\begin{equation} \label{equ-AF-covol}
V(\Gamma_1, \Gamma_1, \Gamma_3, \ldots, \Gamma_n) V(\Gamma_2, \Gamma_2, \Gamma_3, \ldots, \Gamma_n) \geq V(\Gamma_1, \Gamma_2, \ldots, \Gamma_n)^2.
\end{equation}
\end{Th}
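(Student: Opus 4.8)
The plan is to derive the inequality \eqref{equ-AF-covol} from the Alexandrov--Fenchel inequality for mixed multiplicities (Theorem \ref{th-AF-multi}) via the local Bernstein--Kushnirenko theorem (Theorem \ref{th-BK-local}), after a preliminary reduction to rational data.

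First I would reduce to the case in which $C$ is a rational strictly convex cone and $\Gamma_1, \ldots, \Gamma_n$ are \emph{lattice} $C$-convex regions, i.e.\ cobounded $C$-convex regions whose boundary inside $C$ is a polyhedron with all vertices in $\z^n$. This is legitimate for two reasons. On the one hand, both sides of \eqref{equ-AF-covol} are continuous with respect to Hausdorff convergence of the coconvex sets $C \setminus \Gamma_i$ on compact sets: by Theorem \ref{th-covol-poly} the covolume is a homogeneous polynomial of degree $n$, so its polarization $V$ is multilinear and depends continuously on the $\Gamma_i$. On the other hand, a strictly convex cone $C$ can be exhausted from inside by rational strictly convex cones $C' \subseteq C$ (replace the extreme rays of $C$ by nearby rational rays); for such $C'$ the sets $\Gamma_i \cap C'$ are cobounded $C'$-convex regions converging to $\Gamma_i$, and each $\Gamma_i \cap C'$ is approximated from inside by the rational polyhedral $C'$-convex regions $\conv\bigl(\tfrac1N\z^n \cap (\Gamma_i \cap C')\bigr) + C'$. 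Finally, rescaling all the $\Gamma_i$ by a common integer $N$ clears denominators and multiplies both sides of \eqref{equ-AF-covol} by $N^{2n}$, so we may also assume the vertices lie in $\z^n$.

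Next, given a rational strictly convex cone $C$ of dimension $n$, let $R = \mathcal{O}_{X, o}$ be the (localized) monomial algebra whose associated cone is $C$, where $X$ is the corresponding $n$-dimensional affine toric variety and $o$ its torus-fixed point. To each lattice $C$-convex region $\Gamma_i$ associate the monomial ideal $I_i \subset R$ generated by the monomials whose exponents lie in $\Gamma_i \cap \z^n$; then $\Gamma_{I_i} = \Gamma_i$, and $I_i$ is $\m$-primary precisely because $C \setminus \Gamma_i$ is bounded. The local Bernstein--Kushnirenko theorem (Theorem \ref{th-BK-local}) gives, for every choice of indices,
$$e(I_{j_1}, \ldots, I_{j_n}) = n!\, V(\Gamma_{j_1}, \ldots, \Gamma_{j_n}).$$
Substituting these identities into the inequality of Theorem \ref{th-AF-multi} applied to the triples $(I_1, I_1, I_3, \ldots, I_n)$, $(I_2, I_2, I_3, \ldots, I_n)$ and $(I_1, I_2, I_3, \ldots, I_n)$, the common factor $(n!)^2$ cancels on the two sides and we obtain \eqref{equ-AF-covol} for lattice $C$-convex regions. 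Passing to the limit along the approximations of the first step establishes \eqref{equ-AF-covol} in general.

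The step I expect to be the main obstacle is the approximation argument: one must rationalize $C$ and the regions $\Gamma_i$ simultaneously while preserving coboundedness (so that the associated monomial ideals remain $\m$-primary) and while ensuring that all the relevant mixed covolumes converge. Thanks to Theorem \ref{th-covol-poly} and multilinearity of $V$, this ultimately reduces to the continuity of $\covol$ under Hausdorff approximation of coconvex sets, which is elementary but should be stated explicitly. A minor additional point is to match the toric conventions so that $\Gamma_{I_i}$ for the monomial ideal $I_i$ really recovers the prescribed lattice region $\Gamma_i$ (passing, if necessary, to its integral hull, which changes nothing once the vertices are lattice points).
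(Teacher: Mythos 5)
Your proof is correct and follows essentially the same route as the paper: reduce to the case of a rational cone and lattice polyhedral regions by a rescaling/approximation argument, apply the local Bernstein--Kushnirenko theorem (Theorem \ref{th-BK-local}) together with the Alexandrov--Fenchel inequality for mixed multiplicities (Theorem \ref{th-AF-multi}), and conclude by continuity of the mixed covolume. Your treatment of the approximation step (exhausting $C$ by rational subcones and then the $\Gamma_i$ by rational polyhedral regions) is in fact a bit more explicit than the paper's, which leans on the analogous approximation already carried out in the proof of Theorem \ref{th-covol-poly}.
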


To prove Theorems \ref{th-covol-poly} and \ref{th-AF-covolume} 
first we consider the case where $C$ is a rational polyhedral cone. To such a cone there corresponds an affine toric variety $X$ and the local ring
$R$ of its unique torus fixed point. The multiplicities of monomial $\m$-primary ideals in $R$ then give covolumes of 
integral polyhedral convex regions in $C$.

More precisely, let $C$ be a closed strictly convex rational polyhedral cone in $\r^n$ with apex at the origin. Consider the additive semigroup 
$\SS = C \cap \z^n$. Consider the semigroup algebra $\k[\SS]$ of $\SS$, that is, the subalgebra of Laurent polynomials $\k[x_1^{\pm 1}, \ldots, x_n^{\pm 1}]$ consisting of all $f(x) = \sum_{\alpha \in \SS} c_\alpha x^\alpha$, where we have used the shorthand notation $x = (x_1, \ldots, x_n)$, $\alpha = (a_1, \ldots, a_n)$ and 
$x^\alpha = x_1^{a_1} \cdots x_n^{a_n}$. The semigroup algebra $\k[\SS]$ is the coordinate ring of the affine troic variety associated to the cone $C$. Finally, let $R$ be the localization of $\k[\SS]$ at the maximal ideal generated by all the nonconstant monomials. 

Let $\Gamma \subset C$ be a $C$-convex region that is also a polyhedron with integral vertices. Consider the set $\I(\Gamma) = \Gamma \cap \z^n$. It is a semigroup ideal in $\SS = C \cap \z^n$, i.e. if $x \in \I(\Gamma)$ and $y \in \SS$ then $x + y \in \I(\Gamma)$. Let $I(\Gamma)$ denote the ideal in $R$ generated by the $x^\alpha$ for all $\alpha \in \I(\Gamma)$. It is easy to see that $I(\Gamma)$ is an $\m$-primary ideal if and only if $\Gamma$ is a cobounded region. 

The following is the local version of the celebrated Kushnirenko theorem (see \cite{Kushnirenko, AVG} for the smooth case and  \cite{KKh-Buch} for the toric case).
\begin{Th}[Local version of Bernstein-Kushnirenko theorem] \label{th-BK-local}
Let $\Gamma_1, \ldots, \Gamma_n$ be cobounded $C$-convex regions that are polyhedra with integral vertices
and let $I(\Gamma_1), \ldots, I(\Gamma_n)$ be their associated monomial ideals.
Then $$e(I(\Gamma_1), \ldots, I(\Gamma_n)) = n!~V(\Gamma_1, \ldots, \Gamma_n).$$
\end{Th}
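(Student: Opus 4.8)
The plan is to prove the Local Bernstein–Kushnirenko theorem by reducing it, via multi-additivity on both sides, to the computation of the single (non-mixed) multiplicity $e(I(\Gamma))$ for a cobounded integral $C$-convex region $\Gamma$, and then to identify this with $n!\covol(\Gamma)$ through a Hilbert–Samuel asymptotic count of lattice points. Indeed, both sides of the claimed identity are multi-additive: the left side by Theorem~\ref{th-mixed-multi-additive} together with the fact that $I(\Gamma_1)I(\Gamma_2)$ and $I(\Gamma_1+\Gamma_2)$ have the same integral closure (equivalently the same restriction to the toric variety), so they represent the same element of $\K_{ideal}(R)$; and the right side because mixed covolume $V$ is by construction linear in each argument (Theorem~\ref{th-covol-poly} and the paragraph following it). Since the cobounded integral $C$-convex regions span, under the semigroup operation, enough of $\mathcal{C}$, and since a multi-additive function is determined by its values on the diagonal, it suffices to check $e(I(\Gamma)) = n!\,\covol(\Gamma)$ for a single such $\Gamma$.

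First I would set up the diagonal computation. By Definition of Samuel multiplicity, $e(I(\Gamma)) = n!\lim_{k\to\infty} \dim_\k(R/I(\Gamma)^k)/k^n$, and by Theorem~\ref{th-Samuel-vs-ours} this agrees with the geometric mixed multiplicity of Definition~\ref{def-multi-ideals}. For a monomial ideal, $I(\Gamma)^k = I(k\Gamma)$ (as semigroup ideals $\I(k\Gamma) = k\cdot\I(\Gamma)$ up to the usual saturation, which does not affect the integral closure), and $R/I(\Gamma)^k$ has a monomial $\k$-basis indexed by the lattice points of $C\setminus k\Gamma = k(C\setminus\Gamma)$. Hence
$$
\dim_\k(R/I(\Gamma)^k) = \#\big( (C\setminus\Gamma)\cap \tfrac1k\z^n \big)\cdot \text{(scaling)} = \#\big( k(C\setminus\Gamma)\cap \z^n\big).
$$
Since $C\setminus\Gamma$ is a bounded set (coboundedness) whose boundary has measure zero, the standard lattice-point asymptotic gives $\#(k(C\setminus\Gamma)\cap\z^n) = \vol(C\setminus\Gamma)\,k^n + o(k^n) = \covol(\Gamma)\,k^n + o(k^n)$. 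Therefore $e(I(\Gamma)) = n!\,\covol(\Gamma)$, which is exactly the claimed identity on the diagonal.

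**The main obstacle** I expect is the bookkeeping around integral closures and saturation: the monomial ideal $I(\Gamma)$ generated by $\{x^\alpha : \alpha\in\Gamma\cap\z^n\}$ may differ from the ideal whose exponent set is all of $\Gamma\cap\z^n$ as a \emph{saturated} semigroup ideal, and one must check that these differences are invisible to both the multiplicity (they do not change integral closure, hence give the same element of $\K_{ideal}(R)$ by Remark~\ref{rem-int-closed-ideal}) and to the covolume. Likewise, one must verify carefully that $\overline{I(\Gamma_1)I(\Gamma_2)} = \overline{I(\Gamma_1+\Gamma_2)}$; this is the monomial-ideal analogue of the statement that the Newton polyhedron of a product is the Minkowski sum, and it follows because the integral closure of a monomial ideal is the monomial ideal attached to the convex hull of its exponents. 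Once these identifications are in place, the reduction to the diagonal via multi-additivity is formal, and the diagonal case is the elementary lattice-point count above; so the technical heart of the proof is confirming that the passage from polyhedral combinatorics to ideals respects the equivalence relation defining $\K_{ideal}(R)$.
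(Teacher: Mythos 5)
The paper does not actually prove this theorem; it states it and refers the reader to \cite{Kushnirenko, AVG} (smooth case) and \cite{KKh-Buch} (toric case). So there is no internal argument to compare against, and I will evaluate your proposal on its own.

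Your overall structure is sound: the map $\Gamma \mapsto I(\Gamma)$ carries Minkowski sum to ideal product up to integral closure, so by Remark~\ref{rem-int-closed-ideal} it descends to a semigroup homomorphism from cobounded integral polyhedral $C$-convex regions into $\K_{ideal}(R)/\!\sim$; since mixed multiplicity (by Corollary~\ref{cor-multi-int-index} and Theorem~\ref{Groth-homomorphism}) and mixed covolume both factor through this and are symmetric multi-additive, polarization reduces everything to the diagonal. That part is correct.

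The place where you need to be more careful is the diagonal computation. You assert that $R/I(\Gamma)^k$ has a monomial basis indexed by $k(C\setminus\Gamma)\cap\z^n$. This is \emph{not} an equality in general: $I(\Gamma)^k$ is the ideal whose exponents are $k$-fold sums of lattice points of $\Gamma$ plus elements of $\SS$, and this set can be strictly smaller than $k\Gamma\cap\z^n$ (not every lattice point of $k\Gamma$ is such a sum, because a lattice point of $k\Gamma$ projects to a \emph{rational} convex combination of the integral vertices of $\Gamma$). Equivalently, $I(\Gamma)^k \subsetneq \overline{I(\Gamma)^k} = I(k\Gamma)$ may hold even when $\Gamma$ has integral vertices (in dimension $2$ Zariski's theorem on products of integrally closed ideals saves you, but not in higher dimension). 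So the correct statement is only $\dim_\k(R/I(\Gamma)^k) \geq \#\bigl(k(C\setminus\Gamma)\cap\z^n\bigr)$, and you need a matching upper bound with error $o(k^n)$. This is true, but requires an argument: e.g.\ a Brian\c{c}on--Skoda-type inclusion $\overline{I(\Gamma)^{k+c}} \subset I(\Gamma)^k$ for a fixed $c$, which sandwiches $\dim_\k(R/I(\Gamma)^k)$ between the lattice counts for $k\Gamma$ and $(k+c)\Gamma$, both of which are $\covol(\Gamma)\,k^n + O(k^{n-1})$. You gesture at this by saying the saturation ``does not affect the integral closure,'' which is correct for the \emph{multiplicity} but does not by itself justify the claimed identity of \emph{dimensions}; the asymptotic squeeze is the missing step. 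With that lemma supplied, your argument is complete.
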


\begin{proof}[Proof of Theorem \ref{th-covol-poly}]
Let $C^{(j)}$, $j=1, 2, \ldots$  be closed strictly convex rational polyhedral cones in $\r^n$ approximating $C$ arbitrarily closely. Also for each $j$,
let $\Gamma_1^{(j)}, \ldots, \Gamma_n^{(j)}$ be rational polyhedral $C^{(j)}$-convex regions in $\r^n$ approximating respectively $\Gamma_1, \ldots, \Gamma_n$ arbitrarily closely. By Corollary \ref{cor-multi-polynomial} and Theorem \ref{th-BK-local} we know that for each $j$ the function 
$P^{(j)}(\lambda_1, \ldots, \lambda_n) = \covol(\lambda_1 \Gamma^{(j)}_1 + \cdots + \lambda_n \Gamma^{(j)}_n)$ is a homogeneous polynomial of degree $n$ in the $\lambda_i$. But as $j$ goes to infinity the functions $P^{(j)}$ converge to the function $P$ in \eqref{equ-covol-poly}. Since the limit of homogeneous polynomials of degree $n$ is again a homogeneous polynomial of degree $n$ the theorem is proved.
  
\end{proof}

\begin{proof}[Proof of Theorem \ref{th-AF-covolume}]
If $C$ is a rational cone and $\Gamma_1, \ldots, \Gamma_n$ are polyhedral $C$-convex regions with integral vertices, 
the inequality \eqref{equ-AF-covol} follows immediately from Theorem \ref{th-BK-local} and Theorem \ref{th-AF-multi}. From multi-linearity of 
mixed covolume it follows that \eqref{equ-AF-covol} is also true if the $\Gamma_i$ are polyhedral regions with rational vertices.
But any convex region can be approximated arbitrarily closely by polyhedral convex regions with rational vertices. Thus \eqref{equ-AF-covol} 
in general follows by continuity.
\end{proof}

We would like to point out that 
our proof of the (local) Alexandrov-Fenchel inequality for mixed multiplicities relies on the Hodge inequality which is essentially 
the (global) Alexandrov-Fenchel inequality in the intersection theory of subspaces. Similarly, the geometric proof of the Alexandrov-Fenchel 
inequalities for covolumes in \cite{Askold-Vladlen} deduces it from the usual Alexandrov-Fenchel 
inequality for mixed volumes of convex bodies.


\end{document}